\definecolor{RED}{rgb}{1,0,0}\definecolor{BLUE}{rgb}{0,0,1}
\numberwithin{theorem}{section}
\newcommand{\TheTitle}{Higher-Order Total Directional Variation: Analysis} 
\newcommand{\TheAuthors}{S. Parisotto, S. Masnou, and C.-B. Sch{\"{o}}nlieb}
\headers{\TheTitle}{\TheAuthors}
\title{
{\TheTitle}
\thanks{
Submitted to the editors DATE.
\funding{
SP acknowledges UK EPSRC grant EP/L016516/1 for the University of Cambridge, Cambridge Centre for Analysis DTC.  SM acknowledges support from the French National Research Agency (ANR) research grant MIRIAM (ANR-14-CE27- 0019) and the European Union Horizon 2020 research and innovation programme under the Marie Sk\l{}odowska-Curie grant agreement No 777826 (NoMADS).
CBS acknowledges support from
the EPSRC grants Nr. EP/M00483X/1, EP/K009745/1,
the EPSRC centre EP/N014588/1,
the Leverhulme Trust project 'Breaking the non-convexity barrier',
the Alan Turing Institute TU/B/000071, 
the CHiPS (Horizon 2020 RISE project grant), 
the Isaac Newton Institute 
and 
the Cantab Capital Institute for the Mathematics of Information. 
 }
}
}
\author{
Simone Parisotto\thanks{CCA, University of Cambridge, Wilberforce Road,
Cambridge CB3 0WA, UK (\email{sp751@cam.ac.uk})}
\and
Simon Masnou\thanks{Univ Lyon, Universit\'e Claude Bernard Lyon 1, CNRS UMR 5208, Institut Camille Jordan, 69622 Villeurbanne, France (\email{masnou@math.univ-lyon1.fr})} 
\and 
Carola-Bibiane Sch\"onlieb\thanks{DAMTP, University of Cambridge, Wilberforce Road,
Cambridge CB3 0WA, UK (\url{cbs31@cam.ac.uk})}
}
\DeclareMathOperator{\diag}{diag}
\newcommand\widecheck[1]{\savestack{\tmpbox}{\stretchto{\scaleto{\scalerel*[\widthof{\ensuremath{#1}}]{\kern-.6pt\bigwedge\kern-.6pt}{\rule[-\textheight/2]{1ex}{\textheight}}}{\textheight}}{0.5ex}}\stackon[1pt]{#1}{\scalebox{-1}{\tmpbox}}}
\renewcommand\widehat[1]{\savestack{\tmpbox}{\stretchto{\scaleto{\scalerel*[\widthof{\ensuremath{#1}}]{\kern-.6pt\bigwedge\kern-.6pt}{\rule[-\textheight/2]{1ex}{\textheight}}}{\textheight}}{0.5ex}}\stackon[1pt]{#1}{\tmpbox}}
  \let\diag\relax
  \let\div\relax
  \let\dom\relax
  \let\grad\relax
  \let\tr\relax
  \DeclareMathOperator{\diag}{diag}
  \DeclareMathOperator{\div}{div}
  \DeclareMathOperator{\dom}{dom}
  \DeclareMathOperator*{\esssup}{ess\,sup}
  \DeclareMathOperator{\grad}{\bm{\nabla}}
  \DeclareMathOperator{\tr}{trace}
\DeclareMathAlphabet{\pazocal}{OMS}{zplm}{m}{n}
\newcommand{\Mvarcal}{\mathcal{M}}
\newcommand{\Dcal}{\pazocal{D}}
\newcommand{\Ecal}{\pazocal{E}}
\newcommand{\Kcal}{\pazocal{K}}
\newcommand{\Hcal}{\pazocal{H}}
\newcommand{\Lcal}{\pazocal{L}}
\newcommand{\Mcal}{\pazocal{M}}
\newcommand{\Scal}{\pazocal{S}}
\newcommand{\Tcal}{\pazocal{T}}
\newcommand{\Ycal}{\pazocal{Y}}
\newcommand{\Drm}{\mathrm{D}}
\newcommand{\Qrm}{\mathrm{Q}}
\newcommand{\Abold}{\mathbf{A}}
\newcommand{\Ibold}{\mathbf{I}}
\newcommand{\Mbold}{\mathbf{M}}
\newcommand{\Rbold}{\mathbf{R}}
\newcommand{\abold}{\bm{a}}
\newcommand{\ebold}{\bm{e}}
\newcommand{\fbold}{\bm{f}}
\newcommand{\hbold}{\bm{h}}
\newcommand{\ubold}{\bm{u}}
\newcommand{\vbold}{\bm{v}}
\newcommand{\xbold}{\bm{x}}
\newcommand{\ybold}{\bm{y}}
\newcommand{\wbold}{\bm{w}}
\newcommand{\zbold}{\bm{z}}
\newcommand{\alphabold}{{\bm{\alpha}}}
\newcommand{\betabold}{{\bm{\beta}}}
\newcommand{\etabold}{{\bm{\eta}}}
\newcommand{\xibold}{{\bm{\xi}}}
\newcommand{\Phibold}{{\bm{\Phi}}}
\newcommand{\Psibold}{{\bm{\Psi}}}
\newcommand{\Lambdabold}{{\bm{\Lambda}}}
\definecolor{darkred}{rgb}{0.55,0.0,0.0}
\definecolor{darkgreen}{rgb}{0,0.55,0.0}
\definecolor{darkblue}{rgb}{0,0.0,0.55}
\definecolor{RoyalRed}{rgb}{0.6179,0.0236,0.0894} 
\definecolor{deepblue}{rgb}{0,0,0.5}
\definecolor{deepred}{rgb}{0.6,0,0}
\definecolor{deepgreen}{rgb}{0,0.5,0}
\definecolor{grey}{gray}{0.5}
\definecolor{lightgrey}{gray}{0.8}
\definecolor{lightblue}{rgb}{0.4,0.6,0.8}
\definecolor{lightred}{rgb}{0.8,0.6,0.4}
\colorlet{commentcolour}{green!50!black}
\colorlet{stringcolour}{red!60!black}
\colorlet{keywordcolour}{magenta!90!black}
\colorlet{exceptioncolour}{yellow!50!red}
\colorlet{commandcolour}{blue!60!black}
\colorlet{numpycolour}{blue!60!green}
\colorlet{literatecolour}{magenta!90!black}
\colorlet{promptcolour}{green!50!black}
\colorlet{specmethodcolour}{violet}
\colorlet{indendifiercolour}{green!70!white} 
\DeclareDocumentCommand\CCCspace{mo}
 {\IfValueTF{#2}
   {\mathrm{{C}}^{#1}(#2)}
   {\mathrm{{C}}^{#1}}}
 \DeclareDocumentCommand\CCospace{mo}
 {\IfValueTF{#2}
   {\mathrm{{C}}_0^{#1}(#2)}
   {\mathrm{{C}}_0^{#1}}}
 \DeclareDocumentCommand\CCCcomp{mo}
 {\IfValueTF{#2}
   {\mathrm{{C}}_c^{#1}(#2)}
   {\mathrm{{C}}_c^{#1}}}
\DeclareDocumentCommand\WW{mo}
 {\IfValueTF{#2}
   {\mathrm{{W}}^{#1}(#2)}
   {\mathrm{{W}}^{#1}}}
\DeclareDocumentCommand\WWloc{mo}
 {\IfValueTF{#2}
   {\mathrm{{W}}_{\text{loc}}^{#1}(#2)}
   {\mathrm{{W}}_{\text{loc}}^{#1}}}
\DeclareDocumentCommand\WWo{mo}
 {\IfValueTF{#2}
   {\mathrm{{W}}_0^{#1}(#2)}
   {\mathrm{{W}}_0^{#1}}}
\DeclareDocumentCommand\HH{mo}
 {\IfValueTF{#2}
   {\mathrm{{H}}^{#1}(#2)}
   {\mathrm{{H}}^{#1}}}
\DeclareDocumentCommand\HHo{mo}
 {\IfValueTF{#2}
   {\mathrm{{H}}_0^{#1}(#2)}
   {\mathrm{{H}}_0^{#1}}}
\DeclareDocumentCommand\LL{mo}
 {\IfValueTF{#2}
   {\mathrm{{L}}^{#1}(#2)}
   {\mathrm{{L}}^{#1}}}
\DeclareDocumentCommand\LLc{mo}
 {\IfValueTF{#2}
   {\mathrm{{L}}_\mathrm{c}^{#1}(#2)}
   {\mathrm{{L}}_\mathrm{c}^{#1}}}
\DeclareDocumentCommand\LLloc{mo}
 {\IfValueTF{#2}
   {\mathrm{{L}}_{\text{loc}}^{#1}(#2)}
   {\mathrm{{L}}_{\text{loc}}^{#1}}} 
\DeclareDocumentCommand\BH{mo}
 {\IfValueTF{#2}
   {\mathrm{{BH}}^{#1}(#2)}
   {\mathrm{{BH}}^{#1}}}
\DeclareDocumentCommand\BV{mo}
 {\IfValueTF{#2}
   {\mathrm{{BV}}^{#1}(#2)}
   {\mathrm{{BV}}^{#1}}}
\DeclareDocumentCommand\BDV{mmo}
 {\IfValueTF{#3}
   {\mathrm{{BDV}}^{(#1,#2)}(#3)}
   {\mathrm{{BDV}}^{(#1,#2)}}}
\DeclareDocumentCommand\BDD{mo}
 {\IfValueTF{#2}
   {\mathrm{{BDD}}^{#1}(#2)}
   {\mathrm{{BDD}}^{#1}}}
 \DeclareDocumentCommand\BDVM{mmo}
 {\IfValueTF{#3}
   {\mathrm{{BDV}}^{#1}_{#2}(#3)}
   {\mathrm{{BDV}}^{#1}_{#2}}}
\DeclareDocumentCommand\BDVb{mmmo}
 {\IfValueTF{#4}
   {\mathrm{{BDV}}_{#3}^{(#1,#2)}(#4)}
   {\mathrm{{BDV}}_{#3}^{(#1,#2)}}}
\DeclareDocumentCommand\BDVloc{mmo}
 {\IfValueTF{#3}
   {\mathrm{{BDV}}_{\text{loc}}^{(#1,#2)}(#3)}
   {\mathrm{{BDV}}_{\text{loc}}^{(#1,#2)}}} 
\DeclareDocumentCommand\BGDV{mmo}
 {\IfValueTF{#3}
   {\mathrm{{BGDV}}^{(#1,#2)}(#3)}
   {\mathrm{{BGDV}}^{(#1,#2)}}}
\DeclareDocumentCommand\BGV{mo}
 {\IfValueTF{#2}
   {\mathrm{{BGV}}^{#1}(#2)}
   {\mathrm{{BGV}}^{#1}}}
\DeclareDocumentCommand\BD{mo}
 {\IfValueTF{#2}
   {\mathrm{{BD}}^{#1}(#2)}
   {\mathrm{{BD}}^{#1}}} 
\DeclareDocumentCommand\BVloc{mo}
 {\IfValueTF{#2}
   {\mathrm{{BV}}_{\text{loc}}^{#1}(#2)}
   {\mathrm{{BV}}_{\text{loc}}^{#1}}}
\DeclareDocumentCommand\SBV{mo}
 {\IfValueTF{#2}
   {\mathrm{{SBV}}^{#1}(#2)}
   {\mathrm{{SBV}}^{#1}}}
\DeclareDocumentCommand\TV{mo}
 {\IfValueTF{#2}
   {\mathrm{{TV}}^{#1}(#2)}
   {\mathrm{{TV}}^{#1}}}
  \DeclareDocumentCommand\Sym{mo}
 {\IfValueTF{#2}
   {\mathrm{{Sym}}^{#1}(#2)}
   {\mathrm{{Sym}}^{#1}}}
\DeclareDocumentCommand\TGV{mmo}
 {\IfValueTF{#3}
   {\mathrm{{TGV}}_{#1}^{#2}(#3)}
   {\mathrm{{TGV}}_{#1}^{#2}}}
\DeclareDocumentCommand\TGDV{mmmo}
 {\IfValueTF{#4}
   {\mathrm{{TGDV}}^{#1}_{#2, #3}(#4)}
   {\mathrm{{TGDV}}^{#1}_{#2, #3}}} 
 \DeclareDocumentCommand\PTV{mo}
 {\IfValueTF{#2}
   {\mathrm{{PTV}}^{#1}(#2)}
   {\mathrm{{PTV}}^{#1}}}
 \DeclareDocumentCommand\TDV{mmo}
 {\IfValueTF{#3}
   {\mathrm{{TDV}}^{(#1,#2)}({#3})}
   {\mathrm{{TDV}}^{(#1,#2)}}}
  \DeclareDocumentCommand\TDD{mo}
 {\IfValueTF{#2}
   {\mathrm{{TDD}}^{#1}({#2})}
   {\mathrm{{TDD}}^{#1}}}
  \DeclareDocumentCommand\TDVb{mmmo}
 {\IfValueTF{#4}
   {\mathrm{{TDV}}_{{#3}}^{(#1,#2)}({#4})}
   {\mathrm{{TDV}}_{{#3}}^{(#1,#2)}}}
   \DeclareDocumentCommand\TDVM{mmo}
 {\IfValueTF{#3}
   {\mathrm{{TDV}}_{{#2}}^{#1}({#3})}
   {\mathrm{{TDV}}_{{#2}}^{#1}}}
   \DeclareDocumentCommand\DVM{mmo}
 {\IfValueTF{#3}
   {\mathrm{{DV}}_{{#2}}^{#1}({#3})}
   {\mathrm{{DV}}_{{#2}}^{#1}}} 
  \DeclareDocumentCommand\TDVh{mmo}
 {\IfValueTF{#3}
   {\mathrm{{TDV}}_h^{(#1,#2)}(#3)}
   {\mathrm{{TDV}}_h^{(#1,#2)}}}
\DeclareDocumentCommand\DTV{mo}
 {\IfValueTF{#2}
   {\mathrm{{DTV}}^{#1}(#2)}
   {\mathrm{{DTV}}^{#1}}} 
 \DeclareDocumentCommand\DTGV{mmo}
 {\IfValueTF{#3}
   {\mathrm{{DTGV}}^{#1}_{#2}(#3)}
   {\mathrm{{DTGV}}^{#1}_{#2}}} 
 \DeclareDocumentCommand\EXP{mo}
 {\IfValueTF{#2}
   {\EE_{#2}\left[#1\right]}
   {\EE\left[#1\right]}}
\newcommand{\codetitlestyle}[1]{\small\textit{#1}}
\newcommand{\belowtitleskip}{2pt}
\DeclareFixedFont{\ttb}{T1}{txtt}{bx}{n}{12} \DeclareFixedFont{\ttm}{T1}{txtt}{m}{n}{12}  
\lstdefinestyle{pythonstyle}{
language=Python,
basicstyle=\ttfamily\small,
frame=tb,                         }
\lstdefinestyle{matlabstyle}{
language=matlab,
basicstyle=\ttfamily\small,
frame=tb,
rulecolor=\color{black!40},
emphstyle=\color{blue},
commentstyle=\color{commentcolour}\slshape,
}
\lstdefinestyle{matlab}{frame=single,                           basicstyle=\scriptsize\ttfamily,             keywordstyle=[1]\color{darkblue}\bfseries,        keywordstyle=[2]\color{darkred},         keywordstyle=[3]\color{darkred}\underbar,  identifierstyle=,                       commentstyle=\usefont{T1}{pcr}{m}{sl}\color{darkred}\small,
        stringstyle=\color{darkred},             showstringspaces=false,                 tabsize=5,                              morekeywords={xlim,ylim,var,alpha,factorial,poissrnd,normpdf,normcdf},
morekeywords=[2]{on, off, interp},
morekeywords=[3]{FindESS, homework_example},
morecomment=[l][\color{darkred}]{...},     numbers=left,                           firstnumber=1,                          numberstyle=\tiny\color{darkred},          stepnumber=5                            }
\newcommand\pythoninline[1]{{\pythonstyle\lstinline!#1!}}
\newcommand{\RR}{\mathbb{R}}
\newcommand{\NN}{\mathbb{N}}
\newcommand{\EE}{\mathbb{E}}
\newcommand{\diff}{\mathop{}\mathrm{d}}
\newcommand{\at}[2][]{#1|_{#2}}
\newcommand{\weakconv}{\rightharpoonup}
\newcommand{\weakstarconv}{\xrightharpoonup{\ast}}
\newcommand{\abs}[1]{\left\lvert #1 \right\rvert}
\newcommand{\norm}[1]{\left\lVert #1 \right\rVert}
\newcommand{\blank}{\,{\cdot}\,}
   \newcommand{\T}{\mathrm{T}}
\renewcommand{\Im}{\mathrm{Im}} 
\newenvironment{boxedeq*}
  {\empheq[box=\tcbhighmath]{equation*}}
  {\endempheq}
\newenvironment{boxedalign*}
  {\empheq[box=\tcbhighmath]{align*}}
  {\endempheq}
\begin{document}

\maketitle

\begin{abstract}
We analyse a new notion of total anisotropic higher-order variation which, differently from the Total Generalized Variation by Bredies et al., quantifies for possibly non-symmetric tensor fields their variations at arbitrary order weighted by possibly inhomogeneous, smooth elliptic anisotropies. 
We prove some properties of this total variation and of the associated spaces of tensors with finite variations. We show the existence of solutions to a related regularity-fidelity optimisation problem. We also prove a decomposition formula which appears to be helpful for the design of numerical schemes, as shown in a companion paper, where several applications to image processing are studied.

\end{abstract}

\begin{keywords}
  Anisotropic total variation, higher-order total variation, variational model
\end{keywords}

\begin{AMS}
  47A52, 49M30, 49N45, 65J22, 94A08
\end{AMS}

\section{Introduction}
Total variation ($\TV{}$) regularisation is one of the most prominent regularisation approaches, successfully applied in a variety of imaging problems. Indeed, since \cite{ROF},  $\TV{}$ played a crucial role for image denoising, image deblurring, inpainting, magnetic resonance image (MRI) reconstruction and many others, see \cite{ChCaCrNoPo10}. 
Extensions of total variation regularisation are TV-type regularisers that feature higher-order derivatives \cite{ChaEsePar2010, ChanMarquinaMulet2000, Papafitsoros2014, SetzerSteidl, WuTai,BreKunPoc2010} -- in particular accommodating for more complex image structures and countering certain $\TV{}$ artefacts such as staircasing -- as well as $\TV{}$ regularisers that encode directional information -- so as to enhance the quality of the smoothing results along preferred directions -- e.g.\ \cite{bayram2012directional, EADTV, BeBuDr06, Dong2009, Steidl2009, GL10, lenzen2013class, STV, EstSoaBre15, Ehrhardt16, directionaltv}. Very general anisotropies have also been studied, as in \cite{AmaBel94}, where it is shown that a fairly general class of metrics, possibly discontinuous, yields a well-defined notion of first-order anisotropic total variation.

In this paper we consider a new class of TV-type regularisers that we have recently introduced in \cite{ParMasSch18applied} and called total directional variation ($\TDVM{}{}$). These regularisers extend the higher-order $\TV{}$ of \cite{BreKunPoc2010} (the so-called total generalized variation, $\TGV{}{}$, see below), and the directional total generalized variation of \cite{directionaltv} (which promotes smoothness along a single, constant direction), to higher-order $\TV{}$ regularisation with spatially-varying directional smoothing. This is done by means of weighting derivatives with $2$-tensors, see below. In \cite{ParMasSch18applied} we propose the $\TDVM{}{}$ regulariser, discuss its discretisation and numerical solution, and demonstrate its performance on a range of imaging applications such as image denoising, wavelet-based zooming, and digital elevation map (DEM) interpolation with applications to atomic force microscopy (AFM) data. In this paper we give a theoretical analysis of the $\TDVM{}{}$ regulariser in the continuum.

Let $\Omega$ be a bounded Lipschitz domain. 
We address the analysis of the higher-order total directional variation defined for every tensor-valued function $\ubold:\Omega\to\Tcal^\ell(\RR^d)$, with $\Tcal^\ell(\RR^d)$ the vector space of $\ell$-tensors in $\RR^d$ and $\ell\in\NN$, as
\begin{equation}
\TDVM{\Qrm,\ell}{\alphabold}[\ubold,\Mcal]:= \sup\left\{ \int_\Omega \ubold \cdot \div_{\Mcal}^\Qrm \Psibold \diff \xbold, \,\Psibold \in\Ycal_{\Mcal,\alphabold}^{\Qrm,\ell} \right\},
\label{eq: TDV intro def}
\end{equation}
where $\Qrm$ is the order of regularisation, $\Mcal$ is a collection of  weighting fields acting on each derivative order, $\alphabold$ is the vector regularisation parameter and
\begin{equation}
\Ycal_{\Mcal,\alphabold}^{\Qrm,\ell} 
= 
\left\{\Psibold\,:\, \Psibold \in \CCCcomp{\Qrm}[\Omega,\Tcal^
{\ell+\Qrm}(\RR^d)],\, \norm{\div_{\Mcal}^j\Psibold}_\infty\leq \alpha_j,\forall\,j=0,\dots\Qrm-1\right\},
\end{equation}
with $\div^j_\Mcal$ the $\Mcal$-anisotropic divergence operator of order $j$, see \Cref{sec: higher-order tdv,sec: BDV} for the precise definitions.
The higher-order total directional variation extends the classical notion of isotropic total generalized variation to the (smooth) elliptic anisotropic case.

\iffalse
To this end, let $\Omega\subset\RR^d$ be a bounded Lipschitz domain and $\ubold:\Omega\to\Tcal^\ell(\RR^d)$ be a tensor-valued function with $\Tcal^\ell(\RR^d)$ is the vector space of $\ell$-tensors in $\RR^d$, $\ell\geq 0$, then we investigate the higher-order total directional variation regularisation term:
\begin{equation}
\TDVM{\Qrm,\ell}{\alphabold}[\ubold,\Mcal]:= \sup_{\Psibold}\left\{ \int_\Omega \ubold \cdot \div_{\Mcal}^\Qrm \Psibold \diff \xbold 
\,\Big\lvert\,
\text{ for all } \Psibold \in\Ycal_{\Mcal,\alphabold}^{\Qrm,\ell} \right\},
\label{eq: TDV intro def}
\end{equation}
where $\Qrm$ is the order of regularisation, $\Mcal$ is a collection of  weighting fields acting at each derivative order, $\alphabold$ is a vector regularisation parameter and
\begin{equation}
\Ycal_{\Mcal,\alphabold}^{\Qrm,\ell} 
= 
\left\{\Psibold \in \CCCcomp{\Qrm}[\Omega,\Tcal^
{\Qrm+\ell}(\RR^d)],\, \norm{\div_{\Mcal}^j\Psibold}_\infty\leq \alpha_j,\forall\,j=0,\dots\Qrm-1\right\}.
\end{equation}
We will provide the rigorous definition in \cref{sec: higher-order tdv}: we comment for now that the regulariser in \cref{eq: TDV intro def} is designed for introducing weighted derivatives into the classic definition of total generalized variation \cite{BreKunPoc2010}. 
Thus, the anisotropy introduced by the collection of weighting fields $\Mcal$ is reflected by a suitable weighted divergence $\div_\Mcal^\Qrm$ of order $\Qrm$.
\fi

\subsection{Related works}
The use of modified total variation regularisers in imaging processing has increased in the last decades, with the aim to enhance the local information in images.
We refer to the introduction of the complementary part of this work \cite{ParMasSch18applied} for a detailed review.

For our purposes it is useful to recall the \emph{total generalized variation} \cite{BreKunPoc2010,BreHol2014,Bredies2012} which appears in many image processing tasks. It is defined for a derivative order $\Qrm\geq 1$ as:
\begin{equation}
\TGV{\alphabold}{\Qrm,\ell}[\ubold] 
=\sup\left\{ \int_\Omega \ubold \cdot\div^\Qrm \Psibold \diff\xbold\,\Big\lvert\,
\begin{aligned}
&\Psibold\in\CCCcomp{\Qrm}[\Omega,\Sym{\ell+\Qrm}(\RR^d)],
\\
&\norm{\div^j \Psibold}_\infty\leq \alpha_j, \,\forall\, j=0,\dots,\Qrm-1
\end{aligned}\right\},
\label{eq: TDV intro theory}
\end{equation}
where $\Sym{\ell+\Qrm}(\RR^d)$ is the space of symmetric tensors,  $\alphabold=(\alpha_0,\dots,\alpha_{\Qrm-1})$ is a weight vector of positive real numbers, $\div\Psibold = \tr(\grad \otimes \Psibold)$ and $\div^j\Psibold = \tr^j(\grad^j \otimes \Psibold)$, \cite[Equation (2.1)]{BreKunPoc2010}.

In \cite{directionaltv}, the directional version of \cref{eq: TDV intro theory} is presented for a fixed and single global direction only and for an imaging function $u:\Omega\to\RR$: there, the continuous \emph{directional total variation} ($\DTV{}$) and \emph{directional total generalized variation} ($\DTGV{}{}$) are defined as:
\begin{align}
\DTV{}[u] 
&= 
\sup \left\{\int_\Omega u\div\widetilde{\Psibold}\diff \xbold\,\,\, \Psibold\in\CCCcomp{1}[\Omega,\RR^2],\, \widetilde{\Psibold}(\xbold)\in E^{a,\theta}(\bm{0}),\forall \xbold\in\Omega\right\},
\label{eq: DTV}
\\
\DTGV{\Qrm}{\alphabold}[u] 
&=
\sup\left\{ \int_\Omega u\div^\Qrm\widetilde{\Psibold}\diff \xbold\,\Big\lvert\, 
\begin{aligned}
&\Psibold \in \CCCcomp{\Qrm}[\Omega,\Sym{\Qrm}(\RR^2)],\, \widetilde{\Psibold}(\xbold)\in E^{a,\theta}(\bm{0}).\\
&\norm{\div^q{\widetilde{\Psibold}}}_\infty\leq \alpha_q,\,\forall\, q=0,\dots,\Qrm-1
\end{aligned}
\right\},
\label{eq: DTGV}
\end{align}
where $\widetilde{\Psibold}(\xbold) = \Rbold_\theta\Lambdabold_a\Psibold(\xbold)$ for $\Psibold\in B_1(\bm{0})$ and $a\in(0,1]$, with $\Rbold_\theta$ a rotation matrix and $\Lambdabold_a = \diag(1, a)$ a contraction matrix, and $\widetilde{\Psibold}(\xbold)\in E^{a,\theta}(\bm{0})$ where $E^{a,\theta}(\bm{0})$ is the closed elliptical set defined as $E^{a,\theta}(\bm{0})=\{\xbold\in\RR^2:\xbold^\T \Rbold_\theta \Lambdabold^2_{1/a}\Rbold_\theta^\T \xbold\leq 1\}$.

\subsection{Motivation of the paper}
We are interested in the analysis of the regulariser proposed in \cite{ParMasSch18applied} that generalises \eqref{eq: DTV}-\eqref{eq: DTGV} for handling non constant smoothing directions in the domain $\Omega\subset\RR^2$.  
In particular, we study the total directional variation $\TDVM{\Qrm,\ell}{\alphabold}[\ubold,\Mcal]$ (for a fixed order $\Qrm$ and a collection of weighting fields $\Mcal$) of a $\Tcal^\ell(\RR^d)$-valued function $u$. We analyse the space $\BDVM{\Qrm}{}$ of $\Tcal^\ell(\RR^d)$-valued functions whose total directional variation $\TDVM{\Qrm,\ell}{\alphabold}[\ubold,\Mcal]$ is finite. We exhibit an equivalent representation for $\TDVM{\Qrm,\ell}{\alphabold}[\ubold,\Mcal]$, and we prove the existence of solutions to the $\TDVM{\Qrm,\ell}{\alphabold}-\LL{2}$ problem.

We show that the theoretical results for $\TGV{\alphabold}{\Qrm,\ell}$, shown in \cite{BreKunPoc2010,Bredies2012,BreHol2014} for symmetric tensor fields and  isotropic  derivative operators naturally  extend to the case of possibly non-symmetric tensor fields and elliptic anisotropic derivative operators. A key for this extension is provided by Lemmas~\ref{lem: move M} and~\ref{lem: divergence modified}, and by Definition~\ref{eq: weighted div operator} which gives a suitable notion of weighted divergence for possibly non-symmetric tensor fields.

\subsection{Organization of the paper}
The paper is organized as follows: we introduce the preliminary notation in \cref{sec: preliminaries} and the higher-order total directional variation regularisers in \cref{sec: higher-order tdv}; in \cref{sec: BDV} we discuss the space of functions of bounded directional variation; in \cref{sec: equivalent} we show the equivalent decomposition of $\TDVM{\Qrm}{\alphabold}$, with respect to a collection of fields $\Mcal$ and in \cref{sec: existence} we prove the existence of solutions for the $\TDVM{\Qrm}{\alphabold}-\LL{2}$ problem. 
\section{Preliminaries}\label{sec: preliminaries}
In this section we introduce the notation of tensors and function spaces considered for the definition and analysis of $\TDVM{}{}$.

\subsection{Tensors}
Following \cite{BreKunPoc2010}, let $\Tcal^\ell(\RR^d)$ and $\Sym{\ell}[\RR^d]$ be the vector spaces of $\ell$-tensor and symmetric $\ell$-tensors in $\RR^d$, respectively defined as
\begin{align*}
\Tcal^\ell(\RR^d) 
&:=
\left\{\xibold:\underbrace{\RR^d\times \dots \times \RR^d}_{\ell\text{-times}}\to\RR,\text{ such that } \xibold \text{ is } \ell\text{-linear}\right\},
\\
\Sym{\ell}[\RR^d] &:=
\left\{ \xibold:
\underbrace{\RR^d \times \dots \times \RR^d}_{\ell\text{-times}}\to\RR,\text{ such that } \xibold \text{ is } \ell\text{-linear and symmetric} \right\},
\end{align*}
where $\xibold\in\Tcal^\ell(\RR^d)$ is symmetric if
$
\xibold(\abold_1,\dots,\abold_\ell) = \xibold(\abold_{\pi(1)},\dots,\abold_{\pi(\ell)})
$
for all permutations $\pi$ of $\{1,\dots,\ell\}$. By convention, $\Tcal^0(\RR^d)=\Sym{0}[\RR^d]$ is identified with $\RR$, and every element of $\Tcal^1(\RR^d)=\Sym{1}[\RR^d]$ can be identified with a vector of $\RR^d$ acting on $\RR^d$ through the scalar product. We have $\Tcal^\ell(\RR^d) \equiv \Sym{\ell}[\RR^d]$ only for $\ell=0,1$. For example, $\Tcal^2(\RR^d)$ can be identified with the space of general $d\times d$ real matrices, whereas $\Sym{2}(\RR^d)$ can be identified with the space of symmetric $d\times d$ real matrices.
We have the following operations on $\Tcal^\ell(\RR^d)$ (assuming that $a_k\in\RR^d$, $\forall k$):
\begin{itemize}
\item $\otimes$ is the tensor product for $\xibold_1\in\Tcal^{\ell_1}(\RR^d)$, $\xibold_2\in\Tcal^{\ell_2}(\RR^d)$, with $\xibold_1\otimes\xibold_2\in\Tcal^{\ell_1+\ell_2}(\RR^d)$:
\[
(\xibold_1\otimes\xibold_2)(\abold_1,\dots,\abold_{\ell_1 + \ell_2}) = \xibold_1(\abold_1,\dots,\abold_{\ell_1})\xibold_2(\abold_{\ell_1+1},\dots,\abold_{\ell_1+\ell_2});
\]
\item $\tr(\xibold)\in\Tcal^{\ell-2}(\RR^d)$ is the trace of $\xibold\in\Tcal^\ell(\RR^d)$, with $\ell\geq 2$, defined by
\[
\tr(\xibold)(\abold_1,\dots,\abold_{\ell-2}) = \sum_{i=1}^d \xibold(\ebold_i,\abold_1,\dots,\abold_{\ell-2},\ebold_i),
\]
where $\ebold_i$ is the $i$-th standard basis vector;
\item $(\blank)^\sim$ is the operator such that if $\xibold\in\Tcal^\ell(\RR^d)$, then
\[
\xibold^\sim(\abold_1,\dots \abold_\ell) = \xibold(\abold_\ell, \abold_1,\dots,\abold_{\ell-1});
\]
\item $\overline{(\blank)}$ is the operator such that if $\xibold\in\Tcal^\ell(\RR^d)$, then
\[
\overline{\xibold}(\abold_1,\dots \abold_\ell) = \xibold(\abold_\ell,\dots,\abold_1);
\]
\item let $\xibold,\etabold\in\Tcal^\ell(\RR^d)$. Then $\Tcal^\ell(\RR^d)$ is equipped with the scalar product defined as
\[
\xibold\cdot\etabold = \sum_{p\in\{1,\dots,d\}^\ell} \xibold(\ebold_{p_1},\dots,\ebold_{p_\ell}) \etabold(\ebold_{p_1},\dots,\ebold_{p_\ell});
\]
\item a Frobenius-type norm for $\xibold\in\Tcal^\ell(\RR^d)$ is given by $\abs{\xibold} = \sqrt{\xibold \cdot\xibold}$.
\end{itemize}

\subsection{Spaces}
Let $\Omega\subset\RR^d$ be a fixed open domain. We define the Lebesgue spaces of $\Tcal^\ell(\RR^d)$-valued tensor fields as
\[
\LL{p}[\Omega,\Tcal^\ell(\RR^d)]
=
\left\{
\xibold:\Omega\to\Tcal^\ell(\RR^d)\text{ is measurable and } \norm{\xibold}_p < \infty
\right\},
\]
with
\[
\norm{\xibold}_p 
=
\left(
\int_\Omega
\abs{\xibold(\xbold)}^p \diff\xbold
\right)^{\frac{1}{p}}
\text{ for } 1\leq p < \infty
\quad
\text{and}
\quad
\norm{\xibold}_\infty = \esssup_{\xbold\in\Omega}
\abs{\xibold(\xbold)}.
\]
Also $\LLloc{p}[\Omega,\Tcal^\ell(\RR^d)]$ is defined as usual: since the vector norm in $\Tcal^\ell(\RR^d)$ is a scalar product, then the duality holds: $\LL{p}[\Omega,\Tcal^\ell(\RR^d)]^\ast=\LL{p^\ast}[\Omega,\Tcal^\ell(\RR^d)]$, with $1/p+1/p^\ast = 1$ for $1\leq p < \infty$.

We now introduce the derivative for tensors and its weighted version. In the next, the elements of $\xibold:\Omega\to\Tcal^\ell(\RR^d)$ are described via the shortened notation
\[
(\xibold(\blank))_{i_1,\dots,i_\ell} := \xibold(\blank)(\ebold_{i_1},\dots,\ebold_{i_\ell}).
\]
\begin{definition}\label{def: derivative tensors}
Let $\grad=(\partial_1,\dots, \partial_d)^\T$ be the derivative operator and $\xibold:\Omega\to\Tcal^\ell(\RR^d)$ a differentiable tensor-valued function.
The $\Qrm$-th order (unweighted) derivative of $\xibold$ is defined as $(\grad^\Qrm \otimes \xibold):\Omega\to\Tcal^{\ell+\Qrm}(\RR^d)$ with
\[
\left( (\grad^\Qrm\otimes \xibold) (\blank)\right)_{j_1,\dots,j_\Qrm,i_1,\dots, i_\ell}
=
\left((\Drm^\Qrm\xibold(\blank))_{j_1,\dots,j_\Qrm}\right)_{i_1,\dots, i_\ell},
\]
where $\Drm^\Qrm\xibold:\Omega\to\Lcal^\Qrm(\RR^d,\Tcal^\Qrm(\RR^d))$ denotes the Fr\'{e}chet derivative of $\xibold$ and $\Lcal^\Qrm(\RR^d,\Tcal^\Qrm(\RR^d))$ the space of $\Qrm$-linear and continuous mappings from $\RR^\Qrm$ onto $\Tcal^\ell(\RR^d)$.
\end{definition}
\begin{definition}\label{def: weighted derivative tensors}
Let $\xibold, \grad, \Drm$ be as above and $\etabold:\Omega\to\Tcal^2(\RR^d)$. For $\Qrm=1$, the derivative operator weighted by $\etabold$ is defined as:
\[
\etabold\grad=\left(\sum_{k=1}^d \eta_{j,k}\partial_k\right)_{j=1}^d
\] 
and the first order derivative of $\xibold$ weighted by $\etabold$ is defined as $(\etabold\grad\otimes \xibold):\Omega\to\Tcal^{\ell+1}(\RR^d)$, with
\[
\left((\etabold\grad\otimes \xibold)(\blank)\right)_{j,i_1,\dots,i_\ell} = 
\left(
\left(
\etabold \Drm \xibold(\blank)
\right)_j
\right)_{i_1,\dots, i_\ell}.
\]
For the $\Qrm$-th order case, i.e.\ whenever each derivative order is weighted by the corresponding element of a collection $(\etabold^q)_{q=1}^\Qrm$, with each $\etabold^q:\Omega\to\Tcal^2(\RR^d)$, then
\[
\left((\etabold^\Qrm\grad\otimes\dots\otimes\etabold^1\grad\otimes \xibold)(\blank)\right)_{j_1,\dots,j_\Qrm,i_1,\dots,i_\ell} = 
\left(
\left(
\etabold^\Qrm \Drm
\left(\dots
\left(\etabold^1 \Drm \xibold(\blank)\right)\right)
\right)_{j_1,\dots,j_\Qrm}
\right)_{i_1,\dots, i_\ell}.
\]
\end{definition}

We denote the Banach space of $\Qrm$-times continuously differentiable $\Tcal^\ell(\RR^d)$-valued tensor fields as $\CCCspace{\Qrm}[\Omega,\Tcal^\ell(\RR^d)]$,
where $(\grad^\Qrm\otimes \ubold): \Omega\to\Tcal^{\Qrm+\ell}(\RR^d)$ and
\[
\norm{\ubold}_{\infty,\Qrm}
=
\max_{\ell=0,\dots,\Qrm} 
\sup_{\xbold\in\Omega}
\abs{\grad^\ell\otimes \ubold(\xbold)}.
\]
The space of fields in $\CCCspace{\Qrm}[\Omega,\Tcal^\ell(\RR^d)]$ with compact support  is denoted by $\CCCcomp{\Qrm}[\Omega,\Tcal^\ell(\RR^d)]$ and its completion under the supremum norm by $\CCospace{\Qrm}[\Omega,\Tcal^\ell(\RR^d)]$.
The space of Radon measures on $\Omega\subset\RR^d$ is denoted  by $\Mvarcal$ and, by Riesz representation  theorem, we identify:
$$\Mvarcal(\Omega,\Tcal^\ell(\RR^d))\equiv\CCospace{}[\Omega,\Tcal^\ell(\RR^d)]^\ast,$$
and we have \[\norm{\blank}_\Mvarcal =
\sup
\left\{
\langle \blank,\,\Psibold\rangle\,\Big\lvert\, \Psibold\in\CCospace{}[\Omega,\Tcal^\ell(\RR^d)],\,\norm{\Psibold}_\infty\leq 1
\right\}.
\]
$\Dcal'(\Omega,\Tcal^\ell(\RR^d))$ denotes the space of $\Tcal^\ell(\RR^d)$-valued distributions on $\Omega$ and $\Dcal(\Omega,\Tcal^\ell(\RR^d))=\CCCcomp{\infty}[\Omega,\Tcal^\ell(\RR^d)]$  is the associated space of test functions.
\iffalse
 vector space of distributions defined as the algebraic dual space of $\Dcal(\Omega,\Tcal^\ell(\RR^d))$, i.e.\ a distribution $T$ is a map $T : \Dcal(\Omega,\Tcal^\ell(\RR^d)) \to \RR$ such that
\[
T(c_1 \Psibold_1 + c_2 \Psibold_2 ) = c_1 T(\Psibold_1) + c_2 (\Psibold_2),\quad\text{ for all }\Psibold_1,\Psibold_2\in\Dcal(\Omega)\text{ and all }c_1,c_2\in\RR.
\]
Moreover, for a distribution $\ubold\in\Dcal'(\Omega,\Tcal^\ell(\RR^d))$ we get $\ubold\in\Mvarcal(\Omega,\Tcal^\ell(\RR^d))$ if and only if the sup in the Radon norm above is finite. In that case $\ubold\in\CCospace{}[\Omega,\Tcal^\ell(\RR^d)]$ by density.
\fi

\subsection{Notation}
In what follows, we deal with derivatives of order up to $\Qrm\in\NN^\ast$. 
Since the weighting of each derivative order is the core operation of this work, we make use of a collection of smooth weighting tensor fields $\Mcal=(\Mbold_j)_{j=1}^\Qrm$, where for all $j\in\{1,\dots, \Qrm\}$, $\Mbold_j:\Omega\to\Tcal^2(\RR^d)$ and $\forall \xbold\in\Omega$, $\Mbold_j(\xbold)$ can be identified with a positive definite $d\times d$ matrix. When $\Qrm=1$ or when only one derivative is involved, we will refer directly to a unique weighting tensor field $\Mbold$. 
\section{Higher-order total directional variation}\label{sec: higher-order tdv}
For making sense of the distributional formulation of higher-order directional variation in \cref{eq: TDV intro def} we need an integration by parts formula for the weighted derivative of tensors in \cref{def: weighted derivative tensors}. Namely we consider
\[
\int_\Omega (\Mbold\grad\otimes \Abold)\cdot\Psibold \diff \xbold,
\] 
with $\Omega\subset\RR^d$  a bounded Lipschitz domain, $\Mbold\in\CCCspace{1}[\Omega,\Tcal^2(\RR^d)]$, $\Abold\in\CCCspace{1}[\Omega,\Tcal^\ell(\RR^d)]$ and $\Psibold\in\CCCspace{1}[\Omega,\Tcal^{\ell+1}(\RR^d)]$.
We immediately explore the transfer of $\Mbold$ on $\Psibold$:
\begin{lemma}\label{lem: move M}
Let $\Omega$, $\Mbold$, $\Abold$ and $\Psibold$ as above. Then:
\begin{equation}
\int_\Omega (\Mbold\grad\otimes\Abold)\cdot\Psibold \diff\xbold =\int_\Omega (\grad\otimes \Abold)\cdot \tr\left(\Mbold\otimes\Psibold^\sim\right)\diff\xbold,\,\text{ for all }\Mbold,\,\Abold,\,\Psibold.
\end{equation}
\end{lemma}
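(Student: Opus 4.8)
The plan is to observe that this is a \emph{pointwise} algebraic identity between the two integrands, so that no integration by parts is actually required at this stage: it suffices to show that $(\Mbold\grad\otimes\Abold)\cdot\Psibold = (\grad\otimes\Abold)\cdot\tr(\Mbold\otimes\Psibold^\sim)$ holds at every $\xbold\in\Omega$, after which integrating over $\Omega$ yields the claim. I would therefore fix $\xbold$ and expand both sides in the standard basis, using the component convention $(\xibold)_{i_1,\dots,i_k}=\xibold(\ebold_{i_1},\dots,\ebold_{i_k})$ together with the definitions of $\otimes$, $\tr$, $(\blank)^\sim$, the scalar product, and the weighted derivative of \Cref{def: weighted derivative tensors}.

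For the left-hand side, \Cref{def: weighted derivative tensors} gives $\bigl((\Mbold\grad\otimes\Abold)\bigr)_{j,i_1,\dots,i_\ell}=\sum_{k=1}^d M_{j,k}\,\partial_k A_{i_1,\dots,i_\ell}$, so that, pairing with $\Psibold\in\Tcal^{\ell+1}(\RR^d)$ through the scalar product,
\[
(\Mbold\grad\otimes\Abold)\cdot\Psibold
=\sum_{j,i_1,\dots,i_\ell}\ \sum_{k=1}^d M_{j,k}\,(\partial_k A_{i_1,\dots,i_\ell})\,\Psi_{j,i_1,\dots,i_\ell}.
\]
For the right-hand side I would first record that $(\blank)^\sim$ cyclically moves the last argument to the front, so that $\Psibold^\sim(\abold_1,\dots,\abold_{\ell+1})=\Psibold(\abold_{\ell+1},\abold_1,\dots,\abold_\ell)$; forming $\Mbold\otimes\Psibold^\sim\in\Tcal^{\ell+3}(\RR^d)$ and contracting its first and last slots as prescribed by $\tr$ yields
\[
\bigl(\tr(\Mbold\otimes\Psibold^\sim)\bigr)_{m,i_1,\dots,i_\ell}
=\sum_{j=1}^d M_{j,m}\,\Psi_{j,i_1,\dots,i_\ell}.
\]
Pairing this with $(\grad\otimes\Abold)_{m,i_1,\dots,i_\ell}=\partial_m A_{i_1,\dots,i_\ell}$ and relabelling the dummy gradient index $m$ as $k$ reproduces the expression obtained for the left-hand side, establishing the pointwise equality and hence the lemma.

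The one place demanding care — and the main obstacle — is the index bookkeeping in $\tr(\Mbold\otimes\Psibold^\sim)$: I must verify that the $(\blank)^\sim$ operation routes the gradient slot of $\Psibold$ (its first index) into the slot that the trace contracts against the first index of $\Mbold$, so that the surviving free index of $\Mbold$ lands exactly on the differentiation direction. This is precisely why $\Psibold^\sim$, rather than $\Psibold$ itself, appears: using $\tr(\Mbold\otimes\Psibold)$ would instead contract the \emph{last} slot of $\Psibold$, producing the wrong pairing $M_{j,m}\,\Psi_{i_1,\dots,i_\ell,j}$. Once the cyclic shift is tracked correctly the two contraction patterns coincide termwise, and because every step is an identity of continuous integrands on the bounded domain $\Omega$, integration gives the stated equality for all admissible $\Mbold,\Abold,\Psibold$.
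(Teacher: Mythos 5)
Your proposal is correct and follows essentially the same route as the paper: both establish the pointwise identity $(\Mbold\grad\otimes\Abold)\cdot\Psibold=(\grad\otimes\Abold)\cdot\tr(\Mbold\otimes\Psibold^\sim)$ by expanding in components (the paper in Einstein notation, you with explicit sums) and tracking how $(\blank)^\sim$ and $\tr$ route the indices so that $\bigl(\tr(\Mbold\otimes\Psibold^\sim)\bigr)_{k,i_1,\dots,i_\ell}=\sum_j M_{j,k}\Psi_{j,i_1,\dots,i_\ell}$, then integrate. Your added remark on why $\Psibold^\sim$ rather than $\Psibold$ is needed is accurate and consistent with the paper's definitions.
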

\begin{proof}
Using Einstein notation, we have:
\[
\begin{aligned}
(\Mbold\grad \otimes\Abold)\cdot\Psibold  
&= 
\Mbold_{j,k} \partial_k \Abold_{i_1,\dots, i_\ell} 
\Psibold_{j,i_1,\dots,i_\ell}\\
&= 
\partial_k \Abold_{i_1,\dots, i_\ell}
\Mbold_{j,k} \Psibold_{j,i_1,\dots,i_\ell}
\\
&= 
\left(\grad\otimes\Abold\right)_{k,i_1,\dots, i_\ell}
\Mbold_{j,k} (\Psibold^\sim)_{i_1,\dots,i_{\ell},j}\\
&= 
\left(\grad\otimes\Abold\right)_{k,i_1,\dots, i_\ell}
(\Mbold\otimes\Psibold^\sim)_{j,k,i_1,\dots,i_\ell,j}\\
&= 
\left(\grad\otimes\Abold\right)_{k,i_1,\dots, i_\ell}
\left(\tr(\Mbold\otimes\Psibold^\sim)\right)_{k,i_{1},\dots,i_{\ell}}
\\
&= 
(\grad\otimes \Abold)\cdot \tr(\Mbold\otimes\Psibold^\sim).
 \end{aligned}
 \]
Therefore we get
\[
\int_\Omega (\Mbold\grad\otimes\Abold)\cdot\Psibold \diff\xbold =\int_\Omega (\grad\otimes \Abold)\cdot \tr\left(\Mbold\otimes\Psibold^\sim\right)\diff\xbold.
\]
\end{proof}

\begin{definition}
Let $\Mbold$ and $\Psibold$ as above. We define the $\Mbold$--divergence of $\Psibold$ as:
\begin{equation}
\div_\Mbold (\Psibold) = \tr\left(\grad\otimes \left[\tr(\Mbold\otimes\Psibold^\sim)\right]^\sim\right).
\label{eq: weighted div operator}
\end{equation}
\end{definition}

\begin{remark}
For $\Mbold=\Ibold$ the divergence in \cref{eq: weighted div operator} is
$
\div(\Psibold) = \tr(\grad\otimes\Psibold^\sim).
$
When $\Psibold$ is a symmetric tensor, since $\Psibold^\sim=\Psibold$, we retrieve
$
\div(\Psibold) = \tr(\grad\otimes{\Psibold})
$
of \cite[Equation (2.1)]{BreKunPoc2010}.
\end{remark}

The next lemma provides an integration by parts formula which justifies the definition of the $\Mbold$--divergence operator.
\begin{lemma}\label{lem: divergence modified}
Let $\Omega$, $\Mbold$, $\Abold$ and $\Psibold$ as above. Then:
\begin{equation}
\int_\Omega (\Mbold\grad\otimes \Abold)\cdot\Psibold \diff \xbold 
= 
\int_{\partial \Omega}
(\nu\otimes\Abold)
\cdot 
\tr(\Mbold\otimes\Psibold^\sim)
\diff \Hcal^{d-1}
-\int_\Omega \Abold \cdot \div_{\Mbold}\Psibold\diff\xbold,
\label{eq: integration by parts}
\end{equation}
where $\nu$ is the outward unit normal on $\partial \Omega$ and $
\div_{\Mbold}\Psibold$ as in \cref{eq: weighted div operator}.
\end{lemma}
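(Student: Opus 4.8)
The plan is to reduce the weighted integration by parts to an unweighted one and then invoke the classical divergence theorem. First I would apply \cref{lem: move M} to transfer the weight $\Mbold$ from $\Abold$ onto $\Psibold$, writing
\[
\int_\Omega (\Mbold\grad\otimes\Abold)\cdot\Psibold\diff\xbold = \int_\Omega (\grad\otimes\Abold)\cdot\Phibold\diff\xbold, \qquad \Phibold := \tr(\Mbold\otimes\Psibold^\sim)\in\CCCspace{1}[\Omega,\Tcal^{\ell+1}(\RR^d)].
\]
This turns the statement into a standard (unweighted) integration by parts for the $\Tcal^{\ell+1}(\RR^d)$-valued field $\Phibold$, which is of class $C^1$ up to the boundary precisely because $\Mbold$ and $\Psibold$ are.

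Next I would work in Einstein notation and apply the Leibniz rule componentwise. Writing $(\grad\otimes\Abold)\cdot\Phibold = \partial_k\Abold_{i_1,\dots,i_\ell}\,\Phibold_{k,i_1,\dots,i_\ell}$, I would split
\[
\partial_k\Abold_{i_1,\dots,i_\ell}\,\Phibold_{k,i_1,\dots,i_\ell} = \partial_k\big(\Abold_{i_1,\dots,i_\ell}\,\Phibold_{k,i_1,\dots,i_\ell}\big) - \Abold_{i_1,\dots,i_\ell}\,\partial_k\Phibold_{k,i_1,\dots,i_\ell}.
\]
For each fixed multi-index $(i_1,\dots,i_\ell)$ the first term is the divergence of the vector field $k\mapsto\Abold_{i_1,\dots,i_\ell}\Phibold_{k,i_1,\dots,i_\ell}$, so the Gauss--Green theorem on the bounded Lipschitz domain $\Omega$ converts its integral into $\int_{\partial\Omega}\nu_k\Abold_{i_1,\dots,i_\ell}\Phibold_{k,i_1,\dots,i_\ell}\diff\Hcal^{d-1}$. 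Summing over all indices and recognising $\nu_k\Abold_{i_1,\dots,i_\ell} = (\nu\otimes\Abold)_{k,i_1,\dots,i_\ell}$ yields exactly the boundary term $\int_{\partial\Omega}(\nu\otimes\Abold)\cdot\tr(\Mbold\otimes\Psibold^\sim)\diff\Hcal^{d-1}$.

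It then remains to identify the interior term $\int_\Omega \Abold_{i_1,\dots,i_\ell}\partial_k\Phibold_{k,i_1,\dots,i_\ell}\diff\xbold$ with $\int_\Omega\Abold\cdot\div_\Mbold\Psibold\diff\xbold$. Here I would unwind the definition \cref{eq: weighted div operator}: using that $(\Phibold^\sim)_{j_1,\dots,j_{\ell+1}} = \Phibold_{j_{\ell+1},j_1,\dots,j_\ell}$ together with the fact that $\tr$ contracts the first and last slots, a direct index computation gives $\big(\tr(\grad\otimes\Phibold^\sim)\big)_{j_1,\dots,j_\ell} = \sum_m\partial_m\Phibold_{m,j_1,\dots,j_\ell}$. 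Consequently $\Abold\cdot\tr(\grad\otimes\Phibold^\sim) = \Abold_{i_1,\dots,i_\ell}\partial_k\Phibold_{k,i_1,\dots,i_\ell}$, and since $\div_\Mbold\Psibold = \tr(\grad\otimes\Phibold^\sim)$ by \cref{eq: weighted div operator}, collecting the three contributions delivers the claimed formula.

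The computation carries no serious analytic obstacle: the regularity hypotheses on $\Mbold$, $\Abold$ and $\Psibold$ are exactly what is needed for the divergence theorem on a Lipschitz domain, and the Leibniz step is elementary. The only point requiring care—and the place where a sign or an index slip is most likely—is the bookkeeping in the last step, namely verifying that the two nested applications of the $(\blank)^\sim$ operator (one inside $\Phibold = \tr(\Mbold\otimes\Psibold^\sim)$, one in $\tr(\grad\otimes\Phibold^\sim)$) compose so that the contracted derivative index lands in the correct slot, thereby reproducing the interior term generated by the Leibniz rule.
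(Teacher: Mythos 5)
Your proposal is correct and follows essentially the same route as the paper's proof: transfer the weight via \cref{lem: move M}, set $\Phibold=\tr(\Mbold\otimes\Psibold^\sim)$, apply the Gauss--Green theorem componentwise (the paper absorbs your explicit Leibniz split into the statement of Gauss--Green), and verify by the index computation $\bigl(\tr(\grad\otimes\Phibold^\sim)\bigr)_{i_1,\dots,i_\ell}=\sum_k\partial_k\Phibold_{k,i_1,\dots,i_\ell}$ that the interior term is $\Abold\cdot\div_\Mbold\Psibold$. The index bookkeeping you flag as the delicate step checks out exactly as in the paper.
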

\iffalse
\begin{lemma}\label{lem: divergence modified}
Let $\Omega$, $\Mbold$, $\Abold$ and $\Psibold$ as above. Then for all $\Mbold,\,\Abold,\,\Psibold$ it holds:
\begin{equation}
\int_\Omega (\Mbold\grad\otimes \Abold)\cdot\Psibold \diff \xbold 
= 
\int_{\partial \Omega}
(\nu\otimes\Abold)
\cdot 
\Phibold
\diff \Hcal^{d-1}
-\int_\Omega \Abold \cdot \div_{\Mbold}\Psibold\diff\xbold
\end{equation}
where $\nu$ is the outward unit normal on $\partial \Omega$,
\begin{equation}
\Phibold 
:=
\tr(\Mbold\otimes\Psibold^\sim)
\quad
\text{and}
\quad
\div_{\Mbold}\Psibold := \tr\left(\grad\otimes \Phibold^\sim\right).
\label{eq: divergence modified}
\end{equation}
\end{lemma}
\fi
\begin{proof}
We know from \cref{lem: move M} that:
\[
\int_\Omega (\Mbold\grad\otimes\Abold)\cdot\Psibold \diff\xbold 
=
\int_\Omega (\grad\otimes \Abold)\cdot \tr\left(\Mbold\otimes\Psibold^\sim\right)\diff\xbold.
\]
Let $\Phibold:=\tr\left(\Mbold\otimes\Psibold^\sim\right)\in\Tcal^{\ell+1}(\RR^d)$.
From Gauss-Green theorem, in Einstein notation:
\[
\begin{aligned}
\int_\Omega 
(\grad\otimes \Abold)\cdot\Phibold
\diff\xbold 
&= 
\int_\Omega 
\partial_{k} \Abold_{i_1,\dots, i_\ell}\Phibold_{k,i_1,\dots, i_\ell} 
\diff \xbold
\\
&=
\int_{\partial \Omega}
\nu_k \Abold_{i_1,\dots,i_\ell} \Phibold_{k,i_1,\dots,i_\ell}
\diff \Hcal^{d-1}
-\int_\Omega 
\Abold_{i_1,\dots, i_\ell}   \partial_{k}  (\Phibold^\sim)_{i_1,\dots, i_\ell, k} 
\diff\xbold.
\end{aligned}
\]
Now, by remarking that
\[
\nu_k 
\Abold_{i_1,\dots,i_\ell}
=
(\nu \otimes \Abold)_{k,i_1,\dots,i_\ell}
\]
and
\[
\partial_{k}  (\Phibold^\sim)_{i_1,\dots, i_\ell,k} 
= 
(\grad\otimes\Phibold^\sim)_{k,i_1,\dots, i_\ell, k} 
= 
\left(
\tr(\grad\otimes\Phibold^\sim)
\right)_{i_1, \dots, i_\ell},
\]
we conclude
\[
\begin{aligned}
\int_\Omega (\Mbold\grad\otimes\Abold)\cdot \Psibold\diff\xbold 
=
&
\int_{\partial \Omega}
(\nu\otimes\Abold) \cdot \tr(\Mbold\otimes\Psibold^\sim)
\diff \Hcal^{d-1}\\
&
-\int_\Omega \Abold \cdot \tr(\grad\otimes\left[\tr(\Mbold\otimes\Psibold^\sim)\right]^\sim) \diff\xbold.
\end{aligned}
\]
\end{proof}

\begin{remark}
For $\Psibold\in\CCCcomp{1}[\Omega,\Tcal^\ell(\RR^d)]$, in \cref{lem: divergence modified} the integral on $\partial \Omega$ vanishes:
\[
\int_\Omega (\Mbold\grad\otimes \Abold)\cdot\Psibold \diff \xbold 
= 
-\int_\Omega \Abold \cdot \div_{\Mbold}\Psibold\diff\xbold.
\]
\end{remark}

\iffalse
\begin{remark}
In general, the divergence operator is defined as
$
\div(\Psibold) = \tr(\grad\otimes\Psibold^\sim).
$
When $\Psibold$ is a symmetric tensor, since $\Psibold^\sim=\Psibold$, we retrieve
$
\div(\Psibold) = \tr(\grad\otimes{\Psibold})
$
in \cite[Equation (2.1)]{BreKunPoc2010}.
\end{remark}
\fi

With the notion of weighted divergence $\div_\Mbold$ of a $(\ell+1)$-tensor field in place, we can talk about \emph{weak derivatives}, similarly to \cite[Definition 2.4]{Bredies2012}.

\begin{definition}
Let $\Mbold\in\CCCspace{1}[\Omega,\Tcal^2(\RR^d)]$. We say that $\Abold\in\LLloc{1}[\Omega,\Tcal^\ell(\RR^d)]$ has a weak $\Mbold$-weighted derivative if there exists $\etabold\in\LLloc{1}[\Omega,\Tcal^\ell(\RR^d)]$ such that
\[
\int_\Omega \etabold\cdot \Psibold\diff\xbold = -\int_\Omega \Abold\cdot\div_\Mbold\Psibold\diff\xbold
\]
for all $\Psibold\in\CCCcomp{1}[\Omega,\Tcal^\ell(\RR^d)]$. We write $\etabold = \Mbold\grad\otimes\Abold$ in this case.
\end{definition}

We can now define the \emph{total directional variation} of order $\Qrm$ for $\ubold\in\LL{1}[\Omega,\Tcal^\ell(\RR^d)]$.

\begin{definition}\label{def: TDV}
Let $\Omega\subset\RR^d$, $\ubold\in\LL{1}[\Omega,\Tcal^\ell(\RR^d)]$, $\Qrm\in\NN$, $\Mcal := (\Mbold_j)_{j=1}^\Qrm$ be a collection of fields in $\CCCspace{\infty}[\Omega,\Tcal^2(\RR^d)]$ and $\alphabold:=(\alpha_0,\dots,\alpha_{\Qrm-1})$ be a positive weight vector.  The total directional variation of order $\Qrm$, associated with $\Mcal$ and $\alphabold$, is defined as:
\begin{equation}
\TDVM{\Qrm,\ell}{\alphabold}[\ubold,\Mcal]:= \sup_\Psibold 
\left\{ 
\int_\Omega \ubold\cdot \div_{\Mcal}^\Qrm \Psibold \diff \xbold \,\Big\lvert\,\text{for all } \Psibold \in\Ycal_{\Mcal,\alphabold}^{\Qrm,\ell} \right\},
\label{eq: TDV definition M}
\end{equation}
where
\begin{equation}
\Ycal_{\Mcal,\alphabold}^{\Qrm,\ell}
= 
\left\{
\Psibold\,:\, \Psibold \in \CCCcomp{\Qrm}[\Omega,\Tcal^{\ell+\Qrm}(\RR^d)],\, \norm{\div_{\Mcal}^j\Psibold}_\infty\leq \alpha_j,\forall\,j=0,\dots\Qrm-1
\right\}
\label{eq: Ycal}
\end{equation}
and the weighted divergence of order $j\in[0,\Qrm]$ is defined recursively, from \cref{lem: divergence modified}, as:
\begin{equation}
\begin{aligned}
\div^0_{(\blank)}(\Psibold) &:=\Psibold, &\text{if } j&=0,\\
\div^1_{(\Mbold_\Qrm)}(\Psibold) &:= \tr\left(\grad\otimes \left[\tr(\Mbold_\Qrm\otimes\Psibold^\sim)\right]^\sim\right), &\text{if } j&=1,\\
\div_{(\Mbold_{\Qrm-j+1},\dots,\Mbold_{\Qrm)}}^j(\Psibold) &:=  
\div_{(\Mbold_{\Qrm-j+1})}^1
\left(
\div_{(\Mbold_{\Qrm-j+2},\dots,\Mbold_{\Qrm})}^{j-1} (\Psibold)
\right) &\text{if } j&=2,\dots,\Qrm.
\end{aligned}
\end{equation}
Thus the $\Qrm^\text{th}$ weighted divergence w.r.t.\ $\Mcal$ is $
\div^\Qrm_{\Mcal}(\Psibold) := 
\div_{\Mbold_1}^1\left(
\div_{\Mbold_2}^1\left(
\dots\left( 
\div_{\Mbold_\Qrm}^1 (\Psibold)
\right)
\right)
\right).$
\end{definition}

\begin{remark}
For $\Mcal=(\Ibold)_{j=1}^{\Qrm}$, where $\Ibold$ is the identity matrix, then $\TDVM{\Qrm,\ell}{\alphabold}[\ubold,\Mcal]$ coincides with extension to $\Tcal^\ell(\RR^d)$ tensors of the non-symmetric total generalized variation $\neg\mathrm{sym}\TGV{\alphabold}{\Qrm,\ell}[\ubold]$ defined (for $\ell=0$) in \cite[Remark 3.10]{BreKunPoc2010}.
\end{remark}
 
\section{Tensor fields of bounded directional variation}\label{sec: BDV}
In what follows,
we introduce 
the space of bounded directional variation $\BDVM{\Qrm}{}[\Omega,\Mcal,\Tcal^\ell(\RR^d)]$, which is the natural space for the $\TDVM{}{}$ regulariser. 
We also state some results about the kernel of the weighted derivatives. 
To do so, we will treat the discussion of these spaces for first- and higher-order derivatives, separately, so as to build a recursion rule for tensors of bounded directional variation with weighted derivatives of any order $\Qrm>0$.

\subsection{First order derivative}
As said, when $\Qrm=1$ then the collection $\Mcal$ is made by one smooth tensor field only, namely $\Mbold$: therefore we will use $\Mbold$ within this section. We will always assume that $\Mbold(\xbold)$ can be identified with a positive definite matrix at every point of $\Omega$. 

\begin{remark}
For $\Qrm=1$, when $\TDVM{1,\ell}{\alphabold}[\ubold]<\infty$ in \cref{eq: TDV definition M},  then
the weak weighted derivative is a Radon measure on $\Omega$ with values in $\Tcal^{\ell+1}(\RR^d)$.
\end{remark}

\begin{definition}\label{def: total directional variation order 1}
The \emph{total directional variation} of a $\Tcal^\ell(\RR^d)$-valued function $\ubold$ w.r.t.\ the field $\Mbold$ is defined as the Radon norm of $\Mbold\grad\otimes \ubold$ and indicated as:
\begin{equation}
\TDVM{1,\ell}{\bm{1}}[\ubold,\Mbold] 
= 
\norm{\Mbold\grad\otimes \ubold}_{\Mvarcal(\Omega,\Tcal^{\ell+1}(\RR^d))}.
\end{equation}
\end{definition}

\begin{definition}
Let $\Omega\subset\RR^d$ be a bounded Lipschitz domain and $\Mbold\in\CCCspace{\infty}[\Omega,\Tcal^2(\RR^d)]$ such that $\Mbold(\xbold)$ is a positive definite matrix at every $\xbold\in\Omega$.
The space of $\Tcal^\ell(\RR^d)$-valued tensor functions $\ubold$ of \emph{bounded directional variation of order $1$} with respect to the field $\Mbold$ is defined as 
\[
\BDVM{1}{}[\Omega,\Mbold,\Tcal^\ell(\RR^d)] 
= 
\left\{
\ubold\in\LL{1}[\Omega,\Tcal^\ell(\RR^d)]\,\Big\lvert\, \Mbold\grad\otimes \ubold\in\Mvarcal(\Omega,\Tcal^{\ell+1}(\RR^d))
\right\}.
\]
\end{definition}

For simplicity, we denote $\BDVM{}{}[\ubold,\Mbold,\Tcal^\ell(\RR^d)]=\BDVM{1}{}[\ubold,\Mbold,\Tcal^\ell(\RR^d)]$.

\begin{remark}
Since $\Tcal^\ell(\RR^d)\equiv\Sym{\ell}(\RR^d)$ for $\ell=0,1$, it is easily seen that
\[
\BDVM{}{}[\Omega,\Ibold,\Tcal^0(\RR^d)]  
\equiv
\BV{}[\Omega,\RR]\quad\text{and}\quad
\BDVM{}{}[\Omega,\Ibold,\Tcal^1(\RR^d)]  
\equiv
\BV{}[\Omega,\RR^d],
\]
with $\BV{}[\Omega,\RR]$, $\BV{}[\Omega,\RR^d]$ the spaces of scalar-valued  and vector-valued functions of bounded variation, respectively~\cite{AmbrosioBook2000}.
\end{remark}

\iffalse
The following Sobolev-Korn type inequality holds for smooth tensor fields with compact support, similarly to \cite[Theorem 4.8]{Bredies2012}.
\begin{lemma}\label{lem: poincare}
Let $\ubold\in\CCCcomp{1}[\Omega,\Tcal^\ell(\RR^d)]$ and $\Mbold\in\LL{\infty}(\Omega,\Tcal^2(\RR^d))$ be an invertible field for every $\xbold\in\Omega$ such
that $\sup\norm{\Mbold^{-1}(\xbold)}_2<\infty$. 
Then there exists a constant $C$ depending only on $\Omega,\ell$ and $\norm{\Mbold^{-1}}$ such that
\[
\norm{\ubold}_{d/(d-1)}
\leq C 
\norm{\Mbold \grad \otimes \ubold}_1.
\]
\end{lemma}
\begin{proof}
Let $\norm{\blank}$ be the operator norm. We have the desired inequality, where the first one is due to the standard Sobolev inequality for tensor-valued functions:
\[
\norm{\ubold}_{d/(d-1)} 
\leq C_1 
\norm{\grad\otimes\ubold}_{1} = 
C_1 \norm{\Mbold^{-1}\Mbold \grad\otimes\ubold}_1
\leq
C_1 \norm{\Mbold^{-1}} \norm{\Mbold\grad\otimes \ubold}_1,
\]
with $C:=C_1\norm{\Mbold^{-1}}$, for $\norm{\Mbold^{-1}} >0$.
\end{proof}
\fi

We now prove that tensor fields of bounded directional variation can be approximated by smooth functions, similarly to \cite[Proposition 4.13]{Bredies2012}. For doing so we firstly need to show that the weighted gradient is closed, similarly to \cite[Proposition 4.2]{Bredies2012}.
\begin{proposition}\label{prop: closedness of weighted derivative}
Let $p\in[1,\infty]$. If $\ubold_j\weakconv\ubold$ in $\LL{p}[\Omega,\Tcal^\ell(\RR^d)]$ and $\Mbold\grad\otimes\ubold_j \weakconv \etabold$  in $\LL{p}[\Omega,\Tcal^{\ell+1}(\RR^d)]$, then $\Mbold\grad\otimes \ubold = \etabold$, i.e.\ the weighted gradient is closed in the distributional sense. The statement remains true for weak$-\ast$ convergence in $\Mvarcal(\Omega, \Tcal^\ell(\RR^d))$ and $\Mvarcal(\Omega,\Tcal^{\ell+1}(\RR^d))$, respectively.
\end{proposition}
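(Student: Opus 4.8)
The plan is to prove closedness of the weighted gradient by testing against smooth compactly-supported functions and passing to the limit using the integration-by-parts formula from \Cref{lem: divergence modified}. The key observation is that the defining relation for the weak $\Mbold$-weighted derivative,
\[
\int_\Omega \etabold\cdot\Psibold\diff\xbold = -\int_\Omega \ubold\cdot\div_\Mbold\Psibold\diff\xbold,
\]
is \emph{linear} in both $\ubold$ and $\etabold$, with the two sides pairing the functions against fixed smooth test objects $\Psibold$ and $\div_\Mbold\Psibold$, respectively. Since convergence in the hypotheses is precisely weak (or weak-$\ast$) convergence, testing against fixed bounded quantities is exactly what passes to the limit.

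First I would fix an arbitrary $\Psibold\in\CCCcomp{1}[\Omega,\Tcal^{\ell+1}(\RR^d)]$. For each $j$, since $\Mbold\grad\otimes\ubold_j = \etabold$ holds by the definition of weak weighted derivative applied to $\ubold_j$, and the boundary term vanishes for compactly supported $\Psibold$ (see the \Cref{remark} following \Cref{lem: divergence modified}), we have
\[
\int_\Omega (\Mbold\grad\otimes\ubold_j)\cdot\Psibold\diff\xbold = -\int_\Omega \ubold_j\cdot\div_\Mbold\Psibold\diff\xbold.
\]
Here $\div_\Mbold\Psibold$ is itself a fixed element of $\CCospace{}[\Omega,\Tcal^\ell(\RR^d)]$ (it is continuous with compact support, being built from $\Psibold$, $\Mbold$, and one derivative via \Cref{eq: weighted div operator}), and likewise $\Psibold$ itself lies in $\CCospace{}[\Omega,\Tcal^{\ell+1}(\RR^d)]$. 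Thus both sides are continuous linear functionals evaluated along the weakly convergent sequences.

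Next I would pass to the limit on each side separately. On the left, $\Mbold\grad\otimes\ubold_j\weakconv\etabold$ in $\LL{p}[\Omega,\Tcal^{\ell+1}(\RR^d)]$ gives $\int_\Omega(\Mbold\grad\otimes\ubold_j)\cdot\Psibold\to\int_\Omega\etabold\cdot\Psibold$, since $\Psibold\in\LL{p^\ast}$ as a bounded compactly supported field. On the right, $\ubold_j\weakconv\ubold$ in $\LL{p}[\Omega,\Tcal^\ell(\RR^d)]$ gives $-\int_\Omega\ubold_j\cdot\div_\Mbold\Psibold\to-\int_\Omega\ubold\cdot\div_\Mbold\Psibold$, since $\div_\Mbold\Psibold\in\LL{p^\ast}$. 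Equating the limits yields
\[
\int_\Omega\etabold\cdot\Psibold\diff\xbold = -\int_\Omega\ubold\cdot\div_\Mbold\Psibold\diff\xbold
\]
for every test field $\Psibold$, which is exactly the statement that $\Mbold\grad\otimes\ubold=\etabold$ in the distributional sense. For the weak-$\ast$ variant in $\Mvarcal$, the argument is identical: the test fields $\Psibold$ and $\div_\Mbold\Psibold$ both belong to the predual $\CCospace{}$, so weak-$\ast$ convergence of the measures against these continuous functions delivers the same two limits.

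The only point requiring care — and the main (modest) obstacle — is verifying that $\div_\Mbold\Psibold$ is an admissible test function on the correct side: one must check it is genuinely continuous with compact support so that it pairs against $\ubold_j\weakconv\ubold$. This follows because $\Mbold\in\CCCspace{\infty}$ and $\Psibold\in\CCCcomp{1}$, so the single derivative in $\div_\Mbold$ lands on continuous arguments and the support of $\div_\Mbold\Psibold$ is contained in $\supp\Psibold$; hence $\div_\Mbold\Psibold\in\CCospace{}[\Omega,\Tcal^\ell(\RR^d)]\subset\LL{p^\ast}$ for all $p$, including $p=1$ where $p^\ast=\infty$. With this verification in hand, the proof reduces to the one-line limit passage above, and no compactness or lower-semicontinuity machinery is needed.
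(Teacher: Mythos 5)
Your argument is correct and is precisely the standard closedness proof that the paper omits with the remark that it is ``a notational adaptation of \cite[Proposition 4.2]{Bredies2012}'': fix $\Psibold\in\CCCcomp{1}$, use the defining integration-by-parts identity for each $j$, and pass to the limit on both sides by pairing the weakly (or weakly-$\ast$) convergent sequences against the fixed test objects $\Psibold$ and $\div_\Mbold\Psibold\in\CCospace{}\subset\LL{p^\ast}$. The only blemish is the phrase ``since $\Mbold\grad\otimes\ubold_j=\etabold$ holds by the definition,'' which should read that $\ubold_j$ \emph{has} a weak $\Mbold$-weighted derivative (equal to the $j$-th term of the convergent sequence), but this does not affect the argument.
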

\begin{proof}
Omitted since it is just a notational adaptation of \cite[Proposition 4.2]{Bredies2012}.
\end{proof}

Similarly to \cite[Proposition 4.13]{Bredies2012}, we can approximate functions of bounded directional deformation with smooth functions.
\begin{proposition}\label{prop: 4.13 Bredies2012}
Let $\Omega$ be a bounded domain. 
The set $\CCCspace{\infty}[\Omega,\Tcal^\ell(\RR^d)]\cap\BDVM{}{}[\Omega,\Mbold,\Tcal^\ell(\RR^d)]$ is dense in $\BDVM{}{}[\Omega,\Mbold,\Tcal^\ell(\RR^d)]$ in the sense that for each $\ubold\in\BDVM{}{}[\Omega,\Mbold,\Tcal^\ell(\RR^d)]$ there exists an approximating sequence $\{\ubold_j\}_{j\in\NN}\subset\CCCspace{\infty}[\Omega,\Mbold,\Tcal^\ell(\RR^d)]$ that converges strictly to $\ubold$, i.e.,
\begin{equation}
\begin{dcases}
\ubold_j \to \ubold &\text{in }\LL{1}[\Omega, \Mbold,\Tcal^\ell(\RR^d )],\\
\Mbold\grad\otimes \ubold_j\weakstarconv \Mbold\grad\otimes \ubold &\text{in }\Mvarcal(\Omega,\Tcal^\ell(\RR^d ),\\
\norm{\Mbold\grad\otimes \ubold_j}_\Mvarcal \to \norm{\Mbold\grad\otimes \ubold}_\Mvarcal.
\end{dcases}
\label{eq: convergence added}
\end{equation}
If the support of $\ubold$ is compact in $\Omega$, then $(\ubold_j)_{j\in\NN}$ can be chosen such that each $\ubold_j$ is in
$\CCCcomp{\infty}[\Omega,\Tcal^\ell(\RR^d)]$.
\end{proposition}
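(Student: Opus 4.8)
The plan is to follow the classical mollification argument for $\BV$ functions, as in \cite[Proposition 4.13]{Bredies2012}, adapting it to the weighted gradient $\Mbold\grad\otimes(\blank)$. First I would take $\ubold\in\BDVM{}{}[\Omega,\Mbold,\Tcal^\ell(\RR^d)]$ and construct the approximating sequence by convolution with a standard mollifier $\rho_\varepsilon$. To handle the boundary of the bounded domain $\Omega$, I would use the usual partition-of-unity trick: cover $\Omega$ by finitely many open sets $\Omega_0\Subset\Omega$ and near-boundary patches $\Omega_1,\dots,\Omega_N$, choose a subordinate partition of unity $(\varphi_i)$, and mollify each piece $\varphi_i\ubold$ with a mollification radius small enough (and, on the boundary patches, after a slight inward shift) that the mollified piece remains supported in $\Omega$. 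Setting $\ubold_\varepsilon=\sum_i \rho_{\varepsilon_i}\ast(\varphi_i\ubold)$ then gives a smooth function, and one shows $\ubold_\varepsilon\to\ubold$ in $\LL{1}$ by standard properties of mollifiers.

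The heart of the matter is controlling the weighted gradient of the mollified sequence, and here the weighting field $\Mbold$ must be commuted past the mollification. Because $\Mbold$ is smooth ($\Mbold\in\CCCspace{\infty}$) and positive definite, one has
\[
\Mbold\grad\otimes(\rho_\varepsilon\ast\ubold)
=
\rho_\varepsilon\ast(\Mbold\grad\otimes\ubold)
+
\bigl[\Mbold\grad\otimes(\rho_\varepsilon\ast\ubold)-\rho_\varepsilon\ast(\Mbold\grad\otimes\ubold)\bigr],
\]
where the bracketed commutator term arises precisely because $\Mbold$ is not constant. The first term is a mollification of the Radon measure $\Mbold\grad\otimes\ubold$, which converges weakly-$\ast$ to it and whose total mass converges to $\norm{\Mbold\grad\otimes\ubold}_\Mvarcal$ by lower semicontinuity together with the non-increase of mass under mollification. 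The commutator term is the main obstacle: I expect it to be estimated by the modulus of continuity of $\grad\Mbold$ against the measure $\grad\otimes\ubold$ (equivalently $\Mbold^{-1}(\Mbold\grad\otimes\ubold)$, using invertibility of $\Mbold$ and $\sup\norm{\Mbold^{-1}}<\infty$), and shown to vanish in the $\Mvarcal$-norm as $\varepsilon\to0$. The identity from \Cref{lem: move M}, which transfers $\Mbold$ onto the test field, is the tool that makes this commutator computation rigorous at the level of distributional pairings.

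Once the weighted gradients are controlled, the weak-$\ast$ convergence $\Mbold\grad\otimes\ubold_\varepsilon\weakstarconv\Mbold\grad\otimes\ubold$ follows from the closedness of the weighted gradient established in \Cref{prop: closedness of weighted derivative}, and strict convergence is obtained by combining the lower-semicontinuity inequality $\norm{\Mbold\grad\otimes\ubold}_\Mvarcal\leq\liminf_\varepsilon\norm{\Mbold\grad\otimes\ubold_\varepsilon}_\Mvarcal$ with the reverse inequality $\limsup_\varepsilon\norm{\Mbold\grad\otimes\ubold_\varepsilon}_\Mvarcal\leq\norm{\Mbold\grad\otimes\ubold}_\Mvarcal$ coming from the mollification estimate. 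Finally, for the compact-support case, if $\supp\ubold\Subset\Omega$ then the partition of unity is unnecessary: a single mollification with $\varepsilon$ smaller than $\dist(\supp\ubold,\partial\Omega)$ already yields $\ubold_\varepsilon\in\CCCcomp{\infty}[\Omega,\Tcal^\ell(\RR^d)]$, and the same estimates give strict convergence. The only genuinely new ingredient beyond \cite[Proposition 4.13]{Bredies2012} is the commutator estimate for the non-constant, anisotropic weight $\Mbold$; everything else is a notational transcription.
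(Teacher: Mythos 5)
Your plan follows the same route as the paper's proof: mollification (with the usual partition-of-unity localisation) to obtain the $\LL{1}$ and Radon-norm convergences, and then boundedness of the mollified weighted gradients together with the closedness of $\Mbold\grad$ from \cref{prop: closedness of weighted derivative} to deduce the weak-$\ast$ convergence. The paper's own proof is essentially a two-line reference to \cite[Proposition 4.13]{Bredies2012}; your explicit identification and treatment of the commutator term $\Mbold\grad\otimes(\rho_\varepsilon\ast\ubold)-\rho_\varepsilon\ast(\Mbold\grad\otimes\ubold)$, which vanishes only because $\Mbold$ is smooth, is precisely the detail the paper leaves implicit, and it is handled correctly.
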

\begin{proof}
The proof is based on a standard use of mollifiers so as to obtain a sequence $(\ubold_j)_{j\in\NN}$ in $\CCCspace{\infty}[\Omega, \Tcal^\ell(\RR^d)]$ satisfying the first and the third convergence in \cref{eq: convergence added}.
The boundedness of $(\Mbold \grad\otimes\ubold_j)_{j\in\NN}$  in $\Mvarcal(\Omega,\Tcal^\ell(\RR^d))$ implies that there exists a subsequence (not relabelled) weakly-$\ast$ converging to $\Mbold\grad\otimes \ubold$ since the operator $\Mbold\grad$ is closed by \cref{prop: closedness of weighted derivative}. 
\end{proof}

We are now going to discuss some results about the kernel of the weighted derivative operator $\ker(\Mbold\grad)$: in order to do so, we will define a continuous projection map $R$ onto $\ker(\Mbold\grad)$, so as to prove the coercivity estimate for the total directional variation in \cref{eq: sobolev-korn 1}.

\begin{remark}\label{rem: 4.5}
Being $\ker(\Mbold\grad)$ the space of polynomials of vanishing first weighted derivative, it is in $\LL{\infty}[\Omega,\Tcal^\ell(\RR^d)]$ because $\Omega$ is bounded, therefore
\[
\ker(\Mbold\grad)^\perp 
= 
\left
\{
\fbold\in\LL{d}[\Omega,\Tcal^\ell(\RR^d)]\,\Big\lvert\, \int_\Omega \fbold\cdot \ubold=0\diff\xbold,\text{ for all }  \ubold\in\ker(\Mbold\grad)
\right\}
\]
is a closed subspace of $\LL{d}[\Omega, \Tcal^\ell(\RR^d)]$.
\end{remark}

\begin{remark}
Note also that $\ker(\Mbold\grad)\equiv\ker(\grad)$ since the field $\Mbold$ is assumed everywhere invertible.
\end{remark}

\begin{proposition}\label{prop: beginning appendix A bre-hol}
There exists a continuous projection $R:\LL{d}[\Omega,\Tcal^\ell(\RR^d)] \to \LL{d}[\Omega,\Tcal^\ell(\RR^d)]$ such that 
\[
\Im(R)=\ker(\Mbold\grad)=\ker(\grad)\quad\text{and}\quad \ker(R)=\ker(\Mbold\grad)^\perp=\ker(\grad)^\perp.
\]
\end{proposition}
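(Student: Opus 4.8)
The plan is to use that, as noted in \Cref{rem: 4.5} and the subsequent remark, $\ker(\Mbold\grad) = \ker(\grad)$ is finite-dimensional: it consists of the constant $\Tcal^\ell(\RR^d)$-valued fields, since $\Mbold$ is everywhere invertible. Write $K := \ker(\grad)$ and fix a basis $\bbold_1,\dots,\bbold_m$ of $K$ made of constant tensor fields, $m = \dim K < \infty$. Because $\Omega$ is bounded, each $\bbold_k$ belongs to $\LL{\infty}[\Omega,\Tcal^\ell(\RR^d)] \subset \LL{d^\ast}[\Omega,\Tcal^\ell(\RR^d)]$ with $1/d + 1/d^\ast = 1$; hence, by H\"older's inequality, each functional $\fbold\mapsto \int_\Omega \fbold\cdot\bbold_k\diff\xbold$ is well defined and continuous on $\LL{d}[\Omega,\Tcal^\ell(\RR^d)]$, with $\abs{\int_\Omega \fbold\cdot\bbold_k\diff\xbold}\leq \norm{\fbold}_d\norm{\bbold_k}_{d^\ast}$.

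Next I introduce the Gram matrix $G\in\RR^{m\times m}$, $G_{jk} := \int_\Omega \bbold_j\cdot\bbold_k\diff\xbold = \abs{\Omega}\,(\bbold_j\cdot\bbold_k)$. Since the tensor scalar product is an inner product on $\Tcal^\ell(\RR^d)$ and $\abs{\Omega} > 0$, the matrix $G$ is symmetric and positive definite, hence invertible. I then set
\[
R\fbold := \sum_{j,k=1}^{m} (G^{-1})_{jk}\left(\int_\Omega \fbold\cdot\bbold_k\diff\xbold\right)\bbold_j,\qquad \fbold\in\LL{d}[\Omega,\Tcal^\ell(\RR^d)].
\]
By the previous step $R$ is a finite linear combination of continuous functionals with fixed coefficients $\bbold_j$, so $R$ is linear and continuous, and clearly $\Im(R)\subseteq K$.

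It remains to verify that $R$ is the required projection. For a basis element $\bbold_i$ one computes $R\bbold_i = \sum_{j,k}(G^{-1})_{jk}G_{ki}\bbold_j = \sum_j (G^{-1}G)_{ji}\,\bbold_j = \bbold_i$, using the symmetry of $G$; thus $R|_K = \Id$, which gives $R^2 = R$ and $\Im(R) = K = \ker(\grad) = \ker(\Mbold\grad)$. Finally, since $G^{-1}$ is invertible, $R\fbold = 0$ holds if and only if $\int_\Omega \fbold\cdot\bbold_k\diff\xbold = 0$ for every $k$, equivalently $\int_\Omega \fbold\cdot\ubold\diff\xbold = 0$ for all $\ubold\in K$; by the definition of the orthogonal complement in \Cref{rem: 4.5} this means exactly $\ker(R) = \ker(\Mbold\grad)^\perp = \ker(\grad)^\perp$.

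The construction is elementary precisely because $\ker(\grad)$ is finite-dimensional, so no Hahn--Banach extension or compactness argument is needed; the only points requiring care, rather than a genuine obstacle, are the well-definedness of the $\LL{2}$-type pairing on $\LL{d}\times K$ (which rests on $\Omega$ being bounded, so that constants lie in every $\LL{p}$) and the invertibility of $G$ (which rests on the nondegeneracy of the tensor inner product). Matching $\ker(R)$ with the orthogonality defined in \Cref{rem: 4.5} is then immediate from the invertibility of $G^{-1}$.
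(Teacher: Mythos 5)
Your construction is correct, but it takes a genuinely different route from the paper. The paper's proof is abstract: it observes that $\ker(\Mbold\grad)$ is finite-dimensional, asserts the decomposition $\LL{d}[\Omega,\Tcal^\ell(\RR^d)]=\ker(\Mbold\grad)\oplus\ker(\Mbold\grad)^\perp$ with both summands closed, and invokes the standard complemented-subspace fact (via the open mapping theorem, citing Brezis) to obtain a continuous projection; it then notes that the adjoint $R^\ast$ is a continuous projection on $\LL{d/(d-1)}[\Omega,\Tcal^\ell(\RR^d)]$ onto $\ker(\Mbold\grad)$. You instead build $R$ explicitly from the Gram matrix of a basis of the kernel. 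What your approach buys: it actually \emph{proves} the direct-sum decomposition rather than taking it for granted, the continuity bound is explicit, and the formula for $R$ is symmetric in the $\LL{d}$--$\LL{d/(d-1)}$ pairing, so the adjoint statement the paper needs later (in \cref{lem: A1}) comes for free with the same formula. What the paper's approach buys: it is shorter and applies verbatim to the higher-order kernels $\ker(\Mbold_\Qrm\grad\otimes\dots\otimes\Mbold_1\grad)$ without exhibiting a basis. Two minor caveats on your write-up, neither of which is a gap: your identification of $\ker(\grad)$ with the constant fields presumes $\Omega$ connected (a ``domain'' in the paper's usage), but your argument only uses that $K$ is finite-dimensional and contained in $\LL{\infty}[\Omega,\Tcal^\ell(\RR^d)]$, which is exactly what \cref{rem: 4.5} provides, so it goes through with any basis of $K$; and the positive definiteness of $G$ follows already from the linear independence of the $\bbold_k$ in $\LL{2}[\Omega,\Tcal^\ell(\RR^d)]$, without needing them constant.
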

\begin{proof}
The proof is an easy adaptation of the proof given at the beginning of \cite[Appendix A]{BreHol2014}. 
We observe that $\ker(\Mbold\grad)$ is finite-dimensional, therefore 
\[
\LL{d}[\Omega,\Tcal^\ell(\RR^d)] = \ker(\Mbold\grad) \oplus \ker(\Mbold\grad)^\perp
\]
and since both subspaces are closed, then the open mapping theorem implies that there exists a continuous projection $R$ such that:
\begin{equation}
R:\LL{d}[\Omega,\Tcal^\ell(\RR^d)] \to \LL{d}[\Omega,\Tcal^\ell(\RR^d)]
\label{eq: R map}
\end{equation}
with $\Im(R)=\ker(\Mbold\grad)$ and $\ker(R)=\ker(\Mbold\grad)^\perp$, see \cite[Example 1, pag.\ 38]{BrezisBook2010}.
As consequence, the adjoint projection $R^\ast$ is a continuous projection in $\LL{d/(d-1)}[\Omega,\Tcal^\ell(\RR^d)]$ onto $\ker(\Mbold\grad)^{\perp\perp}=\ker(\Mbold\grad)$.
\end{proof}

The following Sobolev-Korn inequality holds similarly to \cite[Corollary 4.20]{Bredies2012}, which will be proved for the general case $\Qrm\geq 1$ in \cref{prop: coercivity}.
\begin{lemma}
For any continuous projection $R$ onto $\ker(\Mbold\grad)$ as in \cref{eq: R map}, there exists a constant $C > 0$, depending only on $\Omega$, $R$ and $\Mbold^{-1}$, such that it holds for each $\ubold\in\BDVM{}{}[\Omega,\Mbold,\Tcal^\ell(\RR^d)]$:
\begin{equation}
\norm{\ubold-R \ubold}_{d/(d-1)} \leq C\norm{\Mbold\grad \otimes \ubold}_\Mvarcal.
\label{eq: sobolev-korn 1}
\end{equation}
\end{lemma}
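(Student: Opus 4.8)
The plan is to prove \cref{eq: sobolev-korn 1} by a compactness--contradiction argument (of Peetre--Tartar type), using that $\ker(\Mbold\grad)$ is finite dimensional and that $\BDVM{}{}[\Omega,\Mbold,\Tcal^\ell(\RR^d)]$ coincides, with equivalent norms, with the classical space $\BV{}[\Omega,\Tcal^\ell(\RR^d)]$. Indeed, since $\Mbold\in\CCCspace{\infty}$ is pointwise invertible on the bounded domain $\Omega$, both $\Mbold$ and $\Mbold^{-1}$ are bounded on $\overline{\Omega}$, so that $\grad\otimes\ubold=\Mbold^{-1}(\Mbold\grad\otimes\ubold)$ as $\Tcal^{\ell+1}(\RR^d)$-valued measures and
\[
\norm{\grad\otimes\ubold}_\Mvarcal\leq\norm{\Mbold^{-1}}_\infty\,\norm{\Mbold\grad\otimes\ubold}_\Mvarcal,\qquad\norm{\Mbold\grad\otimes\ubold}_\Mvarcal\leq\norm{\Mbold}_\infty\,\norm{\grad\otimes\ubold}_\Mvarcal.
\]
Hence $\ubold\in\BDVM{}{}$ iff $\ubold\in\BV{}$, and on the bounded Lipschitz domain $\Omega$ I may freely invoke the two standard facts: the continuous Sobolev embedding $\norm{\ubold}_{d/(d-1)}\leq C_\Omega(\norm{\ubold}_1+\norm{\grad\otimes\ubold}_\Mvarcal)$ and the \emph{compact} embedding $\BV{}[\Omega,\Tcal^\ell(\RR^d)]\hookrightarrow\hookrightarrow\LL{1}[\Omega,\Tcal^\ell(\RR^d)]$.

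First I would reduce to a subspace. By \cref{prop: beginning appendix A bre-hol} it suffices to prove the estimate on $X:=\{\ubold\in\BDVM{}{}[\Omega,\Mbold,\Tcal^\ell(\RR^d)] : R\ubold=0\}$: for general $\ubold$ one has $R(\ubold-R\ubold)=0$ since $R^2=R$, while $\Mbold\grad\otimes(\ubold-R\ubold)=\Mbold\grad\otimes\ubold$ because $R\ubold\in\Im(R)=\ker(\Mbold\grad)$, so a bound on $X$ transfers verbatim to $\norm{\ubold-R\ubold}_{d/(d-1)}\leq C\norm{\Mbold\grad\otimes\ubold}_\Mvarcal$. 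Note that on $X$ the operator $\Mbold\grad\otimes$ is injective, since $\ker(\Mbold\grad)\cap\ker(R)=\ker(\Mbold\grad)\cap\ker(\Mbold\grad)^\perp=\{0\}$.

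Then I would argue by contradiction. If \cref{eq: sobolev-korn 1} failed on $X$, there would be $\ubold_n\in X$ with $\norm{\ubold_n}_{d/(d-1)}=1$ and $\norm{\Mbold\grad\otimes\ubold_n}_\Mvarcal\to 0$; by the first paragraph $(\ubold_n)$ is bounded in $\BV{}$, so up to a subsequence $\ubold_n\to\ubold$ strongly in $\LL{1}$. The crucial point is that the full Sobolev inequality upgrades this:
\[
\norm{\ubold_n-\ubold_m}_{d/(d-1)}\leq C_\Omega\bigl(\norm{\ubold_n-\ubold_m}_1+\norm{\Mbold^{-1}}_\infty\,\norm{\Mbold\grad\otimes(\ubold_n-\ubold_m)}_\Mvarcal\bigr)\to 0,
\]
so $(\ubold_n)$ is Cauchy, hence convergent, in $\LL{d/(d-1)}$ with $\norm{\ubold}_{d/(d-1)}=1$. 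By closedness of the weighted gradient (\cref{prop: closedness of weighted derivative}) together with $\Mbold\grad\otimes\ubold_n\to 0$, we get $\Mbold\grad\otimes\ubold=0$, i.e.\ $\ubold\in\ker(\Mbold\grad)$. On the other hand, writing $R$ through a representation $R\vbold=\sum_i(\int_\Omega\vbold\cdot\qbold_i)\pbold_i$ with $\pbold_i,\qbold_i\in\LL{\infty}$ (possible since $\Im(R)$ is finite dimensional and spanned by bounded fields), the conditions $R\ubold_n=0$ pass to the $\LL{1}$-limit, so $\ubold\in\ker(R)=\ker(\Mbold\grad)^\perp$. Thus $\ubold\in\ker(\Mbold\grad)\cap\ker(\Mbold\grad)^\perp=\{0\}$, contradicting $\norm{\ubold}_{d/(d-1)}=1$.

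The main obstacle is the critical Sobolev exponent $d/(d-1)$: bare $\LL{1}$-compactness is insufficient to rule out concentration and to transmit the normalisation $\norm{\ubold_n}_{d/(d-1)}=1$ to the limit, which is precisely why the compact $\LL{1}$-embedding must be combined with the continuous (full) Sobolev embedding to promote the $\LL{1}$-Cauchy property into an $\LL{d/(d-1)}$-Cauchy property. This also explains the stated dependence of $C$: on $\Omega$ through the embedding constants, on $R$ through the subspace $X$, and on $\norm{\Mbold^{-1}}_\infty$ through the comparison of weighted and unweighted gradients. A secondary technical care is that $R$, originally defined on $\LL{d}$ in \cref{prop: beginning appendix A bre-hol}, must be read as acting on $\LL{d/(d-1)}\supset\BDVM{}{}$; this is harmless precisely because of the bounded representation of $R$ used above.
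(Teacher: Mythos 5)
Your proof is correct and follows essentially the same route as the paper: the paper first obtains the $\LL{1}$ bound $\norm{\ubold-R\ubold}_1\leq C\norm{\Mbold\grad\otimes\ubold}_\Mvarcal$ by the compactness--contradiction argument of \cite[Theorem 4.19]{Bredies2012} and then upgrades to the exponent $d/(d-1)$ via the continuous embedding of \cref{th: bredies 4.16}, whereas you inline that same compactness argument and perform the upgrade inside it by applying the Sobolev inequality to differences of the contradicting sequence. Both arguments rest on the same three ingredients: finite dimensionality of $\ker(\Mbold\grad)$, compactness of the embedding into $\LL{1}$, and the continuous embedding into $\LL{d/(d-1)}$.
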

\begin{proof}
We firstly need to prove $
\norm{\ubold - R \ubold}_1 \leq C\norm{\Mbold\grad \otimes \ubold}_\Mvarcal$. This follows by the same proof in \cite[Theorem 4.19]{Bredies2012} with minor notational changes.
From the continuous embedding of $\BDVM{}{}$ into $\LL{d/(d-1)}[\Omega,\Tcal^\ell(\RR^d)]$ proved later in \cref{th: bredies 4.16} (with \cref{th: bredies 4.16 estimate} in place) we have
\[
\norm{\ubold - R \ubold}_{d/(d-1)}
\leq
\widetilde{C}\left(
\norm{\ubold-R\ubold}_1 
+
\norm{\Mbold\grad\otimes \ubold}_{\Mvarcal}
\right)
\leq
C\norm{\Mbold\grad\otimes\ubold}_{\Mvarcal}.
\]
\end{proof}

\begin{definition} 
Let 
$
B_\varepsilon(\bm{0})=
\left\{
\xbold \in \RR^d\,:\, \norm{\xbold}_2 \leq \varepsilon
\right\}
$ 
be the $\ell^2$-closed $\varepsilon$-ball centred at $\bm{0}\in\RR^d$ and
$
B_{\Mbold,\varepsilon}(\bm{0})
=
\left\{
\ybold \in \RR^d\,:\, \norm{\Mbold \xbold}_2 \leq \varepsilon
\right\}
$ 
be the $\Mbold$-anisotropic closed $\varepsilon$-ball centred at $\bm{0}\in\RR^d$.
\end{definition}

Similarly to \cite[Lemma A.1]{BreHol2014}, we can prove the following lemma.
\begin{lemma}\label{lem: A1}
The closure of the set
\[
U = 
\left\{
-\div_\Mbold \Psibold\,\lvert\, \Psibold\in\CCCcomp{\infty}[\Omega,\Tcal^{\ell+1}(\RR^d)],\,\norm{\Psibold}_\infty\leq 1
\right\},
\]
in $\LL{d}[\Omega,\Tcal^\ell(\RR^d)]\cap \ker(\Mbold\grad)^\perp$ contains $\bm{0}$ as interior point.
\end{lemma}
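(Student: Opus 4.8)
The plan is to follow the duality/separation strategy of \cite[Lemma A.1]{BreHol2014}, exploiting that $U$ is convex and balanced and that its support function is precisely the total directional variation of order one. First I would record the structural properties of $U$. Since $-\div_\Mbold$ is linear and the set $\{\Psibold : \norm{\Psibold}_\infty\leq 1\}$ is convex and symmetric, $U$ is convex and balanced. Moreover $U\subseteq V:=\LL{d}[\Omega,\Tcal^\ell(\RR^d)]\cap\ker(\Mbold\grad)^\perp$: for any constant field $\ubold\in\ker(\Mbold\grad)=\ker(\grad)$, the integration-by-parts formula of \cref{lem: divergence modified} (whose boundary term vanishes because $\Psibold$ has compact support) gives $\int_\Omega \ubold\cdot(-\div_\Mbold\Psibold)\diff\xbold=\int_\Omega(\Mbold\grad\otimes\ubold)\cdot\Psibold\diff\xbold=0$, so every element of $U$ annihilates $\ker(\Mbold\grad)$.

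Next I would argue by contradiction. Suppose $\bm{0}$ is not an interior point of $\overline{U}$ in $V$. Then for each $n$ there is $v_n\in V$ with $\norm{v_n}_{d}\leq 1/n$ and $v_n\notin\overline{U}$, and strict Hahn-Banach separation of the point $v_n$ from the closed convex set $\overline{U}$ yields a functional $g_n\in V^\ast$ with $\norm{g_n}_{V^\ast}=1$ and $\sup_{u\in U}\langle g_n,u\rangle<\langle g_n,v_n\rangle\leq\norm{v_n}_{d}\leq 1/n$. Here I use that $V^\ast\cong\LL{d/(d-1)}[\Omega,\Tcal^\ell(\RR^d)]/\ker(\Mbold\grad)$, which follows from $\bigl(\ker(\Mbold\grad)^\perp\bigr)^\perp=\ker(\Mbold\grad)$ and the finite-dimensionality of $\ker(\Mbold\grad)=\ker(\grad)$ established around \cref{prop: beginning appendix A bre-hol}. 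Choosing the representative $g_n\in\ker(\Mbold\grad)^\perp$ (subtracting its kernel component changes neither the functional nor its action on $U\subseteq V$), the balancedness of $U$ lets me identify the support function with a Radon norm:
\[
\sup_{u\in U}\langle g_n,u\rangle=\sup\left\{\int_\Omega g_n\cdot(-\div_\Mbold\Psibold)\diff\xbold : \Psibold\in\CCCcomp{\infty}[\Omega,\Tcal^{\ell+1}(\RR^d)],\ \norm{\Psibold}_\infty\leq 1\right\}=\norm{\Mbold\grad\otimes g_n}_{\Mvarcal},
\]
the last equality being exactly \cref{def: total directional variation order 1}. Consequently $\norm{\Mbold\grad\otimes g_n}_{\Mvarcal}\leq 1/n\to 0$, and in particular each $g_n$ lies in $\BDVM{}{}[\Omega,\Mbold,\Tcal^\ell(\RR^d)]$.

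Finally I would invoke the Sobolev-Korn inequality \cref{eq: sobolev-korn 1}. Because the chosen representative satisfies $Rg_n=0$, \cref{eq: sobolev-korn 1} gives $\norm{g_n}_{d/(d-1)}=\norm{g_n-Rg_n}_{d/(d-1)}\leq C\norm{\Mbold\grad\otimes g_n}_{\Mvarcal}\to 0$. Since $\norm{g_n}_{V^\ast}\leq\norm{g_n}_{d/(d-1)}$, this forces $\norm{g_n}_{V^\ast}\to 0$, contradicting $\norm{g_n}_{V^\ast}=1$. Hence $\bm{0}$ is interior to $\overline{U}$; in fact, unwinding the estimate shows $\overline{U}\supseteq\tfrac{1}{C}B_V(\bm 0)$ for the Sobolev-Korn constant $C$.

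The main obstacle is the functional-analytic bookkeeping of the dual space: correctly describing $V^\ast$ as a quotient of $\LL{d/(d-1)}$ by the finite-dimensional kernel, selecting a representative annihilated by the projection $R$ so that \cref{eq: sobolev-korn 1} is applicable, and matching the support function $\sup_{u\in U}\langle g_n,u\rangle$ with the weighted Radon norm $\norm{\Mbold\grad\otimes g_n}_{\Mvarcal}$ (which in turn requires that the supremum over compactly supported test fields coincides with the total directional variation, by the density underlying \cref{def: total directional variation order 1}). Once these identifications are secured, the Sobolev-Korn coercivity estimate closes the argument immediately, exactly as in the symmetric isotropic case of \cite{BreHol2014}.
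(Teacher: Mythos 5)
Your proof is correct, and it rests on exactly the same two ingredients as the paper's: the identification of the support function of $U$ with the weighted Radon norm $\norm{\Mbold\grad\otimes\blank}_\Mvarcal$ (in the paper this appears as the computation $F^\ast(\Psibold)=I_{\overline{U}}(\Psibold-R\Psibold)$), and the Sobolev--Korn estimate \cref{eq: sobolev-korn 1} applied to elements annihilated by the projection. The difference is in the packaging. The paper defines $F(\ubold)=\norm{\Mbold\grad\otimes\ubold}_\Mvarcal+I_{\{\bm{0}\}}(R^\ast\ubold)$ on $\LL{d/(d-1)}[\Omega,\Tcal^\ell(\RR^d)]$, proves its coercivity from \cref{eq: sobolev-korn 1}, and then cites \cite[Theorem 4.4.10]{borwein2010convex} to conclude that the Fenchel conjugate $F^\ast$ is continuous at $\bm{0}$, which is precisely the interior-point claim. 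You instead unpack that abstract convex-analysis step into an explicit Hahn--Banach separation of a putative sequence $v_n\to\bm{0}$, $v_n\notin\overline{U}$, from the closed convex balanced set $\overline{U}$, and run the same coercivity estimate on the normalised separating functionals. This buys a self-contained argument (no external citation) and even an explicit radius $1/C$ for the ball contained in $\overline{U}$, at the cost of the extra bookkeeping of identifying the dual of $\LL{d}[\Omega,\Tcal^\ell(\RR^d)]\cap\ker(\Mbold\grad)^\perp$ with $\LL{d/(d-1)}[\Omega,\Tcal^\ell(\RR^d)]/\ker(\Mbold\grad)$ and of selecting the representative killed by the projection --- which you handle correctly, up to the harmless notational slip of writing $Rg_n=0$ where the projection acting on $\LL{d/(d-1)}$ is $R^\ast$ (the paper itself is loose on the same point when invoking \cref{eq: sobolev-korn 1}). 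Both routes also tacitly use the density fact that the supremum over compactly supported smooth test fields computes the Radon norm of the distributional weighted gradient, and that finiteness of this supremum places $g_n$ in $\BDVM{}{}[\Omega,\Mbold,\Tcal^\ell(\RR^d)]$ so that \cref{eq: sobolev-korn 1} applies; you flag this explicitly, which is a point the paper leaves implicit.
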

\begin{proof}
We have to check the functional $F : \LL{d/(d-1)}[\Omega,\Tcal^\ell(\RR^d)]\to[0,\infty]$ is coercive:
\[
F(\ubold) = \norm{\Mbold\grad \otimes \ubold}_\Mvarcal + I_{\{\bm{0}\}}(R^\ast \ubold),
\]
where $R$ is the continuous projection map defined in \cref{eq: R map} and $I_Z$ is the indicator function of this set, i.e.\ $I_Z(\xbold) = 0$ if $\xbold\in Z$ and $I_Z(\xbold) = \infty$ otherwise.

Let $(\ubold_j)_j\in\LL{d/(d-1)}[\Omega,\Tcal^\ell(\RR^d)]$ with $\norm{\ubold_j}_{d/(d-1)}\to \infty$. 
We can distinguish two cases: either $F(\ubold_j) = \infty$ or $F(\ubold_j)<\infty$, which is the case for $\ubold_j\in \BDVM{}{}[\Omega,\Mbold,\Tcal^\ell(\RR^d)]\cap \ker(R^\ast)$. 

When $F(\ubold_j)<\infty$,
then $R^\ast \ubold_j = 0$ and the Sobolev-Korn inequality in \cref{eq: sobolev-korn 1} gives
\[
\norm{\ubold_j}_{d/(d-1)} \leq C \norm{\Mbold\grad \otimes \ubold_j}_\Mvarcal = C F(\ubold_j),
\]
for a constant $C>0$, independently of $j$. 
This means that $F(\ubold_j)\to \infty$ and the coercivity is proved. 
Thus, the Fenchel conjugate of $F$
\[
F^\ast:\LL{d}[\Omega,\Tcal^\ell(\RR^d)]\to ]-\infty,\infty]
\]
is continuous at $\bm{0}$ \cite[Theorem 4.4.10]{borwein2010convex}. Since $\ker(R^\ast)=\Im(\Ibold-R^\ast)$ we have
\[
\begin{aligned}
F^\ast(\Psibold) 
&= 
\sup_{\ubold\in\ker(R^\ast)} \langle \Psibold,\, \ubold \rangle - \norm{\Mbold\grad \otimes \ubold}_\Mvarcal \\
&=
\sup_{ \ubold\in \LL{d/(d-1)}[\Omega,\Tcal^\ell(\RR^d)]} \langle \Psibold,\,  \ubold- R^\ast  \ubold \rangle - \norm{\Mbold\grad \otimes ( \ubold-R^\ast  \ubold)}_\Mvarcal \\
&=
\sup_{\ubold \in \LL{d/(d-1)}[\Omega,\Tcal^\ell(\RR^d)]} 
\langle \Psibold - R\Psibold, \, \ubold \rangle - \norm{\Mbold\grad \otimes  \ubold}_\Mvarcal \\
&= I_U^{\ast\ast} (\Psibold -R\Psibold)\\
&= I_{\overline{U}}(\Psibold -R\Psibold).
\end{aligned}
\]
The continuity in $\bm{0}$ implies that there exists $\varepsilon>0$ such that the anisotropic ball $B_{\Mbold,\varepsilon}$ induced by $\Mbold$, is such that $B_{\Mbold,\varepsilon}(\bm{0})\subset (I-R)^{-1}(\overline{U})$. Thus, for each $\Psibold\in\LL{d}[\Omega,\Tcal^\ell(\RR^d)]\cap\ker(\Mbold\grad)^\perp$ with $\norm{\Psibold}_d \leq \varepsilon$, we have $\Psibold = \Psibold-R\Psibold\in \overline{U}$, showing that $\bm{0}$ is an interior point.
\end{proof}

We can now prove that a distribution $ \ubold$ is in $\BDVM{}{}[\Omega,\Mbold,\Tcal^\ell(\RR^d)]$ as soon as the weighted derivative $\Mbold\grad$ is a Radon measure, similarly to \cite[Theorem 2.6]{BreHol2014}.
\begin{theorem} \label{th: 2.6}
Let $\Omega\in\RR^d$ be a bounded Lipschitz domain and $\ubold\in\Dcal'(\Omega,\Tcal^\ell(\RR^d))$ be a distribution such that $\Mbold\grad \otimes \ubold\in\Mvarcal\left(\Omega,\Tcal^{\ell+1}(\RR^d)\right)$ in the distributional sense, 
for a positive definite field $\Mbold\in\CCCspace{\infty}[\Omega,\Tcal^2(\RR^d)]$.  Then, $\ubold\in\BDVM{}{}[\Omega,\Mbold,\Tcal^\ell(\RR^d)]$.
\end{theorem}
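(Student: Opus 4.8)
The plan is to establish the only missing ingredient for membership in $\BDVM{}{}[\Omega,\Mbold,\Tcal^\ell(\RR^d)]$: since $\Mbold\grad\otimes\ubold\in\Mvarcal(\Omega,\Tcal^{\ell+1}(\RR^d))$ is assumed, it remains to upgrade the distribution $\ubold$ to an $\LL{1}[\Omega,\Tcal^\ell(\RR^d)]$ function. Because $\Omega$ is bounded, $\LL{d/(d-1)}\hookrightarrow\LL{1}$, so it suffices to produce a constant $C_0>0$ with $\abs{\langle\ubold,\fbold\rangle}\leq C_0\norm{\fbold}_{d}$ for all $\fbold$ in the dense subspace $\CCCcomp{\infty}[\Omega,\Tcal^\ell(\RR^d)]$ of $\LL{d}$; the Riesz representation theorem then identifies $\ubold$ with an element of $\LL{d/(d-1)}[\Omega,\Tcal^\ell(\RR^d)]$. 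The starting estimate comes from the weak-derivative identity: setting $C_0=\norm{\Mbold\grad\otimes\ubold}_\Mvarcal$, for every $\Psibold\in\CCCcomp{\infty}[\Omega,\Tcal^{\ell+1}(\RR^d)]$ one has $\langle\ubold,-\div_\Mbold\Psibold\rangle=\langle\Mbold\grad\otimes\ubold,\Psibold\rangle$, whence $\abs{\langle\ubold,-\div_\Mbold\Psibold\rangle}\leq C_0\norm{\Psibold}_\infty$. In particular $\ubold$ is bounded by $C_0$ on the set $U$ of \cref{lem: A1}.

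First I would isolate the kernel using the projection $R$ of \cref{prop: beginning appendix A bre-hol}. The kernel $\ker(\Mbold\grad)=\ker(\grad)$ consists of constant tensor fields, hence is finite dimensional and contained in $\LL{\infty}[\Omega,\Tcal^\ell(\RR^d)]\subset\LL{1}[\Omega,\Tcal^\ell(\RR^d)]$ (as $\Omega$ is bounded); such a component is harmless for $\LL{1}$-membership and can be absorbed at the very end. The substantive estimate is therefore needed only on the complement $\ker(\Mbold\grad)^\perp$, on which the admissible test directions are generated by the elements $-\div_\Mbold\Psibold$ of $U$.

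The heart of the argument is the duality furnished by \cref{lem: A1}: since $\bm0$ is an interior point of the closure of $U$ in $\LL{d}[\Omega,\Tcal^\ell(\RR^d)]\cap\ker(\Mbold\grad)^\perp$, there is $\varepsilon>0$ with $B_{\Mbold,\varepsilon}(\bm0)\cap\ker(\Mbold\grad)^\perp\subset\overline{U}$. Reading the a priori bound $\langle\ubold,w\rangle\leq C_0$ for $w\in U$ as a polar/support-functional statement, this forces $\abs{\langle\ubold,\gbold\rangle}\leq (C_0/\varepsilon)\norm{\gbold}_{d}$ for $\gbold\in\ker(\Mbold\grad)^\perp$: equivalently, writing an admissible test direction as $-\div_\Mbold\Psibold$ and selecting a representative potential $\Psibold$ with $\norm{\Psibold}_\infty\lesssim\norm{\div_\Mbold\Psibold}_{d}/\varepsilon$, the estimate $\abs{\langle\ubold,-\div_\Mbold\Psibold\rangle}\leq C_0\norm{\Psibold}_\infty$ turns into the desired $\LL{d}$-continuity. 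Combining the kernel and orthogonal parts gives $\ubold\in\LL{d/(d-1)}[\Omega,\Tcal^\ell(\RR^d)]+\ker(\Mbold\grad)\subset\LL{1}[\Omega,\Tcal^\ell(\RR^d)]$, and since $\Mbold\grad\otimes\ubold\in\Mvarcal$ by hypothesis, $\ubold\in\BDVM{}{}[\Omega,\Mbold,\Tcal^\ell(\RR^d)]$.

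The main obstacle is exactly the passage from ``$\langle\ubold,\cdot\rangle$ bounded by $C_0$ on $U$'' to boundedness on $\overline{U}$, equivalently the $\LL{d}$-continuity of the a priori merely distributional functional $\ubold$: the directions in $B_{\Mbold,\varepsilon}(\bm0)$ supplied by \cref{lem: A1} lie only in the \emph{closure} of $U$, and $\langle\ubold,\cdot\rangle$ is not yet known to be $\LL{d}$-continuous, so one cannot naively pass to the limit along an approximating sequence. I expect to break this circularity by regularisation: mollify $\ubold$ to $\ubold_\delta=\ubold\ast\rho_\delta$, use that $\grad\otimes\ubold=\Mbold^{-1}(\Mbold\grad\otimes\ubold)$ is again a Radon measure (since $\Mbold^{-1}$ is smooth and bounded) together with a Friedrichs-type commutator estimate to bound $\norm{\Mbold\grad\otimes\ubold_\delta}$ uniformly in $\delta$, apply the Sobolev--Korn inequality \cref{eq: sobolev-korn 1} to the smooth approximants $\ubold_\delta-R\ubold_\delta$ to obtain a uniform $\LL{d/(d-1)}$ bound, and extract a weak limit identified distributionally with $\ubold$ modulo a constant tensor field. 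This realises concretely the boundedness that \cref{lem: A1} predicts dually, and mirrors the proof of \cite[Theorem 2.6]{BreHol2014}.
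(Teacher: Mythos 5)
Your proposal reproduces the paper's architecture almost step for step: the reduction of the claim to $\ubold\in\LL{1}[\Omega,\Tcal^\ell(\RR^d)]$, the splitting into $\ker(\Mbold\grad)$ and $\ker(\Mbold\grad)^\perp$ via the projection of \cref{prop: beginning appendix A bre-hol}, the use of \cref{lem: A1} to place an anisotropic ball inside $\overline{U}$ and thereby obtain $\LL{d}$-boundedness of $\langle\ubold,\blank\rangle$ on the complement of the kernel, and the final decomposition $\ubold\in\LL{d/(d-1)}[\Omega,\Tcal^\ell(\RR^d)]+\ker(\Mbold\grad)\subset\LL{1}[\Omega,\Tcal^\ell(\RR^d)]$. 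You diverge only at the point you correctly identify as the crux: \cref{lem: A1} puts the ball inside the \emph{closure} of $U$, while the a priori bound $\abs{\langle\ubold,w\rangle}\leq C_0$ is available only on $U$ itself, and $\ubold$ is not yet known to be $\LL{d}$-continuous. The paper handles this by introducing the auxiliary set $K_1=\{\Psibold:\norm{\Psibold}_\infty\leq\delta^{-1},\ \norm{-\div_\Mbold\Psibold}_d\leq 1\}$, asserting that $\{-\div_\Mbold\Psibold:\Psibold\in K_1\}$ is dense in the unit ball of $X=\LL{d}[\Omega,\Tcal^\ell(\RR^d)]\cap\ker(\Mbold\grad)^\perp$, and extending $\ubold$ to $X^\ast$ and then, by Hahn--Banach, to an element $\vbold\in\LL{d/(d-1)}[\Omega,\Tcal^\ell(\RR^d)]$; you instead propose mollification, a Friedrichs-type commutator estimate, and the Sobolev--Korn inequality \cref{eq: sobolev-korn 1} to build the $\LL{d/(d-1)}$ representative directly.

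Your alternative is sound in the interior, and the observation that $\grad\otimes\ubold=\Mbold^{-1}(\Mbold\grad\otimes\ubold)$ is again a Radon measure is a genuine simplification (it does require $\sup_{\xbold\in\Omega}\norm{\Mbold(\xbold)^{-1}}<\infty$, i.e.\ uniform ellipticity, which the paper also uses in \cref{lem: poincare}). The gap is at the boundary: $\ubold\ast\rho_\delta$ is defined only on $\{\xbold:\dist(\xbold,\partial\Omega)>\delta\}$, so the mollification argument as sketched delivers $\ubold\in\LLloc{1}[\Omega,\Tcal^\ell(\RR^d)]$ but not $\ubold\in\LL{1}[\Omega,\Tcal^\ell(\RR^d)]$, and the constants in \cref{eq: sobolev-korn 1} applied on the shrinking subdomains are not obviously uniform in $\delta$. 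Upgrading to integrability up to the Lipschitz boundary requires either the inward-translation and partition-of-unity machinery of Anzellotti--Giaquinta-type density proofs, or precisely the duality/Hahn--Banach step you set out to avoid. Since the Lipschitz regularity of $\partial\Omega$ is exactly the hypothesis that makes the global statement true, this last step needs to be carried out explicitly for the proof to be complete.
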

\begin{proof}
Let $\ubold\in\Dcal'(\Omega,\Tcal^\ell(\RR^d))$ be such that $\Mbold\grad \otimes \ubold \in \Mvarcal\left(\Omega,\Tcal^{\ell+1}(\RR^d)\right)$ in the distributional sense. 
We need to prove that $\ubold\in\LL{1}[\Omega,\Tcal^{\ell+1}(\RR^d)]$.

Let $X=\LL{d}[\Omega,\Tcal^\ell(\RR^d)]\cap\ker(\Mbold\grad)^\perp$, which is a Banach space with the induced norm. 
Let $\delta>0$ and $U$ from \cref{lem: A1}, such that $B_{\Mbold,\delta}(\bm{0})$ exists and
$
B_{\Mbold,\delta}(\bm{0})
\subset 
\overline{U}
\subset X
$.
We define also the following sets:
\[
\begin{aligned}
K_1 
&= 
\left\{
\Psibold \in \CCCcomp{\infty}[\Omega,\Tcal^{\ell+1}(\RR^d)]\,\lvert\, \norm{\Psibold}_\infty\leq \delta^{-1},\,\norm{-\div_\Mbold\Psibold}_d\leq 1
\right\},\\
K_2 &= 
\left\{
\Psibold \in \CCCcomp{\infty}[\Omega,\Tcal^{\ell+1}(\RR^d)]\,\lvert\, \norm{\Psibold}_\infty\leq \delta^{-1}
\right\}.
\end{aligned}
\]
Straightforwardly, we have $K_1\subset K_2$. By testing $\ubold$ with $-\div_\Mbold\Psibold$ and $\Psibold \in K_1$, since $\Mbold\grad \otimes \ubold\in \Mvarcal\left(\Omega,\Tcal^{\ell+1}(\RR^d)\right)$, we get by density
\[
\begin{aligned}
\sup_{\Psibold\in K_1} \langle \ubold, -\div_\Mbold\Psibold \rangle
&
\leq
\sup_{\Psibold\in K_2} \langle \ubold, -\div_\Mbold\Psibold \rangle
=
\sup_{\substack{\Psibold\in \CCospace{}[\Omega,\Tcal^{\ell+1}(\RR^d)]\\ \norm{\Psibold}_\infty\leq\delta^{-1}}} \langle \Mbold\grad \otimes \ubold, \Psibold \rangle 
\\
&
=
\delta^{-1}\norm{\Mbold\grad \otimes \ubold}_{\Mvarcal}.
\end{aligned}
\]
One  can  show  that
$
\overline{\{-\div_\Mbold \Psibold \,\lvert\, \Psibold\in K_1 \}} = \overline{B_{\Mbold,1}(\bm{0})}\in X
$
and thus
\[
\sup_{\Psibold\in K_1} \langle \ubold, -\div_\Mbold\Psibold \rangle = \norm{\ubold}_{X^\ast},
\]
i.e.\ $\ubold$ can be extended to an element in $X^\ast$. 
Also, $X$ is a closed subspace of $\LL{d}[\Omega,\Tcal^\ell(\RR^d)]$ and by Hahn-Banach theorem $\ubold$ can be extended to $\vbold\in\LL{d}[\Omega,\Tcal^\ell(\RR^d)]^\ast =\LL{d/(d-1)}[\Omega,\Tcal^\ell(\RR^d)]$. 
Thus $\vbold\in\LL{1}[\Omega,\Tcal^\ell(\RR^d)]$ with the distribution $\ubold-\vbold\in\ker(\Mbold\grad)$ and we have
\[
\langle \ubold-\vbold,\,-\div_\Mbold\Psibold\rangle = \langle \ubold-\ubold,\,-\div_\Mbold\Psibold\rangle = \bm{0},
\quad\text{for each }\quad
\Psibold\in\CCCcomp{\infty}[\Omega,\Tcal^{\ell+1}(\RR^d)],
\]  
since $-\div_\Mbold \Psibold\in X$: so $\ubold-\vbold$ is a polynomial of degree less than $\ell$, $(\ubold-\vbold)\in\LL{1}[\Omega,\Tcal^\ell(\RR^d)]$ and $\ubold=\vbold+(\ubold-\vbold)\in\LL{1}[\Omega,\Tcal^\ell(\RR^d)]$, leading to $\ubold\in\BDVM{}{}[\Omega,\Mbold,\Tcal^\ell(\RR^d)]$.
\end{proof}

\subsection{Higher-order derivatives}
When $\Qrm$ order of derivatives are involved, then we deal with the collection of tensor fields $\Mcal=(\Mbold_j)_{j=1}^\Qrm$.
For a distribution $\ubold\in\Dcal'\left(\Omega,\Tcal^\ell(\RR^d)\right)$ we get from \cref{th: 2.6}:
\[
\Mbold_\Qrm\grad\otimes\dots\otimes\Mbold_1\grad \otimes \ubold 
\in
\Mvarcal(\Omega,\Tcal^{\ell+\Qrm}(\RR^d)),
\]
which implies
\[
\Mbold_{\Qrm-1}\grad\otimes\dots\otimes\Mbold_1\grad \otimes \ubold\in\BDVM{}{}[\Omega,\Mbold_\Qrm,\Tcal^{\ell+\Qrm-1}(\RR^d)],
\]
thus we have 
\[
\norm{\Mbold_{\Qrm}\grad\otimes\dots\otimes\Mbold_1\grad\otimes \ubold}_{\Mvarcal} 
= 
\TDVM{1,\ell+\Qrm-1}{}(\Mbold_{\Qrm-1}\grad\otimes \dots \otimes \Mbold_1\grad\otimes \ubold, \Mbold_\Qrm).
\]

\begin{definition}\label{def: total directional variation order Q}
The \emph{total directional variation of order $\Qrm$} of a $\Tcal^\ell(\RR^d)$-valued function $\ubold$ w.r.t.\ the collection of fields $\Mcal$ is defined as the Radon norm of $\Mbold_\Qrm\grad\otimes\dots\otimes\Mbold_1\grad\otimes \ubold$ and indicated as:
\begin{equation}
\TDVM{\Qrm,\ell}{\bm{1}}[\ubold,\Mcal] 
= 
\norm{\Mbold_\Qrm\grad\otimes\dots\otimes \Mbold_1\grad\otimes \ubold}_{\Mvarcal(\Omega,\Tcal^{\ell+\Qrm}(\RR^d))}.
\end{equation}
\end{definition}

\begin{definition}
Let $\Omega\subset\RR^d$ be a bounded Lipschitz domain and $\Mcal=(\Mbold_j)_{j=1}^\Qrm$ be a collection of smooth tensor fields such that $\Mbold_j\in\CCCspace{\infty}[\Omega,\Tcal^2(\RR^d)]$ for each $j=1,\dots,\Qrm$.
The space of $\Tcal^\ell(\RR^d)$-valued tensor functions $\ubold$ of \emph{bounded directional variation of order $\Qrm$} with respect to the collection of fields $\Mcal$ is defined as 
\begin{small}
\[
\BDVM{\Qrm}{}[\Omega,\Mcal,\Tcal^\ell(\RR^d)] 
= 
\left\{
\ubold\in\LL{1}[\Omega,\Tcal^\ell(\RR^d)]\,\Big\lvert\, \Mbold_\Qrm\grad\otimes\dots\otimes\Mbold_1\grad\otimes \ubold\in\Mvarcal(\Omega,\Tcal^{\ell+\Qrm}(\RR^d))
\right\}.
\]
\end{small}
\end{definition}

In particular, the spaces are nested and the larger is $\Qrm$, the smaller is the space. The space $\BDVM{\Qrm}{}[\Omega,\Mcal,\Tcal^\ell(\RR^d)]$ is endowed with the following norm:
\[
\norm{\ubold}_{\BDVM{\Qrm}{}}
= 
\norm{\ubold}_1 + \norm{\Mbold_\Qrm\grad\otimes\dots\otimes\Mbold_1\grad\otimes \ubold}_\Mvarcal.
\]

\begin{remark}
For fixed $\ell,\Qrm$ and by changing the weights $\alphabold$, $\TDVM{\Qrm,\ell}{\alphabold}$ yields equivalent norms and hence the same space. Thus, we can omit the weights in $\BDVM{\Qrm}{}[\Omega,\Mcal,\Tcal^\ell(\RR^d)]$.\end{remark}

\subsection{Properties}

\begin{proposition}\label{prop: 3.3 brehol}
Given $\Mcal=(\Mbold_j)_{j=1}^\Qrm$, $\TDVM{\Qrm,\ell}{\alphabold}(\blank,\Mcal)$ is a continuous semi-norm on $\BDVM{\Qrm}{}[\Omega,\Mcal,\Tcal^\ell(\RR^d)]$ with finite-dimensional kernel $\ker(\Mbold_\Qrm\grad\otimes\dots\otimes\Mbold_1\grad)$.
\end{proposition}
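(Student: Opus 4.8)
The plan is to recast the supremum defining $\TDVM{\Qrm,\ell}{\alphabold}[\ubold,\Mcal]$ as a supremum of linear functionals paired against the vector measure $\mu:=\Mbold_\Qrm\grad\otimes\dots\otimes\Mbold_1\grad\otimes\ubold\in\Mvarcal(\Omega,\Tcal^{\ell+\Qrm}(\RR^d))$, which exists precisely because $\ubold\in\BDVM{\Qrm}{}[\Omega,\Mcal,\Tcal^\ell(\RR^d)]$. Iterating the integration-by-parts identity of \cref{lem: divergence modified} (the boundary terms vanish since each admissible $\Psibold$ has compact support) peels off one weighted gradient at a time and yields $\int_\Omega\ubold\cdot\div_\Mcal^\Qrm\Psibold\diff\xbold=(-1)^\Qrm\langle\mu,\Psibold\rangle$ for every $\Psibold\in\CCCcomp{\Qrm}[\Omega,\Tcal^{\ell+\Qrm}(\RR^d)]$; this is exactly the defining property of the weak weighted derivative, so it passes from smooth $\ubold$ to all of $\BDVM{\Qrm}{}$ by density. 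Because each operator $\div_\Mcal^j$ is linear, the admissible set $\Ycal_{\Mcal,\alphabold}^{\Qrm,\ell}$ is symmetric under $\Psibold\mapsto-\Psibold$, so the sign $(-1)^\Qrm$ is immaterial and $\TDVM{\Qrm,\ell}{\alphabold}[\ubold,\Mcal]=\sup\{\langle\mu,\Psibold\rangle:\Psibold\in\Ycal_{\Mcal,\alphabold}^{\Qrm,\ell}\}$.

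From this representation the semi-norm property is immediate: a supremum of the linear functionals $\ubold\mapsto\langle\mu,\Psibold\rangle$ over the balanced set $\Ycal_{\Mcal,\alphabold}^{\Qrm,\ell}$ is nonnegative, convex and positively homogeneous, and the symmetry of the admissible set upgrades positive to absolute homogeneity; subadditivity is the elementary inequality $\sup(a+b)\le\sup a+\sup b$ applied to the integrand, which is linear in $\ubold$. For continuity I would use that every admissible $\Psibold$ satisfies $\norm{\Psibold}_\infty=\norm{\div_\Mcal^0\Psibold}_\infty\le\alpha_0$, so that $\langle\mu,\Psibold\rangle\le\alpha_0\norm{\mu}_\Mvarcal\le\alpha_0\norm{\ubold}_{\BDVM{\Qrm}{}}$; taking the supremum gives $\TDVM{\Qrm,\ell}{\alphabold}[\ubold,\Mcal]\le\alpha_0\norm{\ubold}_{\BDVM{\Qrm}{}}$, and Lipschitz continuity then follows from the reverse triangle inequality for semi-norms.

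Next I would identify the kernel as $\ker(\Mbold_\Qrm\grad\otimes\dots\otimes\Mbold_1\grad)$, i.e.\ as $\{\ubold:\mu=0\}$. One inclusion is the bound just proved. For the converse, if $\TDVM{\Qrm,\ell}{\alphabold}[\ubold,\Mcal]=0$ then $\langle\mu,\Psibold\rangle\le0$ for all $\Psibold\in\Ycal_{\Mcal,\alphabold}^{\Qrm,\ell}$, and symmetry of the admissible set forces $\langle\mu,\Psibold\rangle=0$ there. Since the weights are smooth, each $\div_\Mcal^j$ is bounded from $\CCCspace{\Qrm}$ into $\CCCspace{0}$, so $\Ycal_{\Mcal,\alphabold}^{\Qrm,\ell}$ contains a $\CCCspace{\Qrm}$-ball around the origin; by homogeneity $\langle\mu,\Psibold\rangle=0$ for every $\Psibold\in\CCCcomp{\Qrm}[\Omega,\Tcal^{\ell+\Qrm}(\RR^d)]$, whence $\mu=0$ as a Radon measure.

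Finally, finite-dimensionality I would obtain by induction on $\Qrm$. For $\Qrm=1$ the kernel is $\ker(\Mbold_1\grad)=\ker(\grad)$ (the weights are everywhere invertible, as already noted), namely the constant fields, of dimension $\dim\Tcal^\ell(\RR^d)$. For the step, write the order-$\Qrm$ operator as $\Mbold_\Qrm\grad\otimes(\blank)$ composed with the order-$(\Qrm-1)$ weighted gradient $A:=\Mbold_{\Qrm-1}\grad\otimes\dots\otimes\Mbold_1\grad$; invertibility of $\Mbold_\Qrm$ gives $\mu=0\iff\grad\otimes(A\ubold)=0\iff A\ubold\in\ker(\grad)$, so $\ker(\Mbold_\Qrm\grad\otimes\dots\otimes\Mbold_1\grad)=A^{-1}(\ker(\grad))$ is the preimage of the finite-dimensional space of constant $\Tcal^{\ell+\Qrm-1}(\RR^d)$-fields. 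Choosing finitely many preimages of a basis of $\ker(\grad)\cap\Im(A)$ shows this preimage is contained in $\ker(A)$ plus a finite-dimensional space, and $\ker(A)$ is finite-dimensional by the induction hypothesis. I expect the main technical obstacle to be the careful iterated integration by parts and density passage that establishes the measure representation for general $\ubold\in\BDVM{\Qrm}{}$; note also that, because the weights vary in space, the kernel is in general strictly smaller than the polynomials of degree below $\Qrm$, so finiteness must be extracted recursively rather than by a direct polynomial identification.
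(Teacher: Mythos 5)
Your proof is correct and follows essentially the same route as the paper's: the semi-norm, continuity and kernel identification all come from the linearity of the defining pairing in $\ubold$ together with the symmetry of the admissible set $\Ycal_{\Mcal,\alphabold}^{\Qrm,\ell}$ under $\Psibold\mapsto-\Psibold$, and finite-dimensionality of $\ker(\Mbold_\Qrm\grad\otimes\dots\otimes\Mbold_1\grad)$ is obtained by composing operators with finite-dimensional kernels (the paper invokes \cref{rem: 4.5} tersely where you spell out the induction via the preimage bound $\dim A^{-1}(V)\leq\dim\ker(A)+\dim V$). One small remark: the identity $\int_\Omega\ubold\cdot\div_\Mcal^\Qrm\Psibold\diff\xbold=(-1)^\Qrm\langle\mu,\Psibold\rangle$ for $\ubold\in\BDVM{\Qrm}{}[\Omega,\Mcal,\Tcal^\ell(\RR^d)]$ holds directly by the definition of the distributional iterated weighted derivative (which is how your measure $\mu$ is defined in the first place), so the density/approximation step you invoke is unnecessary --- and in fact the paper only proves smooth approximation for the first-order space, so it is better not to rely on it for $\Qrm\geq 2$.
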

\begin{proof}
Positive homogeneity is ensured by definition of $\TDVM{\Qrm,\ell}{\alphabold}$: from the linearity of the integral we have
\[
\TDVM{\Qrm,\ell}{\alphabold}[\lambda \ubold,\Mcal] 
= 
\abs{\lambda}\TDVM{\Qrm,\ell}{\alphabold}[\ubold,\Mcal].
\]
For the triangular inequality, take $\ubold_1,\ubold_2\in\BDVM{\Qrm}{}[\Omega,\Mcal,\Tcal^\ell(\RR^d)]$ and let $\Psibold \in \Ycal_{\Mcal,\alphabold}^{\Qrm,\ell}$. Then:
\[
\TDVM{\Qrm,\ell}{\alphabold}[\ubold_1 + \ubold_2,\Mcal]
\leq 
\sup_{\Psibold}  \int_\Omega (\ubold_1 + \ubold_2)\cdot \div_{\Mcal}^\Qrm \left(\Psibold\right) \diff{\xbold}
\\
\leq
\TDVM{\Qrm,\ell}{\alphabold}[\ubold_1,\Mcal] + \TDVM{\Qrm,\ell}{\alphabold}[\ubold_2,\Mcal].
\]
For the continuity, let $\ubold_1,\ubold_2\in\BDVM{\Qrm}{}[\Omega,\Mcal,\Tcal^\ell(\RR^d)]$. Then it holds, exactly as in the $\BV{}{}$ case:
\[
\abs{\TDVM{\Qrm,\ell}{\alphabold}[\ubold_1,\Mcal]
-
\TDVM{\Qrm,\ell}{\alphabold}[\ubold_2,\Mcal]
}
\leq
\TDVM{\Qrm,\ell}{\alphabold}[\ubold_1-\ubold_2,\Mcal]
\leq
\norm{\ubold_1-\ubold_2}_{\BDVM{\Qrm}{}}.
\]
By definition of $\TDVM{\Qrm,\ell}{\alphabold}[\ubold_1,\Mcal]$, we have $\TDVM{\Qrm,\ell}{\alphabold}[\ubold_1,\Mcal]=0$ 
if and only if
\[
\int_\Omega \ubold \cdot \div_\Mcal^\Qrm \Psibold \diff \xbold = 0,
\quad 
\text{for each }
\Psibold \in\CCCcomp{\Qrm}[\Omega,\Tcal^{\ell+\Qrm}(\RR^d)],
\]
which is equivalent to $\ubold\in\ker(\Mbold_\Qrm\grad\otimes\dots\otimes\Mbold_1\grad)$ in the weak sense.
Therefore, $\TDVM{\Qrm,\ell}{\alphabold}$ is a semi-norm and $\BDVM{\Qrm}{}$ is a normed linear space.
From \cref{rem: 4.5}  $\ker(\Mbold_j\grad)$ on $\Dcal'(\Omega,\Tcal^{\ell+j}(\RR^d))$ has finite dimension for each $j=0,\dots,\Qrm-1$ then  $\ker(\Mbold_\Qrm\grad)$ considered on $\Dcal'(\Omega, \Tcal^{\ell+\Qrm}(\RR^d))$ is finite-dimensional and therefore $\ker(\Mbold_\Qrm\grad\otimes\dots\otimes\Mbold_1\grad)$ on $\Dcal'(\Omega, \Tcal^{\ell}(\RR^d))$ is finite-dimensional.
\end{proof}

\begin{proposition}\label{prop: convexity-lsc TDV}
$\TDVM{\Qrm,\ell}{\alphabold}(\blank,\Mcal)$ is convex and lower semi-continuous on $\BDVM{\Qrm}{}[\Omega,\Mcal,\Tcal^\ell(\RR^d)]$.
\end{proposition}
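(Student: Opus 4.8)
The plan is to exploit that $\TDVM{\Qrm,\ell}{\alphabold}(\blank,\Mcal)$ is, by \Cref{def: TDV}, a pointwise supremum over the admissible test set $\Ycal_{\Mcal,\alphabold}^{\Qrm,\ell}$ of the linear functionals
\[
L_\Psibold(\ubold) := \int_\Omega \ubold\cdot \div_{\Mcal}^\Qrm \Psibold \diff \xbold, \qquad \Psibold\in\Ycal_{\Mcal,\alphabold}^{\Qrm,\ell}.
\]
Both properties then follow from the standard stability of convexity and lower semi-continuity under suprema, once I record that each $L_\Psibold$ is linear in $\ubold$ and continuous for $\LL{1}$-convergence. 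The latter holds because, for fixed $\Psibold\in\CCCcomp{\Qrm}[\Omega,\Tcal^{\ell+\Qrm}(\RR^d)]$, the field $\div_{\Mcal}^\Qrm\Psibold$ is continuous with compact support, hence bounded, so $\div_{\Mcal}^\Qrm\Psibold\in\LL{\infty}[\Omega,\Tcal^\ell(\RR^d)]$ and $L_\Psibold$ is a bounded linear functional on $\LL{1}[\Omega,\Tcal^\ell(\RR^d)]$.

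For convexity, I would fix $\ubold_1,\ubold_2\in\BDVM{\Qrm}{}[\Omega,\Mcal,\Tcal^\ell(\RR^d)]$ and $\lambda\in[0,1]$. By linearity of each $L_\Psibold$,
\[
L_\Psibold(\lambda\ubold_1+(1-\lambda)\ubold_2) = \lambda L_\Psibold(\ubold_1) + (1-\lambda) L_\Psibold(\ubold_2) \leq \lambda \TDVM{\Qrm,\ell}{\alphabold}[\ubold_1,\Mcal] + (1-\lambda)\TDVM{\Qrm,\ell}{\alphabold}[\ubold_2,\Mcal],
\]
and taking the supremum over $\Psibold\in\Ycal_{\Mcal,\alphabold}^{\Qrm,\ell}$ on the left-hand side yields convexity. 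Equivalently, this is immediate from the semi-norm property already established in \Cref{prop: 3.3 brehol}, via subadditivity and positive homogeneity.

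For lower semi-continuity, I would take a sequence $\ubold_n\to\ubold$ in $\LL{1}[\Omega,\Tcal^\ell(\RR^d)]$; since convergence in the $\BDVM{\Qrm}{}$-norm implies $\LL{1}$-convergence, establishing the estimate for this weaker topology yields lower semi-continuity on $\BDVM{\Qrm}{}$ as well. For each fixed $\Psibold\in\Ycal_{\Mcal,\alphabold}^{\Qrm,\ell}$, continuity of $L_\Psibold$ together with $L_\Psibold(\ubold_n)\leq \TDVM{\Qrm,\ell}{\alphabold}[\ubold_n,\Mcal]$ gives
\[
L_\Psibold(\ubold) = \lim_{n\to\infty} L_\Psibold(\ubold_n) \leq \liminf_{n\to\infty}\TDVM{\Qrm,\ell}{\alphabold}[\ubold_n,\Mcal].
\]
Taking the supremum over admissible $\Psibold$ on the left gives $\TDVM{\Qrm,\ell}{\alphabold}[\ubold,\Mcal]\leq \liminf_{n\to\infty} \TDVM{\Qrm,\ell}{\alphabold}[\ubold_n,\Mcal]$, which is precisely lower semi-continuity. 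I do not expect a genuine obstacle here: the only point requiring any care is the passage to the limit inside $L_\Psibold$, which is legitimate exactly because $\div_{\Mcal}^\Qrm\Psibold$ is bounded, and the remainder is the general principle that a supremum of continuous affine functionals is both convex and lower semi-continuous.
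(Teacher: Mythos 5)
Your proof is correct and follows essentially the same route as the paper: convexity via the supremum of linear functionals (splitting the sup over the two terms), and lower semi-continuity by passing to the limit in $\int_\Omega \ubold_n\cdot\div_\Mcal^\Qrm\Psibold\diff\xbold$ for each fixed admissible $\Psibold$ and then taking the supremum. Your explicit justification that $\div_\Mcal^\Qrm\Psibold\in\LL{\infty}$ (so that each $L_{\Psibold}$ is $\LL{1}$-continuous) is a detail the paper leaves implicit, but the argument is the same.
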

\begin{proof}
Fix $\Qrm,\ell\in\NN$, let $\Mcal$ be a collection of fields in $\Tcal^2(\RR^d)$ and let $\Psibold \in \Ycal_{\Mcal,\alphabold}^{\Qrm,\ell}$. 
Then for any $\Mcal$ and $\alphabold$ we take $\ubold_1,\, \ubold_2\in\LL{1}[\Omega,\Tcal^\ell(\RR^d)]$ and $t\in [0,1]$. Thus
\[
\begin{aligned}
\TDVM{\Qrm,\ell}{\alphabold}[t \ubold_1 + (1-t) \ubold_2,\Mcal] 
&= 
\sup_{\Psibold} \int_\Omega \left( t \ubold_1 + (1-t) \ubold_2 \right) \cdot \div_\Mcal^\Qrm \left(\Psibold \right) \diff \xbold
\\
&\leq 
t \sup_{\Psibold}  \int_\Omega \ubold_1 \cdot \div_\Mcal^\Qrm \left(\Psibold \right) \diff \xbold + 
(1-t)\sup_{\Psibold} \int_\Omega \ubold_2 \cdot \div_\Mcal^\Qrm\left(\Psibold \right) \diff \xbold\\
&= t \TDVM{\Qrm,\ell}{\alphabold}[\ubold_1,\Mcal] + (1-t)\TDVM{\Qrm,\ell}{\alphabold}[\ubold_2,\Mcal].
\end{aligned}
\] 
Hence $\TDVM{\Qrm,\ell}{\alphabold}$ is convex.
For the lower semi-continuity, let $(\ubold_j)_{j\in\NN}$ be a Cauchy sequence in $\BDVM{\Qrm}{}[\Omega,\Mcal,\Tcal^\ell(\RR^d)]$ such that $\ubold_j\to \ubold\in\LL{1}[\Omega,\Tcal^\ell(\RR^d)]$. 
From the definition of $\TDVM{\Qrm,\ell}{\alphabold}$, we have:
\[
\int_\Omega 
\ubold\cdot\div_\Mcal^\Qrm \Psibold
\diff\xbold 
= 
\lim_{j\to\infty}
\int_\Omega 
\ubold_j \cdot \div_\Mcal^\Qrm\Psibold
\diff\xbold 
\\
\leq  
\liminf_{j\to\infty} 
\TDVM{\Qrm,\ell}{\alphabold}[\ubold_j,\Mcal],
\quad
\text{for any}\quad\Psibold\in\Ycal_{\Mcal,\alphabold}^{\Qrm,\ell}.
\]
Then, taking the supremum we have
$
\TDVM{\Qrm,\ell}{\alphabold}[\ubold,\Mcal] 
\leq  
\liminf_{j\to\infty} \TDVM{\Qrm,\ell}{\alphabold}[\ubold_j,\Mcal].
$
\end{proof}

Similarly to \cite[Proposition 3.5]{BreKunPoc2010}, the space $\BDVM{\Qrm}{}[\Omega,\Mcal,\Tcal^\ell(\RR^d)]$ is a Banach space when equipped with a suitable norm:
\begin{proposition}$\BDVM{\Qrm}{}[\Omega,\Mcal,\Tcal^\ell(\RR^d)]$ endowed with the norm
\[
\norm{\ubold}_{\BDVM{\Qrm}{}[\Omega,\Mcal,\Tcal^\ell(\RR^d)]} 
= 
\norm{\ubold}_{\LL{1}[\Omega,\Tcal^\ell(\RR^d)]}  
+ 
\TDVM{\Qrm,\ell}{\alphabold}[\ubold,\Mcal].
\]  
is a Banach space.
\end{proposition}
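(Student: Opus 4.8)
The plan is to verify first that the displayed expression is a genuine norm and then to establish completeness by the standard two-step argument for $\BV{}$-type spaces. For the norm property, positive homogeneity, the triangle inequality, and continuity of $\TDVM{\Qrm,\ell}{\alphabold}(\blank,\Mcal)$ are already recorded in \cref{prop: 3.3 brehol}; since $\norm{\blank}_{\LL{1}[\Omega,\Tcal^\ell(\RR^d)]}$ is itself a norm, the only additional point is definiteness, which is immediate because $\norm{\ubold}_{\BDVM{\Qrm}{}[\Omega,\Mcal,\Tcal^\ell(\RR^d)]}=0$ forces $\norm{\ubold}_{\LL{1}[\Omega,\Tcal^\ell(\RR^d)]}=0$, hence $\ubold=0$ already in $\LL{1}[\Omega,\Tcal^\ell(\RR^d)]$.

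The core is completeness. Let $(\ubold_n)_{n\in\NN}$ be a Cauchy sequence in $\BDVM{\Qrm}{}[\Omega,\Mcal,\Tcal^\ell(\RR^d)]$. By definition of the norm, $(\ubold_n)_{n\in\NN}$ is Cauchy in $\LL{1}[\Omega,\Tcal^\ell(\RR^d)]$ and $(\Mbold_\Qrm\grad\otimes\dots\otimes\Mbold_1\grad\otimes\ubold_n)_{n\in\NN}$ is Cauchy in $\Mvarcal(\Omega,\Tcal^{\ell+\Qrm}(\RR^d))$. Since $\LL{1}[\Omega,\Tcal^\ell(\RR^d)]$ is complete and the space of Radon measures is complete in the total variation norm (being the dual of $\CCospace{}[\Omega,\Tcal^{\ell+\Qrm}(\RR^d)]$), there exist $\ubold\in\LL{1}[\Omega,\Tcal^\ell(\RR^d)]$ and $\mu\in\Mvarcal(\Omega,\Tcal^{\ell+\Qrm}(\RR^d))$ with $\ubold_n\to\ubold$ in $\LL{1}[\Omega,\Tcal^\ell(\RR^d)]$ and $\Mbold_\Qrm\grad\otimes\dots\otimes\Mbold_1\grad\otimes\ubold_n\to\mu$ in the Radon norm.

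It remains to identify $\mu$ with the higher-order weighted derivative of $\ubold$. I would argue directly in the distributional sense: for every $\Psibold\in\CCCcomp{\Qrm}[\Omega,\Tcal^{\ell+\Qrm}(\RR^d)]$, iterating the integration by parts formula of \cref{lem: divergence modified} $\Qrm$ times (the boundary terms vanishing by compact support) yields
\[
\int_\Omega (\Mbold_\Qrm\grad\otimes\dots\otimes\Mbold_1\grad\otimes\ubold_n)\cdot\Psibold \diff\xbold
=
(-1)^\Qrm\int_\Omega \ubold_n\cdot\div_\Mcal^\Qrm\Psibold \diff\xbold .
\]
Since $\div_\Mcal^\Qrm\Psibold$ is continuous with compact support, hence bounded, the right-hand side converges to $(-1)^\Qrm\int_\Omega \ubold\cdot\div_\Mcal^\Qrm\Psibold \diff\xbold$ by $\LL{1}$-convergence of $\ubold_n$, while the left-hand side converges to $\int_\Omega\mu\cdot\Psibold$ by Radon-norm convergence. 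Hence $\mu$ equals the distributional $\Qrm$-th weighted derivative of $\ubold$; equivalently, this is the closedness of the composed operator $\Mbold_\Qrm\grad\otimes\dots\otimes\Mbold_1\grad$, which could also be obtained by iterating \cref{prop: closedness of weighted derivative} through the intermediate tensor spaces. Because $\mu\in\Mvarcal(\Omega,\Tcal^{\ell+\Qrm}(\RR^d))$, we conclude $\ubold\in\BDVM{\Qrm}{}[\Omega,\Mcal,\Tcal^\ell(\RR^d)]$ with $\Mbold_\Qrm\grad\otimes\dots\otimes\Mbold_1\grad\otimes\ubold=\mu$.

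Combining the two convergences gives
\[
\norm{\ubold_n-\ubold}_{\BDVM{\Qrm}{}[\Omega,\Mcal,\Tcal^\ell(\RR^d)]}
=
\norm{\ubold_n-\ubold}_{\LL{1}[\Omega,\Tcal^\ell(\RR^d)]}
+
\norm{\Mbold_\Qrm\grad\otimes\dots\otimes\Mbold_1\grad\otimes(\ubold_n-\ubold)}_\Mvarcal
\to 0,
\]
so $\ubold_n\to\ubold$ in the norm and the space is complete. The main obstacle is precisely the identification step in the third paragraph: transferring the Radon-norm limit through the composed higher-order weighted gradient and certifying that the limit measure is the weighted derivative of the $\LL{1}$-limit. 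Everything else reduces to the completeness of $\LL{1}[\Omega,\Tcal^\ell(\RR^d)]$ and of the Radon measures together with the boundedness of the fixed test object $\div_\Mcal^\Qrm\Psibold$.
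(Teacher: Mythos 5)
Your overall strategy---completeness of $\LL{1}$ and of the Radon measures, followed by identification of the limit measure with the weighted derivative via closedness of the iterated gradient---is the standard route for first-order $\BV{}$-type spaces, and it is genuinely different from the paper's argument, which never passes through the space of measures: the paper extracts the $\LL{1}$ limit $\ubold$, uses the lower semi-continuity of $\TDVM{\Qrm,\ell}{\alphabold}(\blank,\Mcal)$ on $\LL{1}$ from \cref{prop: convexity-lsc TDV} to conclude $\TDVM{\Qrm,\ell}{\alphabold}[\ubold,\Mcal]<\infty$, and then applies lower semi-continuity a second time to the differences $\ubold_{j^\ast}-\ubold_j$ to obtain convergence in the stated norm.

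There is, however, a genuine gap in your version, located exactly where you place the ``main obstacle'' but one step earlier than you think: you assert that being Cauchy for $\norm{\blank}_1+\TDVM{\Qrm,\ell}{\alphabold}[\blank,\Mcal]$ makes $(\Mbold_\Qrm\grad\otimes\dots\otimes\Mbold_1\grad\otimes\ubold_n)_{n}$ Cauchy in $\Mvarcal$. This silently identifies the functional of \cref{def: TDV} with the Radon norm of the full $\Qrm$-th weighted derivative. The two coincide for $\Qrm=1$, but for $\Qrm\ge 2$ the supremum in \cref{def: TDV} is taken over the strictly smaller test class $\Ycal^{\Qrm,\ell}_{\Mcal,\alphabold}$, in which the intermediate divergences are also constrained, so only the inequality $\TDVM{\Qrm,\ell}{\alphabold}[\vbold,\Mcal]\le \alpha_0\norm{\Mbold_\Qrm\grad\otimes\dots\otimes\Mbold_1\grad\otimes\vbold}_\Mvarcal$ is available---the wrong direction for your argument. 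By \cref{prop: equivalence TDV} the functional is an infimal convolution, and exactly as for $\TGV{\alphabold}{2}$ versus $\abs{\mathrm{D}^2 u}(\Omega)$ it remains bounded along sequences whose top-order derivative has unbounded mass. Hence the Cauchy property of the measures does not follow, and the identification step of your third paragraph (which is itself fine) has nothing to act on. To repair the proof you must either argue through lower semi-continuity of the sup-based functional, as the paper does, or first prove an equivalence between $\TDVM{\Qrm,\ell}{\alphabold}(\blank,\Mcal)$ and the Radon norm of the iterated gradient on $\BDVM{\Qrm}{}[\Omega,\Mcal,\Tcal^\ell(\RR^d)]$, which the paper does not establish and which fails in the analogous TGV setting. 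As written, your argument does prove completeness for the norm $\norm{\ubold}_1+\norm{\Mbold_\Qrm\grad\otimes\dots\otimes\Mbold_1\grad\otimes\ubold}_\Mvarcal$ of \cref{def: total directional variation order Q}, but not for the norm in the statement.
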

\begin{proof}
We have already proved in \cref{prop: convexity-lsc TDV} the lower semi-continuity of $\TDVM{\Qrm,\ell}{\alphabold}$. 
As in \cite{BreKunPoc2010}, let $(\ubold_j)_{j\in\NN}$ be a Cauchy sequence in $\BDVM{\Qrm}{}[\Omega,\Mcal,\Tcal^\ell(\RR^d)]$. Then it is easy to see that $(\ubold_j)$ is a Cauchy sequence in $\LL{1}[\Omega,\Tcal^\ell(\RR^d)]$ and a limit $\ubold\in \LL{1}[\Omega,\Tcal^\ell(\RR^d)]$ exists. Thus, by lower semi-continuity we have:
\[
\TDVM{\Qrm,\ell}{\alphabold}[\ubold,\Mcal] 
\leq  
\liminf_{j\to\infty} \TDVM{\Qrm,\ell}{\alphabold}[\ubold_j,\Mcal].
\]
So, $\ubold\in\BDVM{\Qrm}{}[\Omega,\Mcal,\Tcal^\ell(\RR^d)]$ and we need only to check that $\ubold$ is the limit in the corresponding norm: being $(\ubold_j)_{j\in\NN}$ a Cauchy sequence, then we can choose $\varepsilon>0$ and an index $j^\ast$ such that for all $j>j^\ast$ we have
\[
\TDVM{\Qrm,\ell}{\alphabold}[\ubold_{j^\ast}-\ubold_{j},\Mcal]
\leq 
\varepsilon.
\]
Letting $j\to \infty$, the lower semi-continuity of $\TDVM{\Qrm,\ell}{\alphabold}$ on $\LL{1}[\Omega,\Tcal^\ell(\RR^d)]$ gives
\[
\TDVM{\Qrm,\ell}{\alphabold}[\ubold_{j^\ast}-\ubold,\Mcal]
\leq 
\liminf_{j\to\infty} \TDVM{\Qrm,\ell}{\alphabold}[\ubold_{j^\ast}-\ubold_j,\Mcal]\leq \varepsilon,
\]
and this implies that $\ubold_j\to \ubold$ in $\BDVM{\Qrm}{}[\Omega,\Mcal,\Tcal^\ell(\RR^d)]$.
\end{proof}

\section{Equivalent representation}\label{sec: equivalent}
We are going to interpret the dual definition of the regulariser $\TDVM{\Qrm,\ell}{\alphabold}[\ubold,\Mcal]$ in terms of iterated Fenchel duality
following the proof given in \cite{BreHol2014}.
Firstly, we prove the following preliminary result similarly to \cite[Lemma 3.4]{BreHol2014}.
\begin{lemma}\label{lem: 3.4}
Let $j\geq 1$ and let $\zbold_{j-1}\in \CCospace{j-1}[\Omega,\Tcal^{\ell+j-1}(\RR^d)]^\ast$, $\zbold_{j}\in \CCospace{j}[\Omega,\Tcal^{\ell+j}(\RR^d)]^\ast$ be distributions of order $j-1$ and $j$, respectively. 
Then
\begin{equation}
\norm{\Mbold_j\grad \zbold_{j-1}-\zbold_j}_{\Mvarcal} 
= 
\sup_{
\substack{\Psibold\in\CCCcomp{\infty}[\Omega,\Tcal^{\ell+j}(\RR^d)],\\ \norm{\Psibold}_\infty\leq 1}} 
\left\{ 
\langle \zbold_{j-1},\, \div_{\Mbold_j}\Psibold \rangle 
+
\langle \zbold_j,\,\Psibold \rangle
\right\},
\label{eq: sup lem 3.4}
\end{equation}
with the right-hand side being finite if and only if $\Mbold_j\grad\zbold_{j-1}-\zbold_j \in \Mvarcal\left(\Omega,\Tcal^{\ell+j}(\RR^d)\right)$ in the distributional sense.
\end{lemma}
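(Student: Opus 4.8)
The plan is to recognise the right-hand side of \eqref{eq: sup lem 3.4} as the Radon norm of the single distribution $\Mbold_j\grad\zbold_{j-1}-\zbold_j$; once this is done, both the claimed equality and the finiteness criterion follow from the Riesz representation theorem together with the density of $\CCCcomp{\infty}$ in $\CCospace{}$ for the supremum norm. The argument is the term-by-term anisotropic analogue of \cite[Lemma 3.4]{BreHol2014}.

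First I would record the distributional meaning of the weighted weak derivative. By the definition of the weak $\Mbold_j$-weighted derivative together with the integration-by-parts identity of \Cref{lem: divergence modified} (whose boundary term vanishes for compactly supported test fields), the distribution $\Mbold_j\grad\zbold_{j-1}$ acts on $\Psibold\in\CCCcomp{\infty}[\Omega,\Tcal^{\ell+j}(\RR^d)]$ by $\langle \Mbold_j\grad\zbold_{j-1},\,\Psibold\rangle = -\langle \zbold_{j-1},\,\div_{\Mbold_j}\Psibold\rangle$. This pairing is legitimate because $\div_{\Mbold_j}\Psibold$ is again a compactly supported smooth field, hence an admissible test object for the order-$(j-1)$ distribution $\zbold_{j-1}$. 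Consequently, for every admissible $\Psibold$ the summand inside the supremum rewrites as
\[
\langle \zbold_{j-1},\, \div_{\Mbold_j}\Psibold \rangle + \langle \zbold_j,\,\Psibold \rangle = \langle \zbold_j - \Mbold_j\grad\zbold_{j-1},\,\Psibold\rangle.
\]
Since the constraint set $\{\Psibold : \norm{\Psibold}_\infty\leq 1\}$ is symmetric under $\Psibold\mapsto-\Psibold$, the supremum is insensitive to the overall sign, so the right-hand side of \eqref{eq: sup lem 3.4} equals $\sup\{\langle \Mbold_j\grad\zbold_{j-1}-\zbold_j,\,\Psibold\rangle : \Psibold\in\CCCcomp{\infty},\,\norm{\Psibold}_\infty\leq 1\}$.

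Next I would identify this supremum with the Radon norm. By definition $\norm{\blank}_\Mvarcal$ is the supremum of the pairing taken over $\Psibold\in\CCospace{}[\Omega,\Tcal^{\ell+j}(\RR^d)]$ with $\norm{\Psibold}_\infty\leq 1$; since $\CCCcomp{\infty}$ is dense in $\CCospace{}$ for the supremum norm, restricting the test class to $\CCCcomp{\infty}$ leaves the value unchanged, in both the finite and the infinite case. This yields the equality \eqref{eq: sup lem 3.4}.

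Finally, the finiteness criterion is the Riesz characterisation of Radon measures among distributions. If $\Mbold_j\grad\zbold_{j-1}-\zbold_j\in\Mvarcal(\Omega,\Tcal^{\ell+j}(\RR^d))$ then the supremum equals its finite Radon norm; conversely, if the supremum is finite, say bounded by $C$, then $\abs{\langle \Mbold_j\grad\zbold_{j-1}-\zbold_j,\,\Psibold\rangle}\leq C\norm{\Psibold}_\infty$ for all $\Psibold\in\CCCcomp{\infty}$, so the functional extends by density to a bounded linear functional on $\CCospace{}$, which by the Riesz representation theorem is represented by a finite Radon measure coinciding with $\Mbold_j\grad\zbold_{j-1}-\zbold_j$ as a distribution. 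I expect the only genuinely delicate point to be the bookkeeping of distribution orders — verifying that $\div_{\Mbold_j}\Psibold$ lands in the predual of $\zbold_{j-1}$ so that the two brackets combine into one well-defined distributional pairing — the remainder being the standard duality-and-density argument.
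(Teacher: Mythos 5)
Your proposal is correct and follows essentially the same route as the paper's own proof: rewrite the two brackets as the single pairing $\langle \zbold_j-\Mbold_j\grad\zbold_{j-1},\,\Psibold\rangle$, then invoke the density of $\CCCcomp{\infty}$ in $\CCospace{}$ together with the Riesz identification $\CCospace{}^\ast=\Mvarcal$ to get both the equality with the Radon norm and the finiteness criterion. The only additions are explicit remarks (the sign-symmetry of the test set and the order bookkeeping for $\div_{\Mbold_j}\Psibold$) that the paper leaves implicit; these are handled correctly.
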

\begin{proof}
In the distributional sense, we have for all $\Psibold\in\CCCcomp{\infty}[\Omega,\Tcal^{\ell+j}(\RR^d)]$:
\[
\langle 
\zbold_j - \Mbold_j\grad\zbold_{j-1},\,\Psibold
\rangle 
= 
\langle \zbold_j,\,\Psibold \rangle
+
\langle 
\zbold_{j-1},\,\div_{\Mbold_j}\Psibold
\rangle.
\]
Since $\CCCcomp{\infty}[\Omega,\Tcal^{\ell+j}(\RR^d)]$ is dense in $\CCospace{}[\Omega,\Tcal^{\ell+j}(\RR^d)]$, the distribution $\zbold_j-\Mbold_j\grad\zbold_{j-1}$ can be extended to an element in $\CCospace{}[\Omega,\Tcal^{\ell+j}(\RR^d)]^\ast=\Mvarcal\left(\Omega,\Tcal^{\ell+j}(\RR^d)\right)$ if and only if the supremum in \cref{eq: sup lem 3.4} is finite, in which case it coincides with the Radon norm by definition.
\end{proof}

Finally, we are now ready to show the \emph{minimum representation} of $\TDVM{\Qrm,\ell}{\alphabold}$, similarly to \cite[Theorem 3.5]{BreHol2014}.

\begin{proposition}\label{prop: equivalence TDV}
Let $\ubold\in\LLloc{1}[\Omega,\Tcal^\ell(\RR^d)]$, $\TDVM{\Qrm,\ell}{\alphabold}[\ubold,\Mcal]$ be defined as in \cref{def: TDV} and $\Mcal=(\Mbold_j)_{j=1}^\Qrm$ be a collection of positive definite tensor fields such that $\Mbold_j\in\Tcal^2(\RR^d)$ for all $j$. 
Then it holds  
\begin{equation}
\TDVM{\Qrm,\ell}{\alphabold}[\ubold,\Mcal] 
= 
\min_{
\substack{\zbold_j\in\BDVM{}{}[\Omega,\Mbold_{j+1},\Tcal^{\ell+j}(\RR^d)]
\\ 
j=1,\dots,\Qrm-1, 
\\
\zbold_0=\ubold,\, \zbold_\Qrm=\bm{0}}
}
\sum_{j=1}^\Qrm 
\alpha_{\Qrm-j}\norm{\Mbold_{j}\grad \zbold_{j-1} - \zbold_j}_{\Mvarcal},
\label{eq: equivalence TDV}
\end{equation}
with the minimum being finite if and only if 
$\zbold_j\in \BDVM{}{}[\Omega,\Mbold_{j+1},\Tcal^{\ell+j}(\RR^d)]$ for each $j = 0,\dots,\Qrm-1$, with $\zbold_0 = \ubold$ and $\zbold_\Qrm = \bm{0}$.
\end{proposition}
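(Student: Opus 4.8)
The plan is to recognise the supremum in \cref{def: TDV} as the Fenchel dual of the minimisation in \cref{eq: equivalence TDV}, following \cite[Theorem 3.5]{BreHol2014}. The engine is \cref{lem: 3.4}, which rewrites each summand as
\[
\alpha_{\Qrm-j}\norm{\Mbold_j\grad\zbold_{j-1}-\zbold_j}_{\Mvarcal}
=
\sup_{\substack{\Psibold_j\in\CCCcomp{\infty}[\Omega,\Tcal^{\ell+j}(\RR^d)]\\ \norm{\Psibold_j}_\infty\leq\alpha_{\Qrm-j}}}
\left\{\langle \zbold_{j-1},\,\div_{\Mbold_j}\Psibold_j\rangle + \langle \zbold_j,\,\Psibold_j\rangle\right\},
\]
where the weight $\alpha_{\Qrm-j}$ has been absorbed into the constraint. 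The inequality ``$\geq$'' is then elementary: given $\Psibold\in\Ycal^{\Qrm,\ell}_{\Mcal,\alphabold}$, define the chain $\Psibold_j=(-1)^{\Qrm-j}\div^{\Qrm-j}_{\Mcal}\Psibold$, so that $\Psibold_k=-\div_{\Mbold_{k+1}}\Psibold_{k+1}$ and $\norm{\Psibold_j}_\infty\leq\alpha_{\Qrm-j}$; bounding each bracket above by $\alpha_{\Qrm-j}\norm{\Mbold_j\grad\zbold_{j-1}-\zbold_j}_{\Mvarcal}$ via \cref{lem: 3.4} and summing, the telescoping cancellation leaves $\pm\langle\ubold,\,\div^\Qrm_\Mcal\Psibold\rangle$, whence $\TDVM{\Qrm,\ell}{\alphabold}[\ubold,\Mcal]$ is at most the right-hand side of \cref{eq: equivalence TDV}.

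For the reverse inequality I would substitute the above expressions into the objective and interchange the minimisation over $(\zbold_1,\dots,\zbold_{\Qrm-1})$ with the joint supremum over $(\Psibold_1,\dots,\Psibold_\Qrm)$. Performing the inner minimisation explicitly, the variable $\zbold_k$ (for $1\le k\le\Qrm-1$) enters only through the linear term $\langle\zbold_k,\,\Psibold_k+\div_{\Mbold_{k+1}}\Psibold_{k+1}\rangle$, so the infimum is $-\infty$ unless the compatibility relations $\Psibold_k=-\div_{\Mbold_{k+1}}\Psibold_{k+1}$ hold for all such $k$. On the feasible set these relations collapse the chain onto $\Psibold\coloneqq\Psibold_\Qrm$, all free terms cancel, and only the boundary contributions $\langle\zbold_0,\,\div_{\Mbold_1}\Psibold_1\rangle=(-1)^{\Qrm-1}\langle\ubold,\,\div^\Qrm_\Mcal\Psibold\rangle$ and $\langle\zbold_\Qrm,\,\Psibold_\Qrm\rangle=0$ survive; the norm constraints become $\norm{\div^i_\Mcal\Psibold}_\infty\leq\alpha_i$ for $i=0,\dots,\Qrm-1$. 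Since the feasible set is invariant under $\Psibold\mapsto-\Psibold$ the sign is immaterial, and a density passage from $\CCCcomp{\infty}$ to $\CCCcomp{\Qrm}$ identifies the interchanged value with $\TDVM{\Qrm,\ell}{\alphabold}[\ubold,\Mcal]$ exactly.

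The crux is to license this interchange as an equality, i.e.\ to rule out a duality gap. I would cast the objective of \cref{eq: equivalence TDV} as $G(\Lambda\zbold+\cbold)$, with $\zbold=(\zbold_1,\dots,\zbold_{\Qrm-1})$, $G=\sum_{j}\alpha_{\Qrm-j}\norm{\blank}_{\Mvarcal}$ a sum of scaled Radon norms, $\Lambda$ the bounded linear map $\zbold\mapsto(\Mbold_j\grad\zbold_{j-1}-\zbold_j)_j$, and $\cbold$ the inhomogeneous term carrying $\zbold_0=\ubold$ and $\zbold_\Qrm=\bm{0}$, and then invoke the Fenchel--Rockafellar theorem. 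The constraint qualification it demands is precisely that $\bm{0}$ be an interior point of the closure of the set of admissible $-\div_\Mbold\Psibold$, which is the content of \cref{lem: A1}; this verification---resting on the Sobolev--Korn coercivity estimate \cref{eq: sobolev-korn 1} and the projection of \cref{prop: beginning appendix A bre-hol} onto the finite-dimensional kernel---is the main obstacle.

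Finally, to upgrade the infimum to an attained minimum I would run the direct method on \cref{eq: equivalence TDV}: correcting a minimising sequence by elements of the finite-dimensional kernels $\ker(\Mbold_{j+1}\grad)$ renders it bounded in the relevant $\BDVM{}{}$-norms, closedness of the weighted gradient (\cref{prop: closedness of weighted derivative}) yields a weak-$\ast$ convergent subsequence, and lower semicontinuity of the Radon norm transfers the bound to the limit, producing a genuine minimiser. The ``finite if and only if'' clause is then immediate from the definitions together with the iterated application of \cref{th: 2.6}: a finite value forces every difference $\Mbold_j\grad\zbold_{j-1}-\zbold_j$ to be a measure, hence $\zbold_j\in\BDVM{}{}[\Omega,\Mbold_{j+1},\Tcal^{\ell+j}(\RR^d)]$ for $j=0,\dots,\Qrm-1$.
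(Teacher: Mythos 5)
Your overall architecture---rewriting each summand via \cref{lem: 3.4} and recognising the statement as an iterated Fenchel duality in the spirit of Bredies--Holler---is the same as the paper's, and your telescoping argument for the inequality between $\TDVM{\Qrm,\ell}{\alphabold}[\ubold,\Mcal]$ and the right-hand side of \cref{eq: equivalence TDV} is correct (it is weak duality, which the paper obtains for free). The gap is in the step you yourself call the crux: licensing the min--sup interchange. You invoke \cref{lem: A1} as the constraint qualification, but that lemma (interiority of $\bm{0}$ in the closure of $\{-\div_\Mbold\Psibold \,:\, \norm{\Psibold}_\infty\leq 1\}$ inside $\LL{d}[\Omega,\Tcal^\ell(\RR^d)]\cap\ker(\Mbold\grad)^\perp$) is proved for, and used in, \cref{th: 2.6}; it is not the qualification governing this duality. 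The paper orients the duality the other way round: the \emph{supremum} over the chain of test functions $\wbold=(\wbold_1,\dots,\wbold_\Qrm)\in X=\CCospace{1}[\Omega,\Tcal^{\ell+1}(\RR^d)]\times\dots\times\CCospace{\Qrm}[\Omega,\Tcal^{\ell+\Qrm}(\RR^d)]$ is the primal problem, the operator $\Kcal\wbold=\bigl(-\wbold_j-\div_{\Mbold_{j+1}}\wbold_{j+1}\bigr)_{j=1}^{\Qrm-1}$ encodes exactly your compatibility relations, and the qualification is the Attouch--Br\'ezis condition $Y=\bigcup_{\lambda>0}\lambda\left(\dom(G)-\Kcal\dom(F)\right)$, verified by the explicit recursion $\wbold_\Qrm=\bm{0}$, $\wbold_j=\zbold_j-\div_{\Mbold_{j+1}}\wbold_{j+1}$. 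If you insist on taking the minimisation over $\zbold$ as the primal, you must supply a different qualification (e.g.\ continuity of the sum of Radon norms at an admissible point of the range of your affine map $\Lambda\zbold+\cbold$), and nothing in your sketch or in the paper provides that.

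The orientation also matters for attainment. In the paper's setup the minimum \emph{is} the dual problem, so \cite[Corollary 2.3]{Attouch1986} delivers an attained minimum over $Y^\ast$ at no extra cost, after which \cref{lem: 3.4} identifies each dual term with $\norm{\Mbold_j\grad\zbold_{j-1}^\ast-\zbold_j^\ast}_\Mvarcal$ and \cref{th: 2.6} upgrades the optimal $\zbold_j^\ast$ to elements of $\BDVM{}{}[\Omega,\Mbold_{j+1},\Tcal^{\ell+j}(\RR^d)]$. Your direct-method substitute is not innocent: adding $\pbold_j\in\ker(\Mbold_{j+1}\grad)$ to $\zbold_j$ leaves $\Mbold_{j+1}\grad\otimes\zbold_j$ unchanged but alters the $j$-th term $\norm{\Mbold_j\grad\zbold_{j-1}-\zbold_j}_\Mvarcal$ of the objective, so normalising a minimising sequence by kernel corrections does not preserve near-optimality without a further argument. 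The remaining ingredients of your proposal---the formal computation of the dual functional, the role of \cref{lem: 3.4}, and the finiteness criterion via \cref{th: 2.6}---match the paper.
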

\begin{proof}
Let $\ubold\in\LLloc{1}[\Omega,\Tcal^\ell(\RR^d)]$ be such that $\TDVM{\Qrm,\ell}{\alphabold}[\ubold,\Mcal]<\infty$. 
In order to make use of the Fenchel-Rockafellar duality we introduce the following Banach spaces:
\[
\begin{aligned}
X 
&= 
\CCospace{1}[\Omega,\Tcal^{\ell+1}(\RR^d)]\times\dots\times\CCospace{\Qrm}[\Omega,\Tcal^{\ell+\Qrm}(\RR^d)],
\\
Y 
&= 
\CCospace{1}[\Omega,\Tcal^{\ell+1}(\RR^d)]\times\dots\times\CCospace{\Qrm-1}[\Omega,\Tcal^{\ell+\Qrm-1}(\RR^d)].
\end{aligned}
\]
Let $\zbold=(\zbold_1,\dots,\zbold_{\Qrm-1})\in Y$ be the primal variable,  $\wbold=(\wbold_1,\dots,\wbold_\Qrm)\in X$ be the dual variables and $\Kcal\in\Lcal(X,Y)$ be the linear operator defined as
\[
\Kcal =
\begin{pmatrix}
 -\Ibold & -\div_{\Mbold_2} & \bm{0} &\dots & \dots &\dots & \dots & \bm{0} \\ 
\bm{0} &  -\Ibold & -\div_{\Mbold_3} & \bm{0} & & & & \vdots \\ 
\vdots & \ddots & \ddots & \ddots & \ddots & & & \vdots \\ 
\vdots & &  \bm{0} & -\Ibold & -\div_{\Mbold_{j+1}} & \bm{0} & & \vdots \\ 
\vdots &  &  &  \ddots & \ddots & \ddots & \ddots & \vdots \\
\vdots &  &  &  & \ddots & \ddots & \ddots & \bm{0} \\  
\bm{0} & \ddots & \dots & \dots & \dots  & \bm{0} & -\Ibold & -\div_{\Mbold_{\Qrm}} \\ 
\end{pmatrix},
\]
such that
\[
\Kcal
\begin{pmatrix}
\wbold_1\\
\vdots\\
\wbold_\Qrm
\end{pmatrix}
=
\begin{pmatrix}
-\wbold_1 - \div_{\Mbold_{2}} \wbold_2\\
\vdots \\
-\wbold_{j} - \div_{\Mbold_{j+1}} \wbold_{j+1}\\
\vdots \\
-\wbold_{\Qrm-1} - \div_{\Mbold_{\Qrm}} \wbold_{\Qrm}
\end{pmatrix}.
\]
Let the proper, convex and lower semi-continuous functionals
\[
\begin{aligned}
F &: X\to ]-\infty,\infty],
& 
F(\wbold) 
&= 
-\langle \ubold,\, \div_{\Mbold_1} \wbold_1 \rangle 
+ 
\sum_{j=1}^{\Qrm} 
I_{\{\norm{\blank}_\infty\leq \alpha_{\Qrm-j}\}}(\wbold_j)
\\
G &: Y\to ]-\infty,\infty], 
& G(\zbold) &= I_{\bm{0}}(\zbold),
\end{aligned}
\]
where $I_Z$ is the indicator function of this set, i.e.\ $I_Z(\zbold) = 0$ if $\zbold\in Z$ and $I_Z(\zbold) =\infty$ otherwise. 
Then, the following identity holds from \cref{def: TDV}:
\[
\TDVM{\Qrm,\ell}{\alphabold}[\ubold,\Mcal] 
= 
\sup_{\wbold\in X} -F(\wbold) - G(\Kcal\wbold).
\]

In the next, we want to obtain the following result:
\begin{equation}
\TDVM{\Qrm,\ell}{\alphabold}[\ubold,\Mcal] 
= 
\min_{\wbold^\ast\in Y^\ast} F^\ast(-\Kcal^\ast \wbold^\ast) + G^\ast(\wbold^\ast).
\label{eq: minimum representation}
\end{equation}
This follows from \cite[Corollary 2.3]{Attouch1986}, once we show
\[
Y = \bigcup_{\lambda>0} \lambda\left(\dom(G)-\Kcal \dom(F)\right).
\]
Indeed, let $\zbold\in Y$ and define recursively:
\[
\begin{aligned}
\wbold_\Qrm 
&=\bm{0} 
&&\in\CCospace{\Qrm}[\Omega,\Tcal^{\ell+\Qrm}(\RR^d)],
\\
\wbold_{\Qrm-1} 
&=\zbold_{\Qrm-1} - \div_{\Mbold_{\Qrm}}\wbold_{\Qrm} 
&&\in\CCospace{\Qrm-1}[\Omega,\Tcal^{\ell+\Qrm-1}(\RR^d)],
\\
&\vdotswithin{=}
\\
\wbold_j 
&= \zbold_j - \div_{\Mbold_{j+1}}\wbold_{j+1} 
&&\in \CCospace{j}[\Omega,\Tcal^{\ell+j}(\RR^d)],  
\\
&\vdotswithin{=}
\\
\wbold_1 
&= \zbold_1 - \div_{\Mbold_{2}}\wbold_{2} 
&&\in \CCospace{1}[\Omega,\Tcal^{\ell+1}(\RR^d)].
\end{aligned}
\]
Hence, $\wbold\in X$ and $-\Kcal\wbold = \zbold\in Y$. 
Moreover, for $\lambda>0$ large enough, we have
\[
\norm{\lambda^{-1}\wbold}_{\infty} \leq \alpha_{\Qrm-j},\quad\text{for all }j=1,\dots,\Qrm.
\]
Therefore, from $\lambda^{-1}\wbold\in\dom(F)$ and $\bm{0}\in\dom(G)$, we get the following representation:
\[
\zbold = \lambda(\bm{0}-\Kcal\lambda^{-1}\wbold).
\]
This means that \cref{eq: minimum representation} holds and the minimum is obtained in $Y^\ast$, which can be written as
\[
Y^\ast = \left(\CCospace{1}[\Omega,\Tcal^{\ell+1}(\RR^d)]\right)^\ast \times \dots \times \left(\CCospace{1}[\Omega,\Tcal^{\ell+\Qrm-1}(\RR^d)]\right)^\ast
\]
and $\zbold^\ast=(\zbold_1^\ast,\dots,\zbold_{\Qrm-1}^\ast)$, $\zbold_{j}^\ast\in\CCospace{j}[\Omega,\Tcal^{\ell+j}(\RR^d)]$, for $1\leq j\leq \Qrm-1$. 
Hence, imposing $\zbold_0^\ast = \ubold$ and $\zbold_{\Qrm}^\ast = \bm{0}$, from $G^\ast=\bm{0}$ the following chain holds:
\[
\begin{aligned}
F^\ast(-\Kcal^\ast\zbold^\ast) + G^\ast(\zbold^\ast)& 
= 
\sup_{\wbold\in X}  
\left( 
\langle 
-\Kcal^\ast\zbold^\ast,\,\wbold
\rangle 
+ 
\langle 
\ubold,\,\div_{\Mbold_1}\wbold_1
\rangle 
-\sum_{j=1}^\Qrm 
I_{\{\norm{\blank}_{\infty}\leq\alpha_{\Qrm-j}\}}(\wbold_j)
\right)
\\
&= 
\sup_{
\substack{
\wbold\in X \\ \norm{\wbold_j}_\infty \leq \alpha_{\Qrm-j} 
\\ 
j=1,\dots,\Qrm}
}
\left( 
\langle \ubold ,\, \div_{\Mbold_1}\wbold_1 \rangle 
+ 
\sum_{j=1}^{\Qrm-1} 
\langle 
\zbold_j^\ast,\, \div_{\Mbold_{j+1}} \wbold_{j+1} 
+\wbold_j
\rangle
\right)
\\
&= 
\sum_{j=1}^\Qrm \alpha_{\Qrm-j} 
\left( 
\sup_{
\substack{
\wbold_j\in \CCospace{j}[\Omega,\Tcal^{\ell+j}(\RR^d)],
\\ 
\norm{\wbold_j}_\infty\leq 1}
}
\langle \zbold_{j-1}^\ast,\,\div_{\Mbold_j}\wbold_j \rangle  
+ \langle \zbold_{j}^\ast,\,\wbold_j \rangle  
\right).
\end{aligned}
\]
From \cref{lem: 3.4} we have that each supremum is finite and
\[
\sup_{
\substack{
\wbold_j\in \CCCcomp{j}[\Omega,\Tcal^{\ell+j}(\RR^d)],
\\ 
\norm{\wbold_j}_\infty\leq 1}
}
\langle \zbold_{j-1}^\ast,\,\div_{\Mbold_j}\wbold_j \rangle  
+ \langle \zbold_{j}^\ast,\,\wbold_j \rangle   
= 
\norm{\Mbold_j\grad \zbold_{j-1}^\ast-\zbold_j^\ast}_{\Mvarcal} 
\]
if and only if $\Mbold_j\grad \zbold_{j-1}^\ast-\zbold_j^\ast \in \Mvarcal\left(\Omega,\Tcal^{\ell+j}(\RR^{d})\right)$, for $j=1,\dots,\Qrm$. 
Since, $\zbold_\Qrm^\ast=\bm{0}$, by \cref{th: 2.6} this means that $\zbold_{\Qrm-1}^\ast\in\BDVM{}{}[\Omega,\Mbold_{\Qrm},\Tcal^{\ell+\Qrm-1}(\RR^d)]$, so
\[
\zbold_{\Qrm-1}^\ast \in \Mvarcal\left(\Omega,\Tcal^{\ell+\Qrm-1}(\RR^{d})\right).
\]
By induction, we have $\zbold_j^\ast\in\BDVM{}{}[\Omega,\Mbold_{j+1},\Tcal^{\ell+j}(\RR^d)]$ for each $j=0,\dots,\Qrm$ so we can take the minimum in \cref{eq: minimum representation} over all BDV-tensor fields, obtaining \cref{eq: equivalence TDV}: such minimum is finite if $\ubold\in\BDVM{}{}[\Omega,\Mcal,\Tcal^\ell(\RR^d)]$. 
\end{proof}

\begin{remark}\label{rem: minimal representation remark}
Let $\alphabold_{j} = (\alpha_0,\dots,\alpha_{j})$ be such that  $\alphabold_j \subseteq\alphabold=(\alpha_0,\dots,\alpha_{\Qrm-1})$ and
let $\Mcal_{\Qrm-j}^-=(\Mbold_{\Qrm-j+1},\dots,\Mbold_\Qrm)$ be a subset of $\Mcal=(\Mbold_1,\dots,\Mbold_\Qrm)$ such that $\Mcal_{\Qrm-j}^-\subseteq\Mcal$. Then the regulariser $\TDVM{\Qrm,\ell}{\alphabold}[\ubold,\Mcal]$ can be expressed recursively as:
\begin{small}
\[
\begin{aligned}
\TDVM{1,\ell+\Qrm-1}{\alpha_0}[\zbold_{\Qrm-1},\Mcal_{\Qrm-1}^-] 
&= 
\alpha_0 \norm{\Mbold_\Qrm \grad \otimes\zbold_{\Qrm-1}}_\Mvarcal,
\\
&\vdotswithin{=}
\\
\TDVM{j+1,\ell+\Qrm-j-1}{\alphabold_j}[\zbold_{\Qrm-j},\Mcal_{\Qrm-j-1}^-] 
&= 
\begin{aligned}
\min_{\zbold_{\Qrm-j}}
&
\Big(\alpha_{j} \norm{\Mbold_{\Qrm-j}\grad \otimes \zbold_{\Qrm-j-1} - \zbold_{\Qrm-j}}_\Mvarcal 
\\ &+
\TDVM{j,\ell+\Qrm-j}{\alphabold_{j-1}}[\zbold_{\Qrm-j}, \Mcal_{\Qrm-j}^-]\Big),
\end{aligned}
\\
&\vdotswithin{=}
\\
\TDVM{\Qrm,\ell}{\alphabold}[\ubold,\Mcal_0^-]
&= 
\min_{\zbold_1}
\left(
\alpha_{\Qrm-1}\norm{\Mbold_1\grad\otimes \ubold - \zbold_1}_\Mvarcal
+ 
\TDVM{\Qrm-1,\ell+1}{\alphabold_{\Qrm-2}}(\zbold_1,\Mcal_{1}^-)
\right),
\end{aligned}
\]
\end{small}
where $\zbold_j\in\BDVM{}{}[\Omega,\Mbold_{j+1},\Tcal^{\ell+j}(\RR^d)]$.
\end{remark}

\begin{remark}
As in \cite[Remark 3.8]{BreHol2014}, the minimum representation is monotonic with respect to the weights. Indeed let $\alphabold,\betabold\in\RR_+^\Qrm$ with $\alpha_j\leq\beta_j$ for each $j=0,\dots,\Qrm-1$. Then
\[
\TDVM{\Qrm,\ell}{\alphabold}[\ubold,\Mcal]
\leq 
\TDVM{\Qrm,\ell}{\betabold}[\ubold,\Mcal].
\]
\end{remark} 
\section{Existence of TDV-regularised solutions}\label{sec: existence}
In this section 
we prove the existence of solutions to TDV-regularised problems of the type:
\begin{equation}
\min_{\ubold\in\LL{p}[\Omega,\Tcal^\ell(\RR^d)]} 
\TDVM{\Qrm,\ell}{\alphabold}[\ubold,\Mcal] + F(\ubold),
\label{eq: minimization problem}
\end{equation}
where $F:\LL{p}[\Omega,\Tcal^\ell(\RR^d)]\to \RR$ is a fidelity term.
In the next, we will follow \cite{BreHol2014} so as to check that the same results hold in our weighted case and we will proceed often by induction on $\Qrm$. 
We proceed by proving the embedding theorems and the existence of a minimiser for \cref{eq: minimization problem}.

\subsection{Embeddings}
We state some results in view of the embedding \cref{th: bredies 4.16,th: 4.17 bredies}. 
The following Sobolev-Korn type inequality holds for smooth tensor fields with compact support, similarly to \cite[Theorem 4.8]{Bredies2012}.
\begin{lemma}\label{lem: poincare}
Let $\ubold\in\CCCcomp{1}[\Omega,\Tcal^\ell(\RR^d)]$ and $\Mbold\in\LL{\infty}(\Omega,\Tcal^2(\RR^d))$ be a field of invertible matrices for every $\xbold\in\Omega$ such
that $S=\sup_{\xbold}\norm{(\Mbold(\xbold))^{-1}}_2<\infty$. 
Then there exists a constant $C$ depending only on $\Omega,\ell$ and $S$ such that
\[
\norm{\ubold}_{d/(d-1)}
\leq C 
\norm{\Mbold \grad \otimes \ubold}_1.
\]
\end{lemma}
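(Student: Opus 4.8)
The plan is to reduce the weighted estimate to the classical, unweighted Sobolev inequality and then absorb the weight \emph{pointwise} using the invertibility of $\Mbold$. First I would invoke the standard Gagliardo--Nirenberg--Sobolev inequality. Since $\ubold\in\CCCcomp{1}[\Omega,\Tcal^\ell(\RR^d)]$ has compact support, the scalar inequality applies to each component of $\ubold$ (equivalently, after extending by zero, to the function $\xbold\mapsto\abs{\ubold(\xbold)}$), and summing the component estimates yields a constant $C_1=C_1(d,\ell)>0$, independent of $\ubold$, with
\[
\norm{\ubold}_{d/(d-1)}\leq C_1\,\norm{\grad\otimes\ubold}_1.
\]

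The key step is to recover the full gradient $\grad\otimes\ubold$ from the weighted gradient $\Mbold\grad\otimes\ubold$. By \Cref{def: weighted derivative tensors} we have, pointwise and componentwise, $(\Mbold\grad\otimes\ubold)_{j,i_1,\dots,i_\ell}=\sum_{k}\Mbold_{j,k}\,(\grad\otimes\ubold)_{k,i_1,\dots,i_\ell}$; that is, at each $\xbold\in\Omega$ the matrix $\Mbold(\xbold)$ acts linearly on the first (gradient) slot of the tensor $(\grad\otimes\ubold)(\xbold)$. Since $\Mbold(\xbold)$ is invertible, this action can be undone, giving $(\grad\otimes\ubold)(\xbold)=\Mbold(\xbold)^{-1}(\Mbold\grad\otimes\ubold)(\xbold)$, where again $\Mbold(\xbold)^{-1}$ acts only on the first index.

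I would then use a submultiplicativity property of the Frobenius-type tensor norm: if a matrix $A$ acts on the first index of a tensor $T\in\Tcal^{\ell+1}(\RR^d)$, then $\abs{A\cdot T}\leq\norm{A}_2\,\abs{T}$, because, viewing $T$ as the family of vectors indexed by the remaining $\ell$ slots, $\abs{A\cdot T}^2=\sum_{i_1,\dots,i_\ell}\abs{A\,T_{\cdot,i_1,\dots,i_\ell}}^2\leq\norm{A}_2^2\,\abs{T}^2$. Applying this with $A=\Mbold(\xbold)^{-1}$ gives the pointwise bound $\abs{(\grad\otimes\ubold)(\xbold)}\leq\norm{(\Mbold(\xbold))^{-1}}_2\,\abs{(\Mbold\grad\otimes\ubold)(\xbold)}\leq S\,\abs{(\Mbold\grad\otimes\ubold)(\xbold)}$, and integrating over $\Omega$ yields $\norm{\grad\otimes\ubold}_1\leq S\,\norm{\Mbold\grad\otimes\ubold}_1$. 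Chaining this with the Sobolev inequality above gives
\[
\norm{\ubold}_{d/(d-1)}\leq C_1\,\norm{\grad\otimes\ubold}_1\leq C_1 S\,\norm{\Mbold\grad\otimes\ubold}_1,
\]
so the claim holds with $C:=C_1 S$, depending only on $\Omega$ (through $d$), on $\ell$, and on $S=\sup_{\xbold}\norm{(\Mbold(\xbold))^{-1}}_2$.

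The only points needing a little care — and hence the closest thing to an obstacle — are the two elementary facts feeding the estimate: that the scalar Sobolev inequality transfers to $\Tcal^\ell(\RR^d)$-valued maps with a constant depending only on $d$ and $\ell$, and that the tensor norm is submultiplicative under a matrix acting on a single index. Both are routine once the first-slot action of $\Mbold$ from \Cref{def: weighted derivative tensors} is made explicit, so the proof is essentially a pointwise application of $\Mbold^{-1}$ followed by the classical inequality.
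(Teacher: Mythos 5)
Your proposal is correct and follows essentially the same route as the paper's proof: apply the standard Sobolev inequality for compactly supported tensor fields, then write $\grad\otimes\ubold = \Mbold^{-1}\Mbold\grad\otimes\ubold$ pointwise and bound by the operator norm of $\Mbold^{-1}$ to get the constant $C = C_1 S$. The only difference is that you spell out the two elementary facts (the tensor-valued Sobolev inequality and the submultiplicativity of the Frobenius-type norm under a matrix acting on the first index) which the paper takes for granted.
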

\begin{proof}
Let $\norm{\blank}$ be the operator norm. We have the desired inequality, where the first one is due to the standard Sobolev inequality for tensor-valued functions:
\[
\norm{\ubold}_{d/(d-1)} 
\leq C_1 
\norm{\grad\otimes\ubold}_{1} = 
C_1 \norm{(\Mbold(\cdot))^{-1}\Mbold(\cdot) \grad\otimes\ubold}_1
\leq
C_1 S \norm{\Mbold\grad\otimes \ubold}_1,
\]
and the conclusion follows with $C:=C_1S$.
\end{proof}

The following lemma states a result similar to \cite[Lemma 3.9]{BreHol2014}.
\begin{lemma}\label{lem 3.9 brehol}
For each $\Qrm\geq 1$, $\ell\geq 0$ there exists a constant $C_1>0$ depending only on $\Omega,\,\Qrm$ and $\ell$ such that for each $\ubold\in\BDVM{}{}[\Omega,\Mbold,\Tcal^\ell(\RR^d)]$ and $\overline{\wbold}\in\ker(\TDVM{\Qrm,\ell+1}{\alphabold})\subset\LL{1}[\Omega,\Tcal^\ell(\RR^d)]$:
\[
\norm{\Mbold\grad\otimes \ubold}_\Mvarcal \leq C_1 ( \norm{\ubold}_1 + \norm{\Mbold\grad\otimes \ubold - \overline{\wbold}}_\Mvarcal).
\]
\end{lemma}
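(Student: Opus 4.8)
The plan is to argue by contradiction through a compactness argument, assembling the two structural facts already at our disposal: that the kernel of the higher-order weighted derivative is finite-dimensional (\cref{prop: 3.3 brehol}) and that the weighted gradient $\Mbold\grad$ is closed (\cref{prop: closedness of weighted derivative}). First I would suppose the asserted estimate fails for every constant. Then for each $n\in\NN$ one finds $\ubold_n\in\BDVM{}{}[\Omega,\Mbold,\Tcal^\ell(\RR^d)]$ and $\overline{\wbold}_n\in\ker(\TDVM{\Qrm,\ell+1}{\alphabold})$ with
\[
\norm{\Mbold\grad\otimes\ubold_n}_\Mvarcal > n\left(\norm{\ubold_n}_1 + \norm{\Mbold\grad\otimes\ubold_n-\overline{\wbold}_n}_\Mvarcal\right).
\]
In particular the left-hand side is strictly positive; since the kernel is a linear subspace I may divide both $\ubold_n$ and $\overline{\wbold}_n$ by $\norm{\Mbold\grad\otimes\ubold_n}_\Mvarcal$ and so normalise to $\norm{\Mbold\grad\otimes\ubold_n}_\Mvarcal=1$. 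The displayed inequality then forces $\norm{\ubold_n}_1\to 0$ and $\norm{\Mbold\grad\otimes\ubold_n-\overline{\wbold}_n}_\Mvarcal\to 0$.

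The next step is to pass to a limit in $(\overline{\wbold}_n)$. By the triangle inequality $\norm{\overline{\wbold}_n}_\Mvarcal\to 1$, so the sequence is bounded; as it lies in the finite-dimensional space $\ker(\TDVM{\Qrm,\ell+1}{\alphabold})$, on which all norms are equivalent, a subsequence (not relabelled) converges in $\Mvarcal$ to some $\overline{\wbold}$ belonging to this closed kernel, with $\norm{\overline{\wbold}}_\Mvarcal=1$. Since $\norm{\Mbold\grad\otimes\ubold_n-\overline{\wbold}_n}_\Mvarcal\to 0$, it follows that $\Mbold\grad\otimes\ubold_n\to\overline{\wbold}$ in $\Mvarcal$ as well.

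Finally I would invoke closedness to reach the contradiction: we have $\ubold_n\to\bm{0}$ in $\LL{1}[\Omega,\Tcal^\ell(\RR^d)]$ and $\Mbold\grad\otimes\ubold_n\to\overline{\wbold}$ in $\Mvarcal(\Omega,\Tcal^{\ell+1}(\RR^d))$, so by \cref{prop: closedness of weighted derivative} the limiting object must be the weighted gradient of the $\LL{1}$-limit, namely $\overline{\wbold}=\Mbold\grad\otimes\bm{0}=\bm{0}$, contradicting $\norm{\overline{\wbold}}_\Mvarcal=1$. Hence the estimate holds for a finite constant $C_1$ produced by the compactness argument; note that the set from which $\overline{\wbold}$ is drawn, $\ker(\TDVM{\Qrm,\ell+1}{\alphabold})$, coincides (the fields being everywhere invertible) with the fixed, $\alphabold$-independent space of polynomials of degree below $\Qrm$, which is what pins the dependence to $\Omega$, $\Qrm$ and $\ell$. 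I expect the only genuine subtlety to be the bookkeeping of the two topologies involved — strong $\LL{1}$ convergence of $(\ubold_n)$ against $\Mvarcal$ convergence of the gradients — and this is exactly what the finite-dimensionality of the kernel and the closedness proposition are designed to reconcile, so no new estimate beyond these is required.
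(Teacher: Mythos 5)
Your proposal is correct and follows essentially the same route as the paper's own proof: a contradiction/compactness argument using a normalised sequence with $\norm{\Mbold\grad\otimes\ubold_n}_{\Mvarcal}=1$, convergence of $(\overline{\wbold}_n)$ in the finite-dimensional kernel, and closedness of the weighted gradient to force the limit to vanish. If anything, your final step is phrased slightly more carefully than the paper's (identifying $\overline{\wbold}=\bm{0}$ via \cref{prop: closedness of weighted derivative} and contradicting $\norm{\overline{\wbold}}_{\Mvarcal}=1$), but the argument is the same.
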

\begin{proof}
We argue by contradiction. Suppose that there exists $\Qrm$ and $\ell$ such that the bound does not hold. 
Then there exist $(\ubold_j)_{j\in\NN}$ and $(\overline{\wbold}_j)_{j\in\NN}$, with each $\ubold_j\in\BDVM{}{}[\Omega,\Mbold,\Tcal^\ell(\RR^d)]$ and $\overline{\wbold}_j\in\ker(\TDVM{\Qrm,\ell+1}{\alphabold})$ such that
\[
\norm{\Mbold\grad\otimes \ubold_j}_\Mvarcal 
= 1
\quad\text{and}\quad 
\norm{\ubold_j}_1 + \norm{\Mbold\grad\otimes \ubold_j -\overline{\wbold}_j}_\Mvarcal\leq j^{-1}.
\]
Thus $(\overline{\wbold}_j)_{j\in\NN}$ is bounded with respect to the norm $\norm{\blank}_\Mvarcal$ in the finite dimensional space $\ker(\TDVM{\Qrm,\ell+1}{\alphabold})$.
Therefore, there exists a subsequence relabelled as $(\overline{\wbold}_j)_{j\in\NN}$ and converging to $\overline{\wbold}\in\ker(\TDVM{\Qrm,\ell+1}{\alphabold})$ in the $\LL{1}[\Omega, \Tcal^{\ell+1}(\RR^d)]$ norm and thus, $\Mbold\grad\otimes \ubold_j\to \overline{\wbold}$.
Moreover, $\ubold_j\to \bm{0}$ in $\LL{1}[\Omega, \Tcal^\ell(\RR^d)]$ implies that  $\Mbold\grad\otimes \ubold_j\to \bm{0}$ in $\Mvarcal$ by closedness of the gradient and this contradicts $\norm{\Mbold\grad\otimes \ubold}_\Mvarcal=1$.
\end{proof}

We can also define the zero extension $E\ubold$ of a function $\ubold$ of bounded directional variation. Such zero extension has bounded directional variation as can be proved adapting \cite[Corollary 4.15]{Bredies2012} based on \cite[Theorem 4.12]{Bredies2012}.
\begin{corollary}\label{corBred}
Let $\Omega$ a bounded Lipschitz domain and $\ubold\in\BDVM{}{}[\Omega,\Mbold,\Tcal^\ell(\RR^d)]$. Then the zero extension $E\ubold$ is in $\BDVM{}{}[\RR^d,\Mbold,\Tcal^\ell(\RR^d)]$. In addition, there exists $C>0$ such that for all $\ubold\in\BDVM{}{}[\Omega,\Mbold,\Tcal^\ell(\RR^d)]$:
\begin{equation}
\norm{E\ubold}_1+\norm{\Mbold\grad\otimes(E\ubold)}_\Mvarcal\leq C \left(\norm{\ubold}_1+\norm{\Mbold\grad\otimes\ubold}_\Mvarcal\right).
\label{eq: zero extension continuity}
\end{equation}
\end{corollary}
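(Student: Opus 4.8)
The plan is to reduce the weighted assertion to the classical zero-extension theorem for tensor-valued $\BV{}$ functions on Lipschitz domains, using that $\Mbold$ is smooth and everywhere positive definite, hence boundedly invertible on $\overline\Omega$. Throughout, write $S=\sup_{\xbold}\norm{(\Mbold(\xbold))^{-1}}_2$ and $T=\sup_{\xbold}\norm{\Mbold(\xbold)}_2$, both finite by smoothness on the compact set $\overline\Omega$.

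First I would show that $\BDVM{}{}[\Omega,\Mbold,\Tcal^\ell(\RR^d)]$ coincides, with equivalent graph norms, with the ordinary space $\BV{}[\Omega,\Tcal^\ell(\RR^d)]$ of tensor-valued functions of bounded variation. Indeed, by \cref{def: weighted derivative tensors} one has $(\Mbold\grad\otimes\ubold)_{j,i_1,\dots,i_\ell}=\sum_k\Mbold_{j,k}(\grad\otimes\ubold)_{k,i_1,\dots,i_\ell}$, so $\grad\otimes\ubold=\Mbold^{-1}(\Mbold\grad\otimes\ubold)$ with $\Mbold^{-1}$ a continuous, bounded matrix field. Since multiplying a Radon measure by a continuous bounded function again produces a Radon measure, $\Mbold\grad\otimes\ubold\in\Mvarcal$ if and only if $\grad\otimes\ubold\in\Mvarcal$, and the two Radon norms satisfy $\norm{\grad\otimes\ubold}_\Mvarcal\leq S\norm{\Mbold\grad\otimes\ubold}_\Mvarcal$ and $\norm{\Mbold\grad\otimes\ubold}_\Mvarcal\leq T\norm{\grad\otimes\ubold}_\Mvarcal$. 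This is the clean reduction that lets me borrow the unweighted theory.

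Next I would invoke the classical extension result \cite[Corollary 4.15]{Bredies2012}, itself built on the trace theorem \cite[Theorem 4.12]{Bredies2012}: for $\ubold\in\BV{}[\Omega,\Tcal^\ell(\RR^d)]$, the zero extension satisfies $E\ubold\in\BV{}[\RR^d,\Tcal^\ell(\RR^d)]$ with $\grad\otimes(E\ubold)=(\grad\otimes\ubold)\measurerestr\Omega+(\nu\otimes\ubold^{\mathrm{tr}})\,\Hcal^{d-1}\measurerestr\partial\Omega$, where $\ubold^{\mathrm{tr}}\in\LL{1}[\partial\Omega,\Tcal^\ell(\RR^d)]$ is the trace of $\ubold$ and $\norm{\ubold^{\mathrm{tr}}}_{\LL{1}[\partial\Omega]}\leq C_0(\norm{\ubold}_1+\norm{\grad\otimes\ubold}_\Mvarcal)$. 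To return to the weighted picture I would extend $\Mbold$ smoothly past $\overline\Omega$ (its values outside $\overline\Omega$ are irrelevant, since $E\ubold$ is supported in $\overline\Omega$) and apply the algebraic identity underlying \cref{lem: move M}, namely $(\nu\otimes\ubold^{\mathrm{tr}})\cdot\tr(\Mbold\otimes\Psibold^\sim)=((\Mbold\nu)\otimes\ubold^{\mathrm{tr}})\cdot\Psibold$, together with the integration-by-parts formula of \cref{lem: divergence modified}. Testing $E\ubold$ against $\div_\Mbold\Psibold$ for $\Psibold\in\CCCcomp{\infty}[\RR^d,\Tcal^{\ell+1}(\RR^d)]$ then yields the decomposition $\Mbold\grad\otimes(E\ubold)=(\Mbold\grad\otimes\ubold)\measurerestr\Omega\pm((\Mbold\nu)\otimes\ubold^{\mathrm{tr}})\,\Hcal^{d-1}\measurerestr\partial\Omega$. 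The interior part is a finite measure by hypothesis, and the boundary part has Radon norm at most $T\,\norm{\ubold^{\mathrm{tr}}}_{\LL{1}[\partial\Omega]}$ because $\abs{\nu}=1$; hence $\Mbold\grad\otimes(E\ubold)\in\Mvarcal$, i.e.\ $E\ubold\in\BDVM{}{}[\RR^d,\Mbold,\Tcal^\ell(\RR^d)]$.

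Finally, collecting the estimates gives \cref{eq: zero extension continuity}: using $\norm{E\ubold}_1=\norm{\ubold}_1$, the trace inequality, and the norm equivalence of the first step,
\[
\norm{E\ubold}_1+\norm{\Mbold\grad\otimes(E\ubold)}_\Mvarcal
\leq
\norm{\ubold}_1+\norm{\Mbold\grad\otimes\ubold}_\Mvarcal
+T\,C_0\big(\norm{\ubold}_1+S\,\norm{\Mbold\grad\otimes\ubold}_\Mvarcal\big),
\]
which is bounded by $C(\norm{\ubold}_1+\norm{\Mbold\grad\otimes\ubold}_\Mvarcal)$ for a suitable $C=C(\Omega,\ell,S,T)$. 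The one genuinely nontrivial ingredient is the $\BV{}$ trace inequality on Lipschitz domains controlling $\norm{\ubold^{\mathrm{tr}}}_{\LL{1}[\partial\Omega]}$; I expect this to be the main obstacle, but it is supplied verbatim (componentwise) by \cite[Theorem 4.12]{Bredies2012}, and everything else is routine bookkeeping with the smooth, boundedly invertible weight $\Mbold$.
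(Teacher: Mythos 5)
Your argument is correct and is essentially the adaptation of \cite[Corollary 4.15, Theorem 4.12]{Bredies2012} that the paper's proof invokes without giving any detail: zero-extend, identify the extra contribution of $\Mbold\grad\otimes(E\ubold)$ as a trace term concentrated on $\partial\Omega$, and control it by the $\mathrm{BV}$ trace inequality on Lipschitz domains. The one step you make explicit that the paper leaves implicit --- conjugating by the smooth, boundedly invertible field $\Mbold$ to show that $\BDVM{}{}[\Omega,\Mbold,\Tcal^\ell(\RR^d)]$ coincides with $\BV{}[\Omega,\Tcal^\ell(\RR^d)]$ with equivalent norms, so the unweighted extension theorem can be quoted verbatim instead of re-proved for the weighted operator --- is sound, since $\Psibold\mapsto\tr(\Mbold\otimes\Psibold^\sim)$ is an isomorphism of the space of test fields, and it is arguably the cleanest way to carry out the adaptation.
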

\begin{proof}
It follows by adapting the proof of \cite[Corollary 4.15, Theorem 4.12]{Bredies2012}.
\end{proof}

In the next theorem, we prove the continuous embedding of  the space $\BDVM{}{}$ into $\LL{d/(d-1)}$, similarly to \cite[Theorem 4.16]{Bredies2012}.
\begin{theorem}\label{th: bredies 4.16}
Let $\Omega\subset\RR^d$ be a bounded Lipschitz domain. Then, there is a continuous injection
\[
\BDVM{}{}[\Omega,\Mbold,\Tcal^\ell(\RR^d)] 
\xhookrightarrow{} 
\LL{d/(d-1)}[\Omega,\Tcal^\ell(\RR^d)].
\]
\end{theorem}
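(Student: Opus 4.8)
The plan is to prove the quantitative estimate
\[
\norm{\ubold}_{d/(d-1)} \leq C\left(\norm{\ubold}_1 + \norm{\Mbold\grad\otimes\ubold}_\Mvarcal\right),
\qquad \ubold\in\BDVM{}{}[\Omega,\Mbold,\Tcal^\ell(\RR^d)],
\]
with $C$ depending only on $\Omega$, $\ell$ and $\Mbold^{-1}$. This single inequality yields the theorem: it shows that every $\ubold\in\BDVM{}{}$ in fact belongs to $\LL{d/(d-1)}[\Omega,\Tcal^\ell(\RR^d)]$, that the inclusion (which is the restriction of the identity map, hence injective) is bounded with respect to the $\BDVM{}{}$-norm, and therefore continuous. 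So the whole argument reduces to producing this bound.

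First I would treat the case where $\ubold$ has compact support in $\Omega$. By \cref{prop: 4.13 Bredies2012}, in the compactly supported situation there is an approximating sequence $(\ubold_j)_{j\in\NN}\subset\CCCcomp{\infty}[\Omega,\Tcal^\ell(\RR^d)]$ converging strictly to $\ubold$, i.e.\ $\ubold_j\to\ubold$ in $\LL{1}$ together with $\norm{\Mbold\grad\otimes\ubold_j}_\Mvarcal\to\norm{\Mbold\grad\otimes\ubold}_\Mvarcal$. Each $\ubold_j$ is smooth and compactly supported, so the Sobolev--Korn inequality of \cref{lem: poincare} applies and gives $\norm{\ubold_j}_{d/(d-1)}\leq C\norm{\Mbold\grad\otimes\ubold_j}_1 = C\norm{\Mbold\grad\otimes\ubold_j}_\Mvarcal$, the last equality because for a smooth field the weighted gradient is an $\LL{1}$ function whose Radon norm equals its $\LL{1}$ norm. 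Passing to the limit, I would extract an a.e.\ convergent subsequence from $\ubold_j\to\ubold$ in $\LL{1}$ and invoke Fatou's lemma to get lower semicontinuity of the $\LL{d/(d-1)}$-norm, while the right-hand side converges by strictness; this produces $\norm{\ubold}_{d/(d-1)}\leq C\norm{\Mbold\grad\otimes\ubold}_\Mvarcal$ for compactly supported $\ubold$.

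To remove the compact-support assumption I would use the zero extension. Fix a large ball $B$ with $\overline{\Omega}\subset B$ and extend $\Mbold$ to a smooth, uniformly positive definite field on $B$ (possible since $\Mbold\in\CCCspace{\infty}$ is positive definite and $\overline{\Omega}$ is compact), keeping $\Mbold^{-1}$ bounded. By \cref{corBred} the zero extension $E\ubold$ lies in $\BDVM{}{}[\RR^d,\Mbold,\Tcal^\ell(\RR^d)]$ and satisfies $\norm{\Mbold\grad\otimes(E\ubold)}_\Mvarcal\leq C(\norm{\ubold}_1+\norm{\Mbold\grad\otimes\ubold}_\Mvarcal)$. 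Since $E\ubold$ is compactly supported in $B$, the first step (applied on $B$) gives $\norm{E\ubold}_{d/(d-1)}\leq C\norm{\Mbold\grad\otimes(E\ubold)}_\Mvarcal$, and because $E\ubold$ agrees with $\ubold$ on $\Omega$ and vanishes elsewhere, $\norm{\ubold}_{d/(d-1)}=\norm{E\ubold}_{d/(d-1)}$. Chaining these inequalities yields the desired estimate. I expect the zero-extension step to be the main obstacle: it hides the genuine boundary analysis (continuity of the weighted variation across $\partial\Omega$ and the suitable extension of $\Mbold$ and $\Mbold^{-1}$), which is where the Lipschitz regularity of $\Omega$ and the closedness of the weighted gradient from \cref{prop: closedness of weighted derivative} are really used; the interior approximation and the limit passage are, by contrast, routine adaptations of the unweighted arguments.
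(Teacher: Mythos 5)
Your proposal is correct and follows essentially the same route as the paper's proof: the Sobolev--Korn estimate of \cref{lem: poincare} for smooth compactly supported fields, smooth approximation via \cref{prop: 4.13 Bredies2012}, the zero-extension estimate of \cref{corBred}, and lower semicontinuity of the $\LL{d/(d-1)}$-norm to pass to the limit. The only (welcome) difference is that you make explicit the need to extend $\Mbold$ to a neighbourhood of $\overline{\Omega}$ while keeping $\Mbold^{-1}$ bounded, a point the paper leaves implicit.
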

\begin{proof}
In this proof we follow \cite[Theorem 4.16]{Bredies2012}, with the notational changes $\Mbold\grad$, $\Tcal^\ell(\RR^d)$ and $\BDVM{}{}[\Omega,\Mbold,\Tcal^\ell(\RR^d)]$ in place of the symmetrised gradient $\Ecal$, $\Sym{\ell}(\RR^d)$ and $\BD{}[\Omega,\Sym{\ell}[\RR^d]]$, respectively, and $d\geq 2$.
If $\ubold\in\BDVM{}{}[\Omega,\Mbold,\Tcal^\ell(\RR^d)]\cap \CCCcomp{1}[\Omega,\Tcal^\ell(\RR^d)]$ then \cref{lem: poincare} gives the result.
In the general case $\ubold\in\BDVM{}{}[\Omega,\Mbold,\Tcal^\ell(\RR^d)]$, its zero extension $E\ubold$ can be approximated by a sequence of strictly converging continuously differentiable, compactly supported functions $(\ubold_j)_{j\in\NN}$, by applying \cref{prop: 4.13 Bredies2012} to a bounded domain $\Omega'$ such that $\overline{\Omega}\subset\subset\Omega'$.
According to the estimate in 
\cref{lem: poincare} we have for each $j$
\[
\norm{\ubold_j}_{d/(d-1)} 
\leq 
C\norm{\Mbold \grad \otimes \ubold_j}_1
=
C\norm{\Mbold \grad \otimes \ubold_j}_\Mvarcal
\leq
C\left( \norm{\ubold_j}_1 + \norm{\Mbold\grad\otimes \ubold_j}_\Mvarcal\right).
\]
Now $\ubold_j\to E\ubold$ in $\LL{1}[\Omega, \Tcal^\ell(\RR^d)]$, and by the lower semicontinuity of the $\LL{d/(d-1)}[\Omega, \Tcal^\ell(\RR^d)]$-norm, the strict convergence in $\BDVM{}{}$, and Corollary~\ref{corBred} we get:
\begin{equation}
\begin{aligned}
\norm{\ubold}_{d/(d-1)} = 
\norm{E\ubold}_{d/(d-1)}
&\leq
C(\norm{\ubold}_1 + \norm{\Mbold\grad \otimes \ubold}_\Mvarcal).
\end{aligned}
\label{th: bredies 4.16 estimate}
\end{equation}
\end{proof}

Now, we show that the embedding in \cref{th: bredies 4.16} is compact for $1 \leq p < d/(d-1)$, similarly to \cite[Theorem 4.17]{Bredies2012}.
\begin{theorem}\label{th: 4.17 bredies}
Let $\Omega$ be a bounded Lipschitz domain, $1\leq p < d/(d-1)$ and $(\ubold_j)_{j\in\NN}$ be a bounded sequence in $\BDVM{}{}[\Omega,\Mbold,\Tcal^\ell(\RR^d)]$. 
Then, a subsequence $(\ubold_{j_\ell})_{\ell\in\NN}$ converges in $\LL{p}[\Omega,\Tcal^\ell(\RR^d)]$.\end{theorem}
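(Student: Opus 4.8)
The plan is to follow the classical Rellich--Kondrachov strategy for $\BV$-type spaces, as in \cite[Theorem 4.17]{Bredies2012}, exploiting that $\Mbold$ is smooth and uniformly positive definite on the bounded domain so that the weighted gradient controls the ordinary one. First I would record that, by \cref{th: bredies 4.16}, a bounded sequence $(\ubold_j)_{j\in\NN}$ in $\BDVM{}{}[\Omega,\Mbold,\Tcal^\ell(\RR^d)]$ is bounded in $\LL{d/(d-1)}[\Omega,\Tcal^\ell(\RR^d)]$. In particular it suffices to extract a subsequence converging in $\LL{1}[\Omega,\Tcal^\ell(\RR^d)]$: the conclusion for $1\le p<d/(d-1)$ then follows from the interpolation inequality $\norm{\vbold}_p\le\norm{\vbold}_1^{1-\theta}\norm{\vbold}_{d/(d-1)}^{\theta}$, with $\tfrac1p=(1-\theta)+\theta\tfrac{d-1}{d}$, applied to the differences $\vbold=\ubold_{j_k}-\ubold_{j_m}$, whose $\LL{d/(d-1)}$-norms stay bounded while their $\LL{1}$-norms tend to $0$.

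To obtain $\LL{1}$-precompactness I would invoke the Fr\'echet--Kolmogorov criterion. Fix a bounded Lipschitz domain $\Omega'$ with $\overline{\Omega}\subset\subset\Omega'$ and extend each $\ubold_j$ by zero to $E\ubold_j\in\BDVM{}{}[\Omega',\Mbold,\Tcal^\ell(\RR^d)]$; by \cref{corBred} the quantities $\norm{E\ubold_j}_1+\norm{\Mbold\grad\otimes(E\ubold_j)}_\Mvarcal$ are bounded uniformly in $j$ via \cref{eq: zero extension continuity}. Since $\Mbold$ is invertible with $\sup_{\xbold}\norm{(\Mbold(\xbold))^{-1}}_2<\infty$ on $\Omega'$, the ordinary gradient measures satisfy $\norm{\grad\otimes(E\ubold_j)}_\Mvarcal\le\norm{\Mbold^{-1}}_\infty\,\norm{\Mbold\grad\otimes(E\ubold_j)}_\Mvarcal$ and are therefore uniformly bounded as well. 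Mollifying with a standard kernel $\rho_\varepsilon$ yields the uniform estimate $\norm{E\ubold_j*\rho_\varepsilon-E\ubold_j}_1\le \varepsilon\,\sup_j\norm{\grad\otimes(E\ubold_j)}_\Mvarcal$, whose right-hand side is independent of $j$; this is exactly the equicontinuity-of-translations hypothesis, and together with the uniform $\LL{1}$ bound it gives relative compactness of $(E\ubold_j)_{j\in\NN}$ in $\LL{1}[\Omega',\Tcal^\ell(\RR^d)]$, hence a subsequence $(\ubold_{j_k})_k$ Cauchy in $\LL{1}[\Omega,\Tcal^\ell(\RR^d)]$.

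Combining the two steps, the extracted subsequence converges in $\LL{1}$ and remains bounded in $\LL{d/(d-1)}$, so by the interpolation bound it is Cauchy in every $\LL{p}$ with $1\le p<d/(d-1)$ and converges there, which is the assertion. I expect the main obstacle to be the uniform mollification estimate: one must transfer control from the weighted gradient $\Mbold\grad\otimes\ubold_j$ to the ordinary gradient $\grad\otimes(E\ubold_j)$ of the zero extension, which relies crucially on the boundedness of $\Mbold^{-1}$ (guaranteed by the smoothness and uniform positive definiteness of $\Mbold$ on the enlarged domain) and on the extension estimate \cref{eq: zero extension continuity} across the Lipschitz boundary. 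The tensor-valuedness is harmless, since $\Tcal^\ell(\RR^d)$ is finite-dimensional and all estimates may be read componentwise.
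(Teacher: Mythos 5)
Your proof is correct, and it shares the paper's overall skeleton (zero extension via \cref{corBred}, $\LL{1}$-precompactness by a Riesz--Fr\'echet--Kolmogorov-type argument, then interpolation between $\LL{1}$ and $\LL{d/(d-1)}$ using the bound from \cref{th: bredies 4.16}), but the key technical step is obtained by a genuinely different and in fact simpler route. The paper follows \cite[Theorem 4.17]{Bredies2012} and proves the translation estimate $\int_{\RR^d}\abs{\ubold(\overline{\xbold}+\hbold)-\ubold(\overline{\xbold})}\diff\overline{\xbold}\leq C\abs{\hbold}^s\norm{\Mbold\grad\otimes\ubold}_1$ directly in terms of the \emph{weighted} gradient, mimicking the argument designed for the symmetrised gradient (where one cannot invert the operator and only gets $s<1$). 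You instead pass to the ordinary gradient via $\norm{\grad\otimes(E\ubold_j)}_\Mvarcal\leq C\norm{\Mbold\grad\otimes(E\ubold_j)}_\Mvarcal$ and then use the classical BV mollification/compactness machinery; this exploits the invertibility of $\Mbold$, which is precisely what distinguishes this elliptic anisotropic setting from the $\mathrm{BD}$ setting of \cite{Bredies2012}, and it yields the cleaner exponent $s=1$. Two small points deserve care: the duality argument behind the measure inequality should be spelled out (test $\grad\otimes E\ubold_j$ against $\Psibold$ and substitute $\Phibold$ with $\tr(\Mbold\otimes\Phibold^\sim)=\Psibold$, using \cref{lem: move M} and $\sup_\xbold\norm{(\Mbold(\xbold))^{-1}}_2<\infty$), and the uniform ellipticity of $\Mbold$ must persist on the enlarged domain $\Omega'$ where the extension $E\ubold_j$ lives --- an assumption implicit in \cref{corBred} and consistent with the paper's standing hypotheses, so neither point is a gap.
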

\begin{proof}
We aim to prove the compact embedding $\BDVM{}{}[\Omega,\Mbold,\Tcal^\ell(\RR^d)]\hookrightarrow\LL{1}[\Omega,\Tcal^\ell(\RR^d)]$, i.e.\ by fixing $\Omega'$ such that $\overline{\Omega}\subset\subset\Omega'$ and $\ubold\in\CCCcomp{2}[\RR^d,\Tcal^\ell(\RR^d)]$ with support in $\Omega'$, then
\[
\int_{\RR^d}\abs{\ubold(\overline{\xbold}+\hbold) - \ubold(\overline{\xbold})}\diff{\overline{\xbold}}
\leq 
C
\abs{\hbold}^s\norm{\Mbold\grad\otimes \ubold}_1
\]
for some $s>0$ and all $\hbold\in\RR^d$, $\abs{\hbold}\leq 1$ with a constant $C$ independent of $\ubold$. This part follows by the same argument as in the first part of the proof of \cite[Theorem 4.17]{Bredies2012}.

Let $\ubold\in\BDVM{}{}[\Omega,\Mbold,\Tcal]$ be arbitrary. 
The zero extension $E\ubold\in\BDVM{}{}[\Omega',\Mbold,\Tcal^\ell(\RR^d)]$ has compact support in $\Omega'$ and thus there exists a smooth sequence $(\ubold_j)_{j\in\NN}$ in $\CCCcomp{\infty}[\Omega,\Tcal^\ell(\RR^d)]$ such that $\ubold_j\to E \ubold$ in $\LL{1}[\Omega,\Tcal^\ell(\RR^d)]$ and $\norm{\Mbold\grad\otimes \ubold_j}_1\to \norm{\Mbold\grad\otimes E\ubold}_\Mvarcal$ as $j\to\infty$. Thus:
\[
\int_{\RR^d} \abs{E\ubold(\overline{\xbold}+\hbold) - E \ubold(\overline{\xbold})}\diff \overline{\xbold} \leq C \abs{\hbold}^s \norm{\Mbold\grad\otimes E\ubold}_\Mvarcal \leq C \abs{\hbold}^s \left(\norm{\ubold}_1 + \norm{\Mbold\grad\otimes \ubold}_\Mvarcal\right).
\]
For a bounded sequence in $\BDVM{}{}[\Omega,\Mbold,\Tcal^\ell(\RR^d)]$ we have $(E\ubold_j)_{j\in\NN}$ relatively compact, thus there exist $\ubold\in\LL{1}[\RR^d,\Tcal^\ell(\RR^d)]$ and a subsequence $(E\ubold_{j_\ell})_{\ell\in\NN}$ with $E \ubold_{j_\ell}\to \ubold$.
Also, $\ubold_{j_\ell}\to \ubold_{\at\Omega}$ in $\LL{1}[\Omega,\Tcal^\ell(\RR^d)]$ proving the compact embedding $\BDVM{}{}[\Omega,\Mbold,\Tcal^\ell(\RR^d)]$ in $\LL{1}[\Omega,\Tcal^\ell(\RR^d)]$.

For the general case $1\leq p < d/(d-1)$, it follows from Theorem~\ref{th: bredies 4.16} that $(\ubold_j)_{j\in\NN}$ is bounded in $\LL{d/(d-1)}[\Omega,\Tcal^\ell(\RR^d)]$, and the result follows from an application of Young's inequality as in the proof of \cite[Theorem 4.17]{Bredies2012}.
\end{proof}

Every bounded sequence in $\BDVM{}{}[\Omega,\Mbold,\Tcal^\ell(\RR^d)]$ admits a subsequence which converges in the weak-$\ast$ sense, while strict convergence implies  weak-$\ast$ convergence. The embeddings above allow to reinterpret weak-$\ast$ sequences in $\BDVM{}{}[\Omega,\Mbold,\Tcal^\ell(\RR^d)]$ as:
\begin{itemize}
\item  weakly converging sequences in $\LL{d/(d-1)}[\Omega,\Tcal^\ell(\RR^d)]$ (weak-$\ast$ for $d=1$);
\item strongly converging sequences in $\LL{p}[\Omega,\Tcal^\ell(\RR^d)]$ for any $p\in[1,d/(d-1)[$, continuously.
\end{itemize}
Also, $\CCCspace{\infty}[\overline{\Omega},\Tcal^\ell(\RR^d)]$ is dense in $\BDVM{}{}[\Omega,\Mbold,\Tcal^\ell(\RR^d)]$, with respect to strict convergence.

\subsection{Existence}
In what follows, we prove the coercivity for $\TDVM{\Qrm,\ell}{\alphabold}$ in view of satisfying the conditions of the Tonelli-Weierstra\ss\, theorem for the minimisation problem \cref{eq: minimization problem}.
\begin{definition}\label{def: R}
For each $\Qrm\geq 1$ and $\ell\geq 0$ let $R_{\Qrm,\ell}$ be a linear, continuous and onto projection such that
\[
R_{\Qrm,\ell} 
: 
\LL{d/(d-1)}[\Omega,\Tcal^\ell(\RR^d)]\to\ker(\Mbold_\Qrm\grad\otimes\dots\Mbold_1\grad).
\]
\end{definition}

Note that $R_{\Qrm,\ell}$ defined as above always exists since $\ker(\TDVM{\Qrm,\ell}{\alphabold})=\ker(\Mbold_\Qrm\grad\otimes\dots\Mbold_1\grad)$ is finite dimensional.

The following coercivity estimate holds, similarly to \cite[Proposition 3.11]{BreHol2014}.

\begin{proposition}\label{prop: coercivity}
For each $\Qrm\geq 1$ and $\ell\geq 0$, there exists a constant $C>0$ such that for all $\ubold\in\LL{d/(d-1)}[\Omega,\Tcal^\ell(\RR^d)]$:
\begin{small}
\[
\norm{\Mbold_1\grad \otimes \ubold}_\Mvarcal 
\leq C
\left(\norm{\ubold}_1 + \TDVM{\Qrm,\ell}{\alphabold}[\ubold,\Mcal]\right)
\quad \text{and} 
\quad
\norm{\ubold- R_{\Qrm,\ell} \ubold}_{d/(d-1)} \leq C\, \TDVM{\Qrm,\ell}{\alphabold}[\ubold,\Mcal].
\]
\end{small}
\end{proposition}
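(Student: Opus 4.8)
The plan is to argue by induction on the order $\Qrm$, proving both inequalities together; inside each inductive step I first establish the gradient bound (the first inequality) and then deduce the Sobolev--Korn bound (the second) from it. The organising simplification is that, since every $\Mbold_j$ is invertible, $\ker(\Mbold_j\grad)=\ker(\grad)$, so all the kernels $\ker(\Mbold_\Qrm\grad\otimes\dots\otimes\Mbold_1\grad)$ are merely spaces of polynomial tensor fields of degree below $\Qrm$, independent of the weighting fields. In particular lower-order kernels sit inside higher-order ones, and all the projections $R_{\Qrm,\ell}$ of \cref{def: R} have the same (polynomial) range irrespective of $\Mcal$; this is what lets the bookkeeping across orders and tensor degrees go through.

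For the base case $\Qrm=1$ the first inequality is immediate, since $\TDVM{1,\ell}{\alphabold}[\ubold,\Mcal]$ equals $\norm{\Mbold_1\grad\otimes\ubold}_\Mvarcal$ up to the factor $\alpha_0$; the second inequality is exactly the first-order Sobolev--Korn estimate \cref{eq: sobolev-korn 1}, because $R_{1,\ell}$ projects onto $\ker(\Mbold_1\grad)=\ker(\grad)$.

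For the inductive step, assume both bounds at order $\Qrm-1$ for every tensor degree. To prove the first bound at order $\Qrm$ I may assume $\TDVM{\Qrm,\ell}{\alphabold}[\ubold,\Mcal]<\infty$ and invoke the minimum representation \cref{prop: equivalence TDV} in the recursive form \cref{rem: minimal representation remark}: pick an optimal $\zbold_1\in\BDVM{}{}[\Omega,\Mbold_2,\Tcal^{\ell+1}(\RR^d)]$ with
\[
\TDVM{\Qrm,\ell}{\alphabold}[\ubold,\Mcal]=\alpha_{\Qrm-1}\norm{\Mbold_1\grad\otimes\ubold-\zbold_1}_\Mvarcal+\TDVM{\Qrm-1,\ell+1}{\alphabold_{\Qrm-2}}[\zbold_1,\Mcal_1^-].
\]
Since $\zbold_1\in\LL{1}$, this decomposition already exhibits $\Mbold_1\grad\otimes\ubold$ as a measure, so $\ubold\in\BDVM{}{}[\Omega,\Mbold_1,\Tcal^\ell(\RR^d)]$ (via \cref{th: 2.6}) and the first-order tools apply. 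The key step is then to apply \cref{lem 3.9 brehol} to $\ubold$ with the kernel element $\overline{\wbold}=R_{\Qrm-1,\ell+1}\zbold_1$, which lies in $\ker(\TDVM{\Qrm,\ell+1}{\alphabold})$ because it is a polynomial of degree below $\Qrm-1$. This yields $\norm{\Mbold_1\grad\otimes\ubold}_\Mvarcal\leq C_1(\norm{\ubold}_1+\norm{\Mbold_1\grad\otimes\ubold-R_{\Qrm-1,\ell+1}\zbold_1}_\Mvarcal)$, and I split the last term as $\norm{\Mbold_1\grad\otimes\ubold-\zbold_1}_\Mvarcal+\norm{\zbold_1-R_{\Qrm-1,\ell+1}\zbold_1}_1$. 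The first summand is $\TDVM{\Qrm,\ell}{\alphabold}[\ubold,\Mcal]/\alpha_{\Qrm-1}$; the second, after the continuous inclusion $\LL{d/(d-1)}\hookrightarrow\LL{1}$ on the bounded $\Omega$, is bounded through the inductive second estimate applied to $\zbold_1$ by a multiple of $\TDVM{\Qrm-1,\ell+1}{\alphabold_{\Qrm-2}}[\zbold_1,\Mcal_1^-]\leq\TDVM{\Qrm,\ell}{\alphabold}[\ubold,\Mcal]$. Collecting terms gives the first bound. I expect this to be the main obstacle, as it is precisely the point where the finite-dimensional kernel part of $\zbold_1$ --- which carries no $\TDVM{}{}$ weight and so cannot be controlled directly --- must be absorbed, and \cref{lem 3.9 brehol} is the device that makes this absorption possible.

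Finally, to obtain the second bound at order $\Qrm$ I first establish its $\LL{1}$ analogue $\norm{\ubold-R_{\Qrm,\ell}\ubold}_1\leq C\,\TDVM{\Qrm,\ell}{\alphabold}[\ubold,\Mcal]$ by contradiction: if it failed there would be $\ubold_n$ with $\norm{\ubold_n-R_{\Qrm,\ell}\ubold_n}_1=1$ and $\TDVM{\Qrm,\ell}{\alphabold}[\ubold_n,\Mcal]\to0$, so that $\vbold_n:=\ubold_n-R_{\Qrm,\ell}\ubold_n$ satisfies $R_{\Qrm,\ell}\vbold_n=\bm{0}$, $\norm{\vbold_n}_1=1$, $\TDVM{\Qrm,\ell}{\alphabold}[\vbold_n,\Mcal]\to0$. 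The first bound just proved makes $(\vbold_n)$ bounded in $\BDVM{}{}[\Omega,\Mbold_1,\Tcal^\ell(\RR^d)]$, so by the compact embedding \cref{th: 4.17 bredies} a subsequence converges in $\LL{1}$ to some $\vbold\in\ker(\TDVM{\Qrm,\ell}{\alphabold})$, using lower semicontinuity and the closedness of the weighted gradient \cref{prop: closedness of weighted derivative}. As $(\vbold_n)$ is also bounded in $\LL{d/(d-1)}$ by \cref{th: bredies 4.16}, weak continuity of the projection $R_{\Qrm,\ell}$ gives $R_{\Qrm,\ell}\vbold=\bm{0}$, while $\vbold\in\Im(R_{\Qrm,\ell})$ forces $R_{\Qrm,\ell}\vbold=\vbold$; hence $\vbold=\bm{0}$, contradicting $\norm{\vbold}_1=1$. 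I then upgrade to the $\LL{d/(d-1)}$-norm by applying the embedding estimate \cref{th: bredies 4.16} to $\vbold=\ubold-R_{\Qrm,\ell}\ubold$, bounding $\norm{\vbold}_1$ by the $\LL{1}$ estimate and $\norm{\Mbold_1\grad\otimes\vbold}_\Mvarcal$ by the first bound applied to $\vbold$ (noting $\TDVM{\Qrm,\ell}{\alphabold}[\vbold,\Mcal]=\TDVM{\Qrm,\ell}{\alphabold}[\ubold,\Mcal]$), which delivers the claim.
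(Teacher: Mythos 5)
Your proposal is correct and follows essentially the same route as the paper: induction on $\Qrm$, with the first bound obtained from \cref{lem 3.9 brehol} combined with the minimum representation of \cref{prop: equivalence TDV}, and the second by a compactness--contradiction argument using \cref{th: 4.17 bredies} and lower semicontinuity. The only cosmetic differences are that you work with the optimal $\zbold_1$ rather than minimising over a generic $\wbold$ at the end, and you normalise the contradiction sequence in $\LL{1}$ before upgrading to $\LL{d/(d-1)}$ via \cref{th: bredies 4.16}, whereas the paper normalises in $\LL{d/(d-1)}$ directly.
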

\begin{proof}
Exactly as in the proof of \cite[Proposition 3.11]{BreHol2014}, we proceed by induction on $\Qrm$. Let $\Qrm=1$ and $\ell\geq 0$.
Then the first inequality is trivial while the second follows from the Sobolev-Korn inequality of Lemma~\ref{eq: sobolev-korn 1}.

For the induction step, we fix $\ell\geq 0$, ${\alphabold}=(\alpha_0,\dots,\alpha_\Qrm)$ with $\alpha_i>0$, $\Omega$ and $R_{\Qrm+1,\ell}$, and we assume that both conclusions of the proposition hold for $\widetilde{\alphabold}=(\alpha_0,\dots,\alpha_{\Qrm-1})$  and  any $\ell'\in\NN$.  

We first show that the estimate for $\norm{\Mbold_1\grad\otimes \ubold}_\Mvarcal$ holds when $\ubold\in\BDVM{}{}[\Omega,\Mbold_1,\Tcal^{\ell}(\RR^d)]$ (otherwise the estimate is obvious since $\TDVM{\Qrm+1,\ell}{\alphabold}[\ubold,\Mcal]=+\infty$). Using the map $R_{\Qrm,\ell+1}$, \cref{lem 3.9 brehol}, the continuous embeddings
\[
\BDVM{}{}[\Omega,\Mbold_1,\Tcal^{\ell+1}(\RR^d)] 
\hookrightarrow
\LL{d/(d-1)}[\Omega,\Tcal^{\ell+1}(\RR^d)]
\hookrightarrow
\LL{1}[\Omega,\Tcal^{\ell+1}(\RR^d)]
\]
and the induction hypotheses, we get for $\wbold\in\BDVM{}{}[\Omega,\Mbold_1,\Tcal^{\ell+1}(\RR^d)]$ the following estimates:
\[
\begin{aligned}
\norm{\Mbold_1\grad\otimes \ubold}_\Mvarcal
&\leq
C_1 \left( \norm{\Mbold_1\grad\otimes \ubold - R_{\Qrm,\ell+1} \wbold}_\Mvarcal + \norm{\ubold}_1\right)
\\
&\leq
C_2 \left( \norm{\Mbold_1\grad\otimes \ubold - \wbold}_\Mvarcal +\norm{\wbold- R_{\Qrm,\ell+1} \wbold}_{d/(d-1)} + \norm{\ubold}_1\right)
\\
&\leq
C_3 \left( \norm{\Mbold_1\grad\otimes \ubold - \wbold}_\Mvarcal +\TDVM{\Qrm,\ell+1}{\widetilde{\alphabold}}[\wbold,\Mcal] + \norm{\ubold}_1\right)
\\
&\leq
C_4 \left( \alpha_\Qrm \norm{\Mbold_1\grad\otimes \ubold - \wbold}_\Mvarcal +\TDVM{\Qrm,\ell+1}{\widetilde{\alphabold}}[\wbold,\Mcal] + \norm{\ubold}_1\right)
\end{aligned}
\] 
for suitable $C_1,\,C_2,\,C_3,\,C_4>0$.
By taking the minimum over all $\wbold\in\BDVM{}{}[\Omega,\Mbold_1,\Tcal^\ell(\RR^d)]$  we get
\[
\norm{\Mbold_1\grad\otimes \ubold}_\Mvarcal \leq C_4\left( \norm{\ubold}_1 + \TDVM{\Qrm+1,\ell}{\alphabold}[\ubold,\Mcal]\right),
\]
via the minimum representation in \cref{rem: minimal representation remark}.

For the coercivity estimate, assume that it is not true, i.e.\ there exists $(\ubold_j)_{j\in\NN}$ such that each $\ubold_j\in\LL{d/(d-1)}[\Omega,\Tcal^\ell(\RR^d)]$ and
\[
\norm{\ubold_j - R_{\Qrm+1,\ell} \ubold_j}_{d/(d-1)} =1
\quad\text{and}\quad
\TDVM{\Qrm+1,\ell}{\alphabold}[\ubold_j,\Mcal]\leq j^{-1}.
\]
Since $\ker(\TDVM{\Qrm+1,\ell}{\alphabold}[\ubold_j,\Mcal]) =\Im(R_{\Qrm+1,\ell})$ then for each $j$ it holds
\[
\TDVM{\Qrm+1,\ell}{\alphabold}[\ubold_j- R_{\Qrm+1,\ell} \ubold_j,\Mcal] = \TDVM{\Qrm+1,\ell}{\alphabold}[\ubold_j,\Mcal].
\]
Also, since the first estimate holds, then
\[
\norm{\Mbold_1 \grad\otimes (\ubold_j-R_{\Qrm+1,\ell} \ubold_j)}_\Mvarcal 
\leq 
C_4 
\left(
\TDVM{\Qrm+1,\ell}{\alphabold}[\ubold_j,\Mcal] + \norm{\ubold_j - R_{\Qrm+1,\ell} \ubold_j}_1
\right)
\]
and $(\ubold_j - R_{\Qrm+1,\ell} \ubold_j)_{j\in\NN}$ is bounded in $\BDVM{}{}[\Omega,\Mcal,\Tcal^\ell(\RR^d)]$ by the continuous embedding.
By the compact embedding there exists a subsequence of $(\ubold_j - R_{\Qrm+1,\ell} \ubold_j)_{j\in\NN}$, not relabelled, converging to $\ubold^\ast\in\LL{1}[\Omega,\Tcal^\ell(\RR^d)]$ with $R_{\Qrm+1,\ell} \ubold^\ast = 0$ since
$R_{\Qrm+1,\ell} (\ubold_j - R_{\Qrm+1,\ell} \ubold_j)=0$  for all $j$.
Moreover, the lower semi-continuity leads to
\[
0 \leq \TDVM{\Qrm+1,\ell}{\alphabold}[\ubold^\ast,\Mcal] 
\leq 
\liminf_{j\to\infty} \TDVM{\Qrm+1,\ell}{\alphabold}[\ubold_j,\Mcal] = 0.
\]
This means that $\ubold^\ast\in\ker(\TDVM{\Qrm+1,\ell}{\alphabold})$ and
$
\Mbold_1\grad\otimes (\ubold_j - R_{\Qrm+1,\ell} \ubold_j) \to \bm{0}
$
in $\Mvarcal(\Omega,\Tcal^{\ell+1}(\RR^d))$ 
with
$(\ubold_j - R_{\Qrm+1,\ell} \ubold_j)\to \bm{0}$ in $\BDVM{}{}[\Omega,\Mbold_1,\Tcal^{\ell}(\RR^d)]$ and in $\LL{d/(d-1)}[\Omega,\Tcal^\ell(\RR^d)]$ by the continuous embedding.
This contradicts $\norm{\ubold_j - R_{\Qrm+1,\ell} \ubold_j}_{d/(d-1)}=1$ for all $j$ and the coercivity holds.
\end{proof}

The next proposition, similar to \cite[Proposition 4.1]{BreHol2014}, proves the coercivity of the minimisation problem \cref{eq: minimization problem}.

\begin{proposition}\label{prop: corecivity 4.1 brehol} 
Let $p\in [1,\infty[$ with $p \leq d/(d-1)$ and $F:\LL{p}[\Omega,\Tcal^\ell(\RR^d)]\to]-\infty,\infty]$. If $F$ is bounded from below and there exist an onto projection $R$ as in \cref{def: R} such that for each sequence $(\ubold_j)_j\in{\NN}$ with $\ubold_j\in\LL{d/(d-1)}[\Omega,\Tcal^\ell(\RR^d)]$ it holds
\[
\norm{R \ubold_j}_{d/(d-1)} \to \infty
\quad
\text{and}
\quad
\left(\norm{\ubold_j - R\ubold_j}_{d/(d-1)}\right)_{j\in\NN} 
\text{ is bounded}
\quad
\Rightarrow
\quad 
F(\ubold_j) \to \infty,
\]
then $\TDVM{\Qrm,\ell}{\alphabold} + F$ is coercive in $\LL{p}[\Omega,\Tcal^\ell(\RR^d)]$.
\end{proposition}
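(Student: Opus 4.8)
The plan is to prove coercivity by contradiction and to let the coercivity estimate already established in \cref{prop: coercivity} do the heavy lifting. Recall that coercivity of $\TDVM{\Qrm,\ell}{\alphabold}(\blank,\Mcal) + F$ in $\LL{p}[\Omega,\Tcal^\ell(\RR^d)]$ means that $\TDVM{\Qrm,\ell}{\alphabold}[\ubold_j,\Mcal] + F(\ubold_j) \to \infty$ whenever $\norm{\ubold_j}_p \to \infty$. So first I would fix a sequence $(\ubold_j)_{j\in\NN}$ in $\LL{p}[\Omega,\Tcal^\ell(\RR^d)]$ with $\norm{\ubold_j}_p \to \infty$ and assume, for contradiction, that the objective does not diverge; passing to a subsequence (not relabelled) I may then assume $\TDVM{\Qrm,\ell}{\alphabold}[\ubold_j,\Mcal] + F(\ubold_j) \leq M$ for some $M$ and all $j$. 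Writing $F \geq -c$ for the lower bound of $F$ and using $\TDVM{\Qrm,\ell}{\alphabold} \geq 0$, this immediately yields the two bounds $\TDVM{\Qrm,\ell}{\alphabold}[\ubold_j,\Mcal] \leq M + c$ and $F(\ubold_j) \leq M$.

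Next I would upgrade the integrability of the $\ubold_j$ and control their non-kernel part. Since $\ubold_j\in\LL{p}[\Omega,\Tcal^\ell(\RR^d)]\subset\LL{1}[\Omega,\Tcal^\ell(\RR^d)]$ ($\Omega$ bounded) and $\TDVM{\Qrm,\ell}{\alphabold}[\ubold_j,\Mcal] < \infty$, the first estimate of \cref{prop: coercivity} shows $\Mbold_1\grad\otimes\ubold_j\in\Mvarcal$, so $\ubold_j\in\BDVM{}{}[\Omega,\Mbold_1,\Tcal^\ell(\RR^d)]$ and hence $\ubold_j\in\LL{d/(d-1)}[\Omega,\Tcal^\ell(\RR^d)]$ by the continuous embedding of \cref{th: bredies 4.16}; in particular the hypothesis on $F$ is now applicable to $(\ubold_j)$. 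The second estimate of \cref{prop: coercivity} then gives $\norm{\ubold_j - R_{\Qrm,\ell}\ubold_j}_{d/(d-1)} \leq C\,\TDVM{\Qrm,\ell}{\alphabold}[\ubold_j,\Mcal] \leq C(M+c)$, so $(\ubold_j - R_{\Qrm,\ell}\ubold_j)_{j}$ is bounded in $\LL{d/(d-1)}[\Omega,\Tcal^\ell(\RR^d)]$. Should the projection $R$ in the statement differ from $R_{\Qrm,\ell}$, I would note that both restrict to the identity on the finite-dimensional kernel $\ker(\Mbold_\Qrm\grad\otimes\dots\otimes\Mbold_1\grad)$, so that $R_{\Qrm,\ell}\ubold_j - R\ubold_j = -R(\ubold_j - R_{\Qrm,\ell}\ubold_j)$, and continuity of $R$ transfers the bound to $\norm{\ubold_j - R\ubold_j}_{d/(d-1)}$.

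Finally I would transport everything to $\LL{p}$ and exploit finite dimensionality. Because $p \leq d/(d-1)$ and $\Omega$ is bounded, $\LL{d/(d-1)}[\Omega,\Tcal^\ell(\RR^d)]$ embeds continuously into $\LL{p}[\Omega,\Tcal^\ell(\RR^d)]$, whence $(\norm{\ubold_j - R\ubold_j}_p)_j$ is bounded; combined with $\norm{\ubold_j}_p\to\infty$ and the triangle inequality $\norm{R\ubold_j}_p \geq \norm{\ubold_j}_p - \norm{\ubold_j - R\ubold_j}_p$ this forces $\norm{R\ubold_j}_p \to \infty$. Since each $R\ubold_j$ lies in the finite-dimensional kernel, all norms are equivalent there and $\norm{R\ubold_j}_{d/(d-1)} \to \infty$ as well. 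The sequence $(\ubold_j)$ thus satisfies exactly the premise of the hypothesis on $F$, so $F(\ubold_j)\to\infty$, contradicting $F(\ubold_j)\leq M$; this contradiction establishes coercivity. I expect the only genuinely delicate point to be the bookkeeping in this last paragraph, namely arguing that the divergence of $\norm{\ubold_j}_p$ is absorbed entirely into the kernel component $R\ubold_j$ and then re-expressed in the $\LL{d/(d-1)}$ norm through equivalence of norms on the finite-dimensional kernel, whereas the steps relying on \cref{prop: coercivity} and \cref{th: bredies 4.16} are routine.
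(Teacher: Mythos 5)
Your proposal is correct and follows essentially the same route as the paper: both bound $F(\ubold_j)$ and $\TDVM{\Qrm,\ell}{\alphabold}[\ubold_j,\Mcal]$ separately using the lower bound on $F$, invoke the second estimate of \cref{prop: coercivity} together with \cref{th: bredies 4.16} to control $\ubold_j - R\ubold_j$ in $\LL{d/(d-1)}$, and then use the hypothesis on $F$ to rule out divergence of the kernel component $R\ubold_j$. The paper states this as ``bounded objective implies bounded sequence'' while you phrase it as a contradiction, and you add the (welcome, though minor) extra care about passing between the $\LL{p}$ and $\LL{d/(d-1)}$ norms of $R\ubold_j$ via equivalence of norms on the finite-dimensional kernel; these are presentational differences only.
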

\begin{proof}
Let $(\ubold_j)_{j\in\NN}$ be a sequence such that each $\ubold_j\in\LL{p}[\Omega,\Tcal^\ell(\RR^d)]$ and if $(F(\ubold_j)+\TDVM{\Qrm,\ell}{\alphabold}[\ubold_j,\Mcal])_{j\in\NN}$ is bounded then $(\ubold_j)_{j\in\NN}$ is bounded. 
Since $F$ is bounded from below by assumption, then the sequences $(F(\ubold_j))_{j\in\NN}$ and $(\TDVM{\Qrm,\ell}{\alphabold}[\ubold_j,\Mcal])_{j\in\NN}$ are bounded too.
Thus, the boundedness of $(\TDVM{\Qrm,\ell}{\alphabold}[\ubold_j,\Mcal])_{j\in\NN}$ implies that each $\ubold_j\in\LL{d/(d-1)}[\Omega,\Tcal^\ell(\RR^d)]$ by the continuous embedding in \cref{th: bredies 4.16}.
Now, let $R$ be a projection map as in \cref{def: R} such that the hypotheses holds. Thus, there exists a constant $C>0$ such that:
\[
\norm{\ubold_j-R \ubold_j}_{d/(d-1)}
\leq 
C\TDVM{\Qrm,\ell}{\alphabold}[\ubold_j,\Mcal],\quad\text{for all  }j\in\NN
\]
and the sequence $(\norm{\ubold_j- R \ubold_j}_{d/(d-1)})_{j\in\NN}$ is bounded. Note that $(\norm{R\ubold_j}_{d/(d-1)})_{j\in\NN}$ is bounded too otherwise $(F(\ubold_j))_{j\in\NN}$ results unbounded and contradicts the hypothesis.
From the continuous embedding of Lebesgue spaces, then $(\ubold_j)_{j\in\NN}$ is bounded in $\LL{p}[\Omega,\Tcal^\ell(\RR^d)]$.
\end{proof}

We are now ready to prove the following existence theorem, similarly to \cite[Theorem 4.2]{Bredies2012}:
\begin{theorem}\label{th: existence 4.2 brehol}
Let $p\in[1,\infty[$ with $p\leq d/(d-1)$ and assume that $F:\LL{p}[\Omega,\Tcal^\ell(\RR^d)]\to]-\infty,\infty]$ is proper, convex, lower semi-continuous and coercive as in \cref{prop: corecivity 4.1 brehol}.
Then there exists a solution to the problem
\begin{equation}
\min_{
\ubold\in\LL{p}[\Omega,\Tcal^\ell(\RR^d)]
} 
\TDVM{\Qrm,\ell}{\alphabold}[\ubold,\Mcal] + F(\ubold).
\label{eq: existence 4.2 brehol}
\end{equation}
Furthermore, if $\ubold\in\BDVM{}{}[\Omega,\Tcal^\ell(\RR^d)]$ is such that $F(\ubold)<\infty$ then the minimum is finite.
\end{theorem}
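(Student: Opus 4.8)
The plan is to run the direct method of the calculus of variations (Tonelli--Weierstra\ss), for which the functional $\TDVM{\Qrm,\ell}{\alphabold}[\blank,\Mcal]+F$ has already been prepared: it is bounded below, coercive, and lower semi-continuous. First I would set $m=\inf_{\ubold}\big(\TDVM{\Qrm,\ell}{\alphabold}[\ubold,\Mcal]+F(\ubold)\big)$ and observe that $m>-\infty$, since $\TDVM{\Qrm,\ell}{\alphabold}\geq 0$ by definition and $F$ is bounded from below. If $m=+\infty$ the statement is vacuous, so I assume $m<+\infty$ and choose a minimizing sequence $(\ubold_j)_{j\in\NN}\subset\LL{p}[\Omega,\Tcal^\ell(\RR^d)]$ with $\TDVM{\Qrm,\ell}{\alphabold}[\ubold_j,\Mcal]+F(\ubold_j)\to m$, so that the total energies are eventually bounded.

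Next I would extract a convergent subsequence. Because $F$ is bounded from below, boundedness of the total energy forces both $(F(\ubold_j))_j$ and $(\TDVM{\Qrm,\ell}{\alphabold}[\ubold_j,\Mcal])_j$ to be bounded. Coercivity of $\TDVM{\Qrm,\ell}{\alphabold}+F$, which holds by \cref{prop: corecivity 4.1 brehol} under the stated assumption on $F$, then yields that $(\ubold_j)_j$ is bounded in $\LL{p}$, hence in $\LL{1}$ since $\Omega$ is bounded. The first coercivity estimate of \cref{prop: coercivity}, $\norm{\Mbold_1\grad\otimes\ubold_j}_\Mvarcal\leq C(\norm{\ubold_j}_1+\TDVM{\Qrm,\ell}{\alphabold}[\ubold_j,\Mcal])$, shows that $(\ubold_j)_j$ is then bounded in $\BDVM{}{}[\Omega,\Mbold_1,\Tcal^\ell(\RR^d)]$. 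The compact embedding of \cref{th: 4.17 bredies} (together with interpolation against the continuous embedding of \cref{th: bredies 4.16}) gives a subsequence, not relabelled, converging strongly in $\LL{q}[\Omega,\Tcal^\ell(\RR^d)]$ for every $q\in[1,d/(d-1)[$ to some $\ubold^\ast$; in the endpoint case $p=d/(d-1)$, \cref{th: bredies 4.16} additionally provides weak convergence $\ubold_j\weakconv\ubold^\ast$ in $\LL{d/(d-1)}$, so that $\ubold^\ast\in\LL{p}$ in all cases.

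Finally I would pass to the limit. Since $\ubold_j\to\ubold^\ast$ strongly in $\LL{1}$, the lower semi-continuity of $\TDVM{\Qrm,\ell}{\alphabold}$ from \cref{prop: convexity-lsc TDV} gives $\TDVM{\Qrm,\ell}{\alphabold}[\ubold^\ast,\Mcal]\leq\liminf_j\TDVM{\Qrm,\ell}{\alphabold}[\ubold_j,\Mcal]$. As $F$ is convex and lower semi-continuous on $\LL{p}$, it is sequentially weakly lower semi-continuous there, so $F(\ubold^\ast)\leq\liminf_j F(\ubold_j)$ (using strong, hence weak, $\LL{p}$ convergence when $p<d/(d-1)$ and the weak $\LL{d/(d-1)}$ convergence when $p=d/(d-1)$). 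Adding the two inequalities and invoking superadditivity of $\liminf$,
\[
\TDVM{\Qrm,\ell}{\alphabold}[\ubold^\ast,\Mcal]+F(\ubold^\ast)
\leq
\liminf_{j\to\infty}\big(\TDVM{\Qrm,\ell}{\alphabold}[\ubold_j,\Mcal]+F(\ubold_j)\big)
=m,
\]
so $\ubold^\ast$ attains the minimum. The last assertion is then immediate: if $\ubold\in\BDVM{}{}[\Omega,\Tcal^\ell(\RR^d)]$ with $F(\ubold)<\infty$, then $\TDVM{\Qrm,\ell}{\alphabold}[\ubold,\Mcal]<\infty$ and thus $m\leq\TDVM{\Qrm,\ell}{\alphabold}[\ubold,\Mcal]+F(\ubold)<\infty$, so the attained minimum is finite. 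The main obstacle to handle carefully is the endpoint $p=d/(d-1)$, where only weak compactness is available: there one must verify that the strong $\LL{1}$ limit and the weak $\LL{d/(d-1)}$ limit coincide, and that the two lower semi-continuities are each applied in the correct topology.
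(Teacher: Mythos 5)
Your proposal is correct and follows essentially the same route as the paper: the direct method, using the coercivity of \cref{prop: corecivity 4.1 brehol} to bound a minimising sequence and lower semi-continuity to pass to the limit. The only (harmless) difference is that you additionally invoke the first estimate of \cref{prop: coercivity} and the compact embedding of \cref{th: 4.17 bredies} to get strong $\LL{1}$ convergence for the $\TDVM{\Qrm,\ell}{\alphabold}$ term, whereas the paper simply extracts a weakly convergent subsequence in $\LL{p'}$ and applies weak lower semi-continuity of the whole convex, lower semi-continuous functional $G=F+\TDVM{\Qrm,\ell}{\alphabold}$ at once.
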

\begin{proof}
We note immediately that the regulariser $\TDVM{\Qrm,\ell}{\alphabold}[\ubold,\Mcal]$ is finite if and only if $\ubold\in\BDVM{}{}[\Omega,\Mcal,\Tcal^\ell(\RR^d)]$, otherwise it is trivial to prove that a minimiser exists and the minimum is equal to $+\infty$.
Thus, assume $F(\ubold)<\infty$ for some $\ubold\in\BDVM{}{}[\Omega,\Mcal,\Tcal^\ell(\RR^d)]$ and consider a minimising sequence $(\ubold_j)_{j\in\NN}$ for $G = F + \TDVM{\Qrm,\ell}{\alphabold}$. Note that such sequence exists since $G$ is bounded from below.
Now, applying the coercivity result in \cref{prop: corecivity 4.1 brehol} for a $p'\in[p,d/(d-1)]$ and $p'>1$, then there exists a subsequence of $(\ubold_j)$, weakly convergent to  $\ubold^\ast\in\LL{p}[\Omega,\Tcal^\ell(\RR^d)]$.
Moreover, since $G$ is convex and lower semi-continuous, we get that $\ubold^\ast$ is a minimiser by weak lower semi-continuity and by assuming that $G$ is proper, the minimum is finite.
\end{proof}

From \cref{th: existence 4.2 brehol}, we can conclude as in \cite[Corollary 4.3]{BreHol2014} that there exists a solution for the minimisation problem \cref{eq: existence 4.2 brehol} in the context of inverse problems, i.e.\ when the fidelity term $F(\ubold)$ is defined from a forward operator $\Scal : \LL{p}[\Omega,\Tcal^\ell(\RR^d)]\to Y$, linear and continuous in a normed space $Y$, and the observed data $\ubold^\diamond\in Y$ as:
\[
F(\ubold) = \frac{1}{q}\norm{\Scal \ubold - \ubold^\diamond}_Y^q,\quad\text{for }q\in[1,\infty[.
\]
Of course, for a strictly convex norm $\norm{\blank}_Y$ the uniqueness of the solution depends on the injectivity of $\Scal$: in general, uniqueness does not hold since $\TDVM{\Qrm,\ell}{\alphabold}$ is not strictly convex. 
\section{Conclusions}
In this work, we have introduced and analysed the \emph{total directional variation} of arbitrary order, providing a precise framework to extend the notions of \emph{total generalized variation} \cite{BreKunPoc2010} and \emph{directional total variation} \cite{directionaltv}. In particular, we have proven a representation formula for the total directional variation of arbitrary order, which is a key for the design of a primal-dual algorithm which can be used in many imaging applications, see \cite{ParMasSch18applied}. 
\section*{Acknowledgements}
The authors are grateful to 
Prof.\ Jan Lellman, University of L\"{u}beck, (Germany) and Dr.\ Martin Holler, University of Graz (Austria) for the useful discussions. 

\bibliographystyle{siamplain}
\bibliography{biblio}

\end{document}